\providecommand{\U}[1]{\protect\rule{.1in}{.1in}}
\newtheorem{theorem}{Theorem}[section]
\newtheorem{proposition}[theorem]{Proposition}
\newtheorem{lemma}[theorem]{Lemma}
\newtheorem{remark}[theorem]{Remark}
\newcommand{\remove}[1]{ }
\def\be{\begin{equation}}
\def\ee{\end{equation}}
\def\ba{\begin{eqnarray}}
\def\ea{\end{eqnarray}}
\def\r{\mathbb{R}}
\def\<{\langle}
\def\>{\rangle}
\def\2{L^2}
\numberwithin{equation}{section}
\begin{document}
\title[Neumann controllability of the KdV equation]{ Neumann boundary controllability of the Korteweg-de Vries equation on a bounded domain}
\author[Caicedo]{Miguel Caicedo}
\address{Department of Mathematical Sciences, University of Cincinnati,
2815 Commons Way -French Hall West
Cincinnati, Ohio - 45221 United States}
\email{caicedms@mail.uc.edu}
\author[Capistrano--Filho]{Roberto  A. Capistrano--Filho}
\address{Department of Mathematical Sciences, University of Cincinnati,
2815 Commons Way -French Hall West
Cincinnati, Ohio - 45221 United States}
\email{capistranofilho@gmail.com}
\author[Zhang]{Bing-Yu Zhang}
\address{Department of Mathematical Sciences,
         University of Cincinnati,
          Cincinnati , Ohio 45221-0025 and Yangtze Center of mathematics, Sichuan University, Chengdu, China}
\email{zhangb@ucmail.uc.edu}
\subjclass[2010]{Primary: 35Q53, Secondary: 37K10, 93B05, 93D15}
\keywords{ Korteweg-de Vries equation, exact boundary controllability, Neumann boundary conditions, Dirichlet boundary conditions, critical set}
\date{}

\begin{abstract}
In this paper we study boundary controllability of the Korteweg-de Vries (KdV) equation posed on a finite domain $(0,L)$  with the Neumann boundary conditions:
\begin{equation}\label{x-1}
\left\{
\begin{array}
[c]{lll}%
u_t+u_x+uu_x+u_{xxx}=0 &  & \text{ in } (0,L)\times(0,T),\\
u_{xx}(0,t)=0,\text{ }u_x(L,t)=h(t), \text{ }u_{xx}(L,t)=0 &  & \text{ in }(0,T),\\
u(x,0)=u_0(x) & & \text{ in }(0,L).
\end{array}
\right.
\end{equation}
We show that the associated linearized system
\begin{equation}\label{x-2}
\left\{
\begin{array}
[c]{lll}%
u_t+(1+\beta) u_x+u_{xxx}=0 &  & \text{ in } (0,L)\times(0,T),\\
u_{xx}(0,t)=0,\text{ }u_x(L,t)=h(t), \text{ }u_{xx}(L,t)=0 &  & \text{ in }(0,T),\\
u(x,0)=u_0(x) & & \text{ in }(0,L),
\end{array}
\right.
\end{equation}
is exactly controllable if and only if  $\beta \ne -1$  and the length $L$ of the spatial domain $(0,L)$ does not belong to  set
\begin{equation*}
\mathcal{R}_{\beta} :=\left\{  \frac{2\pi}{\sqrt{3(1+\beta)}}\sqrt{k^{2}+kl+l^{2}}
\,:k,\,l\,\in\mathbb{N}^{\ast}\right\}\cup\left\{\frac{k\pi}{\sqrt{1+\beta}}:k\in\mathbb{N}^{\ast}\right\}.
\end{equation*}
Then the nonlinear system (\ref{x-1}) is  shown to be locally exactly controllable around a constant steady state $\beta$ if the associated  linear system is exactly controllable.
\end{abstract}
\maketitle

\section{Introduction\label{Sec0}}
In this paper we study a class of distributed parameter control system described by the Korteweg-de Vries (KdV) equation posed on a bounded interval $(0,L) $ with  the Neumann boundary conditions:
\begin{equation}
\left\{
\begin{array}
[c]{lll}%
u_t+u_x+uu_x+u_{xxx}=0 &  & \text{ in } (0,L)\times(0,T),\\
u_{xx}(0,t)=0,\text{ }u_x(L,t)=h(t),\text{ }u_{xx}(L,t)=0 &  & \text{ in }(0,T),\\
u(x,0)=u_0(x) & & \text{ in }(0,L),
\end{array}
\right.  \label{1}
\end{equation}
where the boundary value function $h=h(t)$  will be considered as a  control input. We are mainly concerned with its exact  control problem:
\vglue 0.2 cm
 {\em  Given $T>0$ and $u_0,u_T\in L^2(0,L)$, can one find  an appropriate control input $h$  such that the corresponding solution $u$ of \eqref{1} satisfies
$$u(x,0)=u_0(x), \text{  } \text{  } \text{  } \text{  }u(x,T)=u_T(x)?$$}

The study of  control and stabilization of the  KdV equation  begun with the works  of Russell \cite{russell3}, Zhang \cite{zhang4},  Russell and Zhang \cite{russell2,Russell1}   in which they studied   internal control of the
 KdV equation posed on a finite domain  $(0,L)$  with periodic boundary conditions.  Aided by then newly discovered Bourgain smoothing properties  \cite{bourgain1,bourgain2}
 they showed that the  internal control system is locally exactly controllable and exponentially stabilizable\footnote{The system has been shown  recently to be globally
 exponentially stabilizable and large time  exactly controllable by Laurent, Rosier and Zhang \cite{crz}.}.  Since then, control and stabilization  of the KdV equation have been
  intensively studied (see \cite{cerpa,cerpatut,cerpa1,coron,GG,GG1,Rosier,Rosier2,RZsurvey,zhang2} and references therein).
  In particular, Rosier \cite{Rosier}  studied boundary control of the KdV equation posed on the finite domain $(0,L)$ with the Dirichlet boundary conditions:
\begin{equation}
\left\{
\begin{array}
[c]{lll}%
u_t+u_x+uu_x+u_{xxx}=0 &  & \text{ in } (0,L)\times(0,T),\\
u(0,t)=0,\text{ }u(L,t)=0,\text{ }u_x(L,t)=g(t) &  & \text{ in }(0,T),\\
u(x,0)=u_0(x) & & \text{ in }(0,L),
\end{array}
\right.  \label{2}
\end{equation}
where boundary value function $g(t)$ is considered as a control input. Rosier  considered  first the associated linear system
\begin{equation}
\left\{
\begin{array}
[c]{lll}%
u_t+u_x+u_{xxx}=0 &  & \text{ in } (0,L)\times(0,T),\\
u(0,t)=0,\text{ }u(L,t)=0,\text{ }u_x(L,t)=g(t) &  & \text{ in }(0,T),\\
u(x,0)=u_0(x) & & \text{ in }(0,L)
\end{array}
\right.  \label{2a}
\end{equation}
and discovered the so-called {\em critical length} phenomena; whether the system (\ref{2a}) is exactly controllable depends on the length $L$ of the spatial domain $(0,L)$.

\smallskip
 \noindent
 {\bf Theorem A} (Rosier \cite{Rosier})  {\em The linear system  \eqref{2a}  is exactly controllable in the space  $L^2(0,L)$ if and only if the length $L$  of the spatial domain $(0,L)$ does not belong to the set}
\begin{equation}
\mathcal{N}:=\left\{  \frac{2\pi}{\sqrt{3}}\sqrt{k^{2}+kl+l^{2}}
\,:k,\,l\,\in\mathbb{N}^{\ast}\right\}  . \label{critical}
\end{equation}

\smallskip
  The
controllability result  of the linear system was then extended to the nonlinear system when $L\notin\mathcal{N}$.
\vglue 0.2 cm
\noindent
{\bf Theorem B }(Rosier \cite{Rosier}): \textit{Let $T>0$ be given and assume $L\notin\mathcal{N}$. There exists $\delta>0$ such for any $u_0,u_T\in L^2(0,L)$ with $$||u_0||_{L^2(0,L)}+||u_T||_{L^2(0,L)}\leq\delta,$$ one can  find a  control input $g\in L^2(0,T)$ such that the  nonlinear system \eqref{2} admits a unique solution $$u\in C([0,T];L^2(0,L))\cap L^2(0,T;H^1(0,L))$$
satisfying $$u(x,0)=u_0(x), \text{  } \text{  } \text{  } \text{  }u(x,T)=u_T(x).$$}
\vglue 0.2 cm
In the case of $L\in\mathcal{N}$, Rosier proved  in \cite{Rosier} that the associated linear system \eqref{2a} is not controllable; there exists a finite-dimensional subspace of $L^2(0,L)$, denoted by $\mathcal{M}=\mathcal{M}(L)$, which is unreachable from $0$ for the linear system.  More precisely, for every nonzero state $\psi\in\mathcal{M}$, $g\in L^2(0,T)$  and $u\in C([0,T];L^2(0,L))\cap L^2(0,T;H^1(0,L))$ satisfying \eqref{2a} and $u(\cdot,0)=0$, one has $u(\cdot,T)\neq\psi$.
A spatial domain $(0,L)$ is called \textit{critical}  for the system (\ref{2a}) if its domain length $L\in\mathcal{N}$.

When the spacial domain $(0,L)$ is critical, one usually would not expect the corresponding nonlinear system \eqref{2} to be exactly controllable  as the linear system \eqref{2a} is not.
It  thus came as a surprise when Coron and Cr\'epeau showed  in \cite{coron} that the nonlinear system (\ref{2}) is  still locally exactly controllable  even though its spatial domain is critical with its length   $L=2k\pi$ and  $k\in\mathbb{N}^{*}$ satisfying
$$\nexists(m,n)\in\mathbb{N}^{*}\times\mathbb{N}^{*} \text{}\text{ with }\text{} m^2+mn+n^2=3k^2 \text{}\text{ and }\text{} m\neq n.$$
For those values of $L$, the  unreachable space $\mathcal{M}$ of the associated linear system  is an  one-dimensional linear space generated by the function $1-cos(x)$.
As for the other types of the critical domains,   the nonlinear system (\ref{2})  was shown later  by Cerpa \cite{cerpa}, and  Cerpa and Cr\'epeau in \cite{cerpa1} to be locally, large time exactly controllable.
\vglue 0.2 cm
\noindent
{\bf Theorem C }(Cr\'epeau and Cerpa \cite{cerpa,cerpa1}): \textit{Let  $L\in\mathcal{N}$  be given.  There exists  a $T_L >0$ such that for any $T>T_L$ there exists $\delta>0$ such for any $u_0,\ u_T\in L^2(0,L)$ with $$||u_0||_{L^2(0,L)}+||u_T||_{L^2(0,L)}\leq\delta,$$ there exists $g\in L^2(0,T)$ such that the system \eqref{2} admits a unique solution $$u\in C([0,T];L^2(0,L))\cap L^2(0,T;H^1(0,L))$$
satisfying $$u(x,0)=u_0(x), \text{  } \text{  } \text{  } \text{  }u(x,T)=u_T(x).$$}

\vglue 0.2 cm
In this paper, we  will investigate the boundary control system (\ref{1})  of  the KdV equation posed on the finite domain $(0,L)$ with the Neumann boundary conditions to see if it possesses  similar control properties as that possessed by  the  boundary control system (\ref{2}) .
 First we will study the following linearized system associated to (\ref{1}),
\begin{equation}
\left\{
\begin{array}
[c]{lll}%
u_t+ (1+\beta )u_x+u_{xxx}=0 &  & \text{ in } (0,L)\times(0,T),\\
u_{xx}(0,t)=0,\text{ }u_x(L,t)=h(t),\text{ }u_{xx}(L,t)=0 &  & \text{ in }(0,T),\\
u(x,0)=u_0(x) & & \text{ in }(0,L),
\end{array}
\right.  \label{linear}
\end{equation}
where $\beta $ a given real constant. For any $\beta \ne -1$,  we define
\begin{equation}
\mathcal{R}_{\beta} :=\left\{  \frac{2\pi}{\sqrt{3(1+\beta)}}\sqrt{k^{2}+kl+l^{2}}
\,:k,\,l\,\in\mathbb{N}^{\ast}\right\}\cup\left\{\frac{k\pi}{\sqrt{\beta +1}}:k\in\mathbb{N}^{\ast}\right\}.
\label{critical_new}
\end{equation}
The following theorem is one of the  main results in this  paper.
\begin{theorem} \label{th1.1} \quad

\begin{itemize}
\item[(i)] If $\beta \ne -1$, the linear system (\ref{linear}) is exactly controllable in the space $L^2 (0,L)$  if and only if the length L of the spatial domain $(0, L)$ does not belong to the set $\mathcal{R}_{\beta}$.
\item[(ii)] If $\beta =-1$, then the system (\ref{linear}) is not  exact controllable in the space $L^2 (0,L)$ for any $L>0$.
\end{itemize}
\end{theorem}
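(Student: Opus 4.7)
The plan is to apply the Hilbert Uniqueness Method (HUM), reducing exact controllability of \eqref{linear} to an observability inequality for the backward-in-time adjoint system, following the program of Rosier \cite{Rosier} adapted to the Neumann-type boundary conditions. Pairing the equation in \eqref{linear} against a test function $v$ and integrating by parts singles out $v_t+(1+\beta)v_x+v_{xxx}=0$ as the adjoint equation, together with the boundary conditions
\begin{equation*}
v_x(0,t)=0,\qquad v_{xx}(0,t)+(1+\beta)v(0,t)=0,\qquad v_{xx}(L,t)+(1+\beta)v(L,t)=0,
\end{equation*}
the only boundary term surviving the duality pairing being $-\int_0^T h(t)v_x(L,t)\,dt$. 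Exact $L^2$-controllability of \eqref{linear} is therefore equivalent to the observability estimate
\begin{equation*}
\|v(\cdot,0)\|_{L^2(0,L)}^2 \le C\int_0^T |v_x(L,t)|^2\,dt
\end{equation*}
for every solution $v$ of the adjoint system (the hidden regularity $v_x(L,\cdot)\in L^2(0,T)$ being part of the well-posedness theory).

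When $\beta\ne -1$ and $L\notin\mathcal R_\beta$, I would prove this observability estimate by combining a classical multiplier identity (multiplying the adjoint equation by $xv$ and by $v$) with a compactness-uniqueness argument to absorb the lower-order terms. This reduces observability to the unique continuation statement that any solution of the adjoint system with $v_x(L,t)\equiv 0$ on $(0,T)$ must vanish identically; by a Fourier-in-time decomposition (or Holmgren combined with the constant-coefficient structure) this in turn reduces to the following spectral problem: show that there is no $\lambda\in\mathbb C$ and no nontrivial $\varphi\in H^3(0,L)$ with
\begin{equation*}
\varphi'''+(1+\beta)\varphi'+\lambda\varphi=0,\quad \varphi'(0)=\varphi'(L)=0,\quad \varphi''(0)+(1+\beta)\varphi(0)=0,\quad \varphi''(L)+(1+\beta)\varphi(L)=0.
\end{equation*}
The core of the theorem, and the step I expect to be the main obstacle, is proving that this overdetermined four-condition eigenvalue problem has a nontrivial solution \emph{precisely} when $L\in\mathcal R_\beta$.

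To carry this out, write $\varphi(x)=\sum_{j=1}^{3}c_je^{r_jx}$ where $r_1,r_2,r_3$ are the roots of $r^3+(1+\beta)r+\lambda=0$, and exploit Vi\`ete's relations $\sum r_j=0$, $\sum_{i<j}r_ir_j=1+\beta$, $r_1r_2r_3=-\lambda$. The four boundary conditions become a homogeneous $4\times 3$ system in $(c_1,c_2,c_3)$ whose compatibility amounts to the vanishing of four $3\times 3$ minors. I would split into $\lambda=0$ and $\lambda\ne 0$. For $\lambda=0$ the roots are $0,\pm i\sqrt{1+\beta}$; the first three boundary conditions force $\varphi(x)=\cos(\sqrt{1+\beta}\,x)$ up to a scalar, and the extra constraint $\varphi'(L)=0$ becomes $\sin(\sqrt{1+\beta}\,L)=0$, producing exactly the family $L=k\pi/\sqrt{1+\beta}$, $k\in\mathbb N^*$. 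This new component of $\mathcal R_\beta$, absent from Rosier's Dirichlet analysis, reflects the fact that $0$ is always an eigenvalue of the adjoint operator under these Neumann-type boundary conditions. For $\lambda\ne 0$ the analysis parallels Rosier's: after the rescaling $r\mapsto r/\sqrt{1+\beta}$ the compatibility condition becomes the same diophantine-exponential identity encountered in \cite{Rosier}, whose positive real solutions form the first family $\{2\pi\sqrt{k^2+kl+l^2}/\sqrt{3(1+\beta)}\}$. Conversely, whenever $L\in\mathcal R_\beta$ the explicit eigenfunction $\varphi$ provides an adjoint solution $v(x,t)=e^{\lambda t}\varphi(x)$ with $v_x(L,t)\equiv 0$, so $\varphi$ is unreachable from $0$ in $L^2(0,L)$, establishing the necessity in (i). Part (ii) is immediate from a conservation law: when $\beta=-1$ the equation becomes $u_t+u_{xxx}=0$, and integrating against $1$ while using $u_{xx}(0,t)=u_{xx}(L,t)=0$ yields $\tfrac{d}{dt}\int_0^L u(x,t)\,dx=0$ for every control $h$, so two states with distinct means cannot be connected and exact controllability fails for every $L>0$.
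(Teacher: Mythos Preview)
Your overall plan is the same as the paper's: HUM, reduction to the observability inequality $\|\psi_T\|_{L^2}\le C\|\psi_x(L,\cdot)\|_{L^2(0,T)}$ for the adjoint, a multiplier estimate plus compactness--uniqueness, and finally the overdetermined spectral problem. Two points deserve more care, and one of your arguments is actually cleaner than the paper's.

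\medskip
\textbf{The compactness step is where the Neumann conditions bite.} The multiplier identity here (the paper uses $(T-t)\psi$) produces
\[
\|\psi_T\|_{L^2}^2 \le \tfrac1T\|\psi\|_{L^2_{t,x}}^2 + \|\psi_x(L,\cdot)\|_{L^2(0,T)}^2 + (1+\beta)\|\psi(L,\cdot)\|_{L^2(0,T)}^2,
\]
i.e.\ an extra trace term $\|\psi(L,\cdot)\|_{L^2(0,T)}^2$ that is \emph{absent} in Rosier's Dirichlet case. To run compactness--uniqueness you must show that $\psi^n(L,\cdot)$ lies in a compact set of $L^2(0,T)$, and this is exactly where the hidden regularity Lemma~\ref{sharp_int} (sharp trace $\psi(L,\cdot)\in H^{1/3}(0,T)$, proved via Kato smoothing for the Cauchy problem on $\mathbb R$) is used. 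Your sentence ``hidden regularity $v_x(L,\cdot)\in L^2$ being part of the well-posedness theory'' does not cover this; the paper singles this out as the main technical novelty.

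\medskip
\textbf{The spectral reduction for $\lambda\ne 0$.} The paper does not manipulate the $4\times 3$ boundary system directly; it takes the Fourier transform of $\psi$ on $(0,L)$, uses the boundary conditions to get
\[
\widehat\psi(\xi)=-i\xi^2\,\frac{\psi(0)-\psi(L)e^{-iL\xi}}{\xi^3-(1+\beta)\xi+p},
\]
and applies Paley--Wiener. Your direct approach also works, but your claim that a mere rescaling reduces it to Rosier's identity skips a step: the Neumann boundary rows are $\bigl(r_j\bigr)$, $\bigl(r_j^2+(1+\beta)\bigr)$, etc., not Rosier's $\bigl(1\bigr)$, $\bigl(r_j\bigr)$. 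The link is the substitution $d_j=c_j/r_j$ (legitimate since $r_j\ne 0$ when $\lambda\ne 0$, using $r_j^2+(1+\beta)=-\lambda/r_j$), after which the system becomes $\sum d_j=\sum d_j r_j^2=\sum d_j e^{r_jL}=\sum d_j r_j^2 e^{r_jL}=0$ and one recovers $e^{r_1L}=e^{r_2L}=e^{r_3L}$, hence Rosier's arithmetic. Your $\lambda=0$ analysis producing $\varphi=\cos(\sqrt{1+\beta}\,x)$ and $L=k\pi/\sqrt{1+\beta}$ is correct and matches the paper's second case.

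\medskip
\textbf{Part (ii).} Your conservation-of-mean argument (integrating $u_t+u_{xxx}=0$ against $1$ and using $u_{xx}(0,t)=u_{xx}(L,t)=0$) is more elementary than the paper's route, which instead exhibits the constant function as an unobservable adjoint eigenfunction. Both give the same obstruction.
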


The next theorem addressing controllability of the  nonlinear system (\ref{1}) is  our another main result of the paper.
\begin{theorem}
Let $T>0$,  $\beta \ne -1$ and $L\notin\mathcal{R}_{\beta} $ be given.  There exists  a $\delta>0$ such that for any $u_0,u_T\in L^2(0,L)$ with $$||u_0-\beta ||_{L^2(0,L)}+||u_T-\beta ||_{L^2(0,L)}\leq\delta,$$ one can find  a control input $h\in L^2(0,T)$ such that the system \eqref{1} admits unique solution
$$u\in C([0,T];L^2(0,L))\cap L^2(0,T;H^1(0,L))$$
satisfying
$$u(x,0)=u_0(x), \text{  } \text{  } \text{  } \text{  }u(x,T)=u_T(x).$$
\label{th1.2}
\end{theorem}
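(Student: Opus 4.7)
The plan is to reduce Theorem \ref{th1.2} to a fixed-point problem for which Theorem \ref{th1.1} supplies the linearized controllability. Writing $v := u-\beta$, a direct computation turns \eqref{1} into
$$v_t + (1+\beta)v_x + vv_x + v_{xxx} = 0\quad\text{in }(0,L)\times(0,T),$$
with the unchanged Neumann boundary conditions $v_{xx}(0,\cdot)=0$, $v_x(L,\cdot)=h$, $v_{xx}(L,\cdot)=0$ and data $v(\cdot,0)=u_0-\beta$, $v(\cdot,T)=u_T-\beta$. It therefore suffices to prove local exact endpoint controllability for this $\beta$-translated equation whenever $L\notin \mathcal{R}_{\beta}$.

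Next I would fix the functional framework $X_T:=C([0,T];L^2(0,L))\cap L^2(0,T;H^1(0,L))$ and, using the construction underlying Theorem \ref{th1.1} together with well-posedness for the \emph{forced} linear Neumann problem, produce a bounded linear control map $\Phi:L^2(0,L)\times L^2(0,L)\times Y_T\to L^2(0,T)$ with the following property: for every $\phi,\psi\in L^2(0,L)$ and forcing $f\in Y_T$, the solution $w$ of
$$w_t+(1+\beta)w_x+w_{xxx}=f,\quad w_{xx}(0,\cdot)=0,\ w_x(L,\cdot)=\Phi(\phi,\psi,f),\ w_{xx}(L,\cdot)=0,$$
with $w(\cdot,0)=\phi$, satisfies $w(\cdot,T)=\psi$ and the estimate $\|w\|_{X_T}\le C\bigl(\|\phi\|_{L^2}+\|\psi\|_{L^2}+\|f\|_{Y_T}\bigr)$. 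The existence and boundedness of $\Phi$ come from the observability inequality established while proving Theorem \ref{th1.1}, while the $X_T$-regularity of $w$ comes from the multiplier / Kato smoothing estimates for the Neumann boundary value problem.

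I would then define $\Gamma:X_T\to X_T$ by $\Gamma(v):=w$, where $w$ is the solution above associated with $\phi=u_0-\beta$, $\psi=u_T-\beta$, $f=-vv_x$, and control $h=\Phi(u_0-\beta,u_T-\beta,-vv_x)$. Any fixed point of $\Gamma$ yields a solution of the nonlinear Neumann control problem with the prescribed endpoint values, and $u=v+\beta$ then solves Theorem \ref{th1.2}. To run a Banach contraction argument on a closed ball $\overline{B_r}\subset X_T$, the key ingredients are the bilinear estimates
$$\|vv_x\|_{Y_T}\le C\|v\|_{X_T}^{2},\qquad \|vv_x-ww_x\|_{Y_T}\le C\bigl(\|v\|_{X_T}+\|w\|_{X_T}\bigr)\|v-w\|_{X_T}.$$
Granted these, pick $r$ small enough that $2Cr<1$ and then $\delta$ small enough that $C\delta\le r/2$; a direct computation shows that $\Gamma$ maps $\overline{B_r}$ into itself and is a strict contraction there, and the resulting unique fixed point gives the desired $v\in X_T$.

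\textbf{Main obstacle.} The principal difficulty is the nonlinear step in the Neumann framework. In Rosier's Dirichlet setting, the bilinear estimate reduces to the classical bound $\|vv_x\|_{L^{1}(0,T;L^{2})}\lesssim \|v\|_{X_T}^{2}$ combined with the Kato smoothing property. For the present Neumann conditions $u_{xx}(0,\cdot)=u_{xx}(L,\cdot)=0$ with nonhomogeneous trace $u_x(L,\cdot)=h$, one must first develop the analogous sharp trace and smoothing estimates for both the homogeneous and the forced linear problems, as the multiplier identities produce boundary contributions that differ from the Dirichlet case and must be tracked carefully to identify the correct space $Y_T$. Once the forced linear theory is in place so that $\Phi$ is bounded on $Y_T$-forcings and the bilinear estimate holds in the same $Y_T$, the contraction argument closes and Theorem \ref{th1.2} follows.
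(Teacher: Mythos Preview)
Your proposal is correct and follows essentially the same route as the paper: reduce to the $\beta$-translated system via $v=u-\beta$, invoke the linear controllability (Theorem~\ref{th1.1} / Proposition~\ref{control}) to build a bounded control operator, and close by a contraction argument on $\mathcal{Z}_T=C([0,T];L^2)\cap L^2(0,T;H^1)$ using the bilinear bound $\|vv_x\|_{L^1(0,T;L^2)}\le C\|v\|_{\mathcal{Z}_T}^2$. The only cosmetic difference is packaging: the paper uses a two-argument operator $\Psi(v_0,v_T+\nu(T,u))$ with $\nu(T,u)=\int_0^T W_0(T-\tau)(uu_x)\,d\tau$, whereas you fold the forcing into a three-argument map $\Phi(\phi,\psi,f)$; these are equivalent.

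One remark on your ``main obstacle'': you overstate the difficulty of the nonlinear step. The bilinear estimate $\|vv_x\|_{L^1(0,T;L^2)}\lesssim\|v\|_{\mathcal{Z}_T}^2$ is a pure interior inequality for functions in $\mathcal{Z}_T$ and is entirely insensitive to which boundary conditions are imposed, so no new work is needed there relative to the Dirichlet case. The genuine Neumann-specific input is the well-posedness and Kato/hidden-regularity theory for the \emph{linear} nonhomogeneous Neumann problem (Propositions~\ref{l1}--\ref{p3} in the paper), which fixes $Y_T=L^1(0,T;L^2(0,L))$ and guarantees the $\mathcal{Z}_T$-bound on $w$; once that is in hand, the contraction closes exactly as you outline.
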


The following remarks are now in order:

\begin{remark}

 In the case of $\beta=0$,  $\mathcal{N} $ is a proper subset  of $\mathcal {R}_{0}$.  The linear system (\ref{linear}) has  more critical length domains than
 that of the linear system (\ref{2a}). In the case of $\beta =-1$, every $L>0$ is critical for the system (\ref{linear}). By contrast,  if we  remove the term $u_x$ from the equation
 in (\ref{2a}),  every $L>0$ is  not critical   for the system (\ref{2a}).
 \end{remark}
 \begin{remark}
Every constant $\beta $ is a  steady  state  solution of the system (\ref{1}),  but is not for the system (\ref{2}).  The system (\ref{2}) is only known to be locally exact controllable
around the origin.  By contrast,  the system (\ref{1}) is locally exactly controllable around  any constant steady state  $\beta$ as  long as $L\notin  \mathcal{R}_{\beta}$.

\end{remark}
Theorem \ref{th1.1} will be proved using the same approach that Rosier used to establish Theorem A.  However, one will encounter some difficulties that demand special attention.
 The adjoint system of the linear system (\ref{linear})  is given by
\begin{equation}
\left\{
\begin{array}
[c]{lll}%
\psi_t+(1+\beta )\psi_x+\psi_{xxx}=0 &  &\text{ in } (0,L)\times(0,T),\\
(1+\beta )\psi(0,t)+\psi_{xx}(0,t)=0&  &\text{ in } (0,T),\\
(1+\beta )\psi(L,t)+\psi_{xx}(L,t)=0&  &\text{ in } (0,T),\\
\psi_x(0,t)=0&  &\text{ in } (0,T),\\
\psi(x,T)=\psi_T(x) & &\text{ in }(0,L).
\end{array}
\right.  \label{5}
\end{equation}
It is well known that the exact controllability of system \eqref{linear} is equivalent to the following observability inequality for the adjoint system \eqref{5}:
\begin{equation}
||\psi_T||_{L^2(0,L)}\leq C||\psi_x(L,\cdot)||_{L^2(0,T)}.
\label{6}
\end{equation}
However, the usual multiplier method and compactness arguments as those used in dealing with the  system \eqref{5} only lead to
\begin{equation}
||\psi_T||^2_{L^2(0,L)}\leq C_1||\psi_x(L,\cdot)||^2_{L^2(0,T)}+C_2||\psi(L,\cdot)||^2_{L^2(0,T)}.\label{7}
\end{equation}
One has to find a way to  remove the extra term present in \eqref{7}. For this, a technical lemma presented below, which reveals  some hidden regularities (or sharp trace regularities)
 for solutions of the adjoint system
\eqref{5}, is needed.

\begin{lemma}\label{sharp_int} (Hidden regularities)
For any $\psi_T\in L^2(0,L)$, the solution $$\psi\in C([0,T];L^2(0,L))\cap L^2(0,T;H^1(0,L))$$ of IBVP \eqref{5} possesses the following sharp trace properties
\begin{equation}
\sup_{x\in(0,L)}||\partial^r_x\psi(x,\cdot)||_{H^{\frac{1-r}{3}}(0,T)}\leq C_r||\psi_0||_{L^2(0,L)}
\label{9}
\end{equation}
for $r=0,1,2$.
\end{lemma}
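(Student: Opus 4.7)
The plan is to prove \eqref{9} by a Fourier-in-time analysis that exploits the fact that the linear KdV operator exchanges one time derivative for three spatial derivatives, which is precisely the scaling encoded in the exponent $(1-r)/3$. (The right-hand side of \eqref{9} should read $\|\psi_T\|_{L^2(0,L)}$, since $\psi$ is defined via its terminal datum.)

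First I would reverse time via $v(x,t)=\psi(x,T-t)$ to reduce to a forward initial-boundary value problem for $v$ with initial datum $\psi_T$ and the three adjoint boundary conditions. Standard multiplier arguments — testing the equation against $v$ for the $L^2$ energy identity and against $(1+x)v$ for the Kato smoothing effect, with careful use of the boundary conditions to track sign-indefinite boundary terms — yield the interior regularity
\begin{equation*}
v\in C([0,T];L^2(0,L))\cap L^2(0,T;H^1(0,L))
\end{equation*}
together with the boundary bound $\|v_x(L,\cdot)\|_{L^2(0,T)}\lesssim \|\psi_T\|_{L^2(0,L)}$, all with norms controlled by $\|\psi_T\|_{L^2(0,L)}$.

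To extract the full family of sharp traces, I would multiply $v$ by a smooth temporal cutoff $\chi(t)$ supported near $[0,T]$, extend the product by zero to all of $\mathbb R_t$, and take the Fourier transform in $t$. This converts the equation into a family of third-order boundary-value ODEs in $x$,
\begin{equation*}
\partial_x^3 \widehat{w} + (1+\beta)\partial_x \widehat{w} + i\tau\,\widehat{w}=\widehat{G}(x,\tau),\qquad x\in(0,L),
\end{equation*}
parametrized by $\tau\in\mathbb R$, where $\widehat{G}$ is the commutator of $\chi$ with $\partial_t$ and is controlled in $L^2_{x,\tau}$ by the energy/smoothing bound of the previous step. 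The characteristic roots of $\lambda^3+(1+\beta)\lambda+i\tau=0$ scale as $|\tau|^{1/3}$ for large $|\tau|$, with one root of positive real part and two of negative real parts. Solving the ODE explicitly via a Green's function together with the transformed boundary conditions, I would derive the estimate
\begin{equation*}
(1+|\tau|)^{\frac{2(1-r)}{3}}\|\partial_x^r \widehat{w}(\cdot,\tau)\|_{L^\infty_x(0,L)}^2 \lesssim \|\psi_T\|_{L^2(0,L)}^2 + \|\widehat{G}(\cdot,\tau)\|_{L^2_x(0,L)}^2
\end{equation*}
for $r=0,1,2$, and Plancherel in $\tau$ then yields \eqref{9}.

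I expect the main obstacle to be the Green's function analysis. The adjoint boundary conditions are non-standard — they couple $\psi$ and $\psi_{xx}$ at both endpoints — so the $3\times3$ determinant governing the boundary-data-to-solution map must be shown both to be non-vanishing for $\tau\in\mathbb R$ and to exhibit the correct growth rate in $|\tau|$, and controlling this determinant uniformly is the crux of the argument. The low-frequency regime, in which the $|\tau|^{1/3}$ scaling of the roots degenerates, must be handled separately using the interior $H^1$ estimate and the embedding $H^1(0,L)\hookrightarrow C([0,L])$, while the case $r=2$ additionally uses the equation itself to trade the second spatial derivative at the boundary for a time derivative plus lower-order terms — which is exactly how the negative Sobolev exponent $H^{-1/3}$ emerges from the scaling.
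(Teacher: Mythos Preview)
Your strategy is coherent and would likely succeed, but it is genuinely different from the route the paper takes, and the difference is worth spelling out.

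The paper never analyzes the $3\times 3$ boundary determinant for the coupled adjoint conditions $(1+\beta)\psi+\psi_{xx}=0$ at $x=0,L$. Instead it proceeds in two layers. First, in Section~\ref{Sec2} it proves the sharp trace estimates for the \emph{uncoupled} Neumann problem $v_t+v_{xxx}=f$, $v_{xx}(0,t)=h_1$, $v_x(L,t)=h_2$, $v_{xx}(L,t)=h_3$ (Proposition~\ref{p3}), by combining an explicit Laplace--transform boundary integral operator (Lemma~\ref{lem1a}, Proposition~\ref{l1}) with the whole--line sharp Kato smoothing of Kenig--Ponce--Vega (Proposition~\ref{l2}). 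Second, to obtain Lemma~\ref{sharp_int} (restated and proved as Proposition~\ref{sharp}), the paper runs a contraction fixed--point argument on a short time interval: given $v$ in the trace space $\mathcal{X}_\beta$, one solves $w_t+w_{xxx}=-(1+\beta)v_x$ with the \emph{uncoupled} data $w_{xx}(0,t)=-(1+\beta)v(0,t)$, $w_{xx}(L,t)=-(1+\beta)v(L,t)$, $w_x(L,t)=0$, and Proposition~\ref{p3} closes the loop because the boundary traces $v(0,\cdot),v(L,\cdot)\in H^{1/3}$ re--enter as $H^{-1/3}$ data with a small factor $\beta^{2/3}$, while the transport term contributes a factor $\beta^{1/2}$.

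What each approach buys: your direct Fourier--in--time/Green's function argument is self--contained and makes the $|\tau|^{1/3}$ scaling completely explicit, but it hinges on the adjoint boundary determinant being nonzero for all real $\tau$ with the correct growth --- a computation you correctly flag as the crux, and which is genuinely delicate since Lemma~\ref{sharp_int} must hold for \emph{every} $L>0$, including critical lengths where the associated spectral problem \eqref{c10} admits nontrivial solutions. The paper's perturbative route sidesteps this entirely: the only determinant it ever analyzes is the one for the clean Neumann problem in Proposition~\ref{l1}, and the coupled boundary conditions are absorbed as small inhomogeneous data via the contraction. The cost is that the argument is less direct and relies on the prior machinery of Section~\ref{Sec2}; the gain is robustness and modularity.

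Your observation that the right--hand side of \eqref{9} should read $\|\psi_T\|_{L^2(0,L)}$ is correct; this is a typo in the statement.
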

The sharp Kato smoothing properties   of solutions of the Cauchy problem of the KdV  equation posed on the whole line $\mathbb{R}$ due to Kenig, Ponce and Vega \cite{KePoVe} will play an important role in the  proof of  Lemma \ref{sharp_int}.

\medskip
Following the work of  Rosier \cite{Rosier}, the boundary control system of the KdV equation posed on the finite interval $(0,L)$
with various control inputs has been intensively studied  (cf.  \cite{GG,GG1,Gui}  and see \cite{cerpatut, RZsurvey} for more  complete reviews)
\begin{equation}
\left\{
\begin{array}
[c]{lll}%
u_t+u_x+uu_x+u_{xxx}=0 &  & \text{ in } (0,L)\times(0,T),\\
u(0,t)=g_1 (t),\text{ }u(L,t)=g_2 (t),\text{ }u_x(L,t)=g_3(t) &  & \text{ in }(0,T),\\
u(x,0)=u_0(x) & & \text{ in }(0,L).
\end{array}
\right.  \label{m2}
\end{equation}
The system \eqref{m2} has been found to have an interesting property: it behaves like a parabolic system if control only applied on the left end of the spatial domain $(0,L)$
($g_2=g_3\equiv 0$): the system  is only null controllable;  but if control is allowed to apply on the right end  of the spatial domain $(0,L)$, the system
 behaves like a hyperbolic system which is exactly controllable. Moreover, the critical length phenomena occurs only in the case that just one control
 is applied to the right end of the spatial domain $(0,L)$.

 In this paper we will also show that the boundary control systems of the KdV equation posed on $(0,L)$ with Neumann boundary conditions,
 \begin{equation}
\left\{
\begin{array}
[c]{lll}%
u_t+u_x+uu_x+u_{xxx}=0 &  & \text{ in } (0,L)\times(0,T),\\
u_{xx}(0,t)=h_1(t),\text{ }u_x(L,t)=h_2(t),\text{ }u_{xx} (L,t)=h_3 (t)
 &  & \text{ in }(0,T),\\
u(x,0)=u_0(x) & & \text{ in }(0,L),
\end{array}
\right.  \label{m1}
\end{equation}
possess  similar properties.

\begin{theorem}
Let $T>0$ and $L>0$ be given. There exists $\delta>0$ such that for any $u_0,u_T\in L^2(0,L)$ with
$$
 ||u_0||_{L^2(0,L)}+||u_T||_{L^2(0,L)}\leq\delta,
$$
one can find  $h_1, h_2, $ and $h_3$ satisfy one of the conditions below
\begin{itemize}
\item[(i)]  $h_1\in H^{-\frac{1}{3}}(0,T)$ and $h_2\in L^2(0,T)$, $h_3=0$;
\item[(ii)]$h_2\in L^2 (0,T)$, $h_3\in H^{-\frac{1}{3}} (0,T)$ and $h_1=0$;
\item[(iii)] $ h_1, \ h_3 \in H^{-\frac{1}{3}}(0,T)$,  $h_2=0$,
\end{itemize}
 such that the system \eqref{m1} admits unique
solution
$$
 u\in C([0,T];L^2(0,L))\cap L^2(0,T;H^1(0,L))
$$
satisfying
$$u(x,0)=u_0(x), \text{  } \text{  } \text{  } \text{  }u(x,T)=u_T(x).$$
\label{th3_int}
\end{theorem}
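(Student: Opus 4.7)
I would follow the same two-layer scheme used earlier for Theorem~\ref{th1.2}: first establish exact controllability of the linearization
\[ u_t+u_x+u_{xxx}=0, \qquad u(x,0)=u_0(x), \]
on $(0,L)\times(0,T)$ with the pair of active controls prescribed by (i), (ii) or (iii), then promote to the nonlinear statement via a contraction fixed-point argument treating $-u u_x$ as a perturbation. By the Hilbert Uniqueness Method, linear controllability is equivalent to an observability inequality for the adjoint IBVP \eqref{5} with $\beta=0$, i.e.\ $\psi_t+\psi_x+\psi_{xxx}=0$ together with $\psi(0,t)+\psi_{xx}(0,t)=0$, $\psi(L,t)+\psi_{xx}(L,t)=0$, and $\psi_x(0,t)=0$. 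A direct integration by parts against a smooth $u$ identifies the duality pairings between each control and a single adjoint trace: $h_1 \leftrightarrow \psi(0,\cdot)$, $h_2 \leftrightarrow \psi_x(L,\cdot)$, $h_3 \leftrightarrow \psi(L,\cdot)$. Case (i) thus demands a bound on $\|\psi_T\|_{L^2(0,L)}$ by $\|\psi(0,\cdot)\|_{H^{1/3}(0,T)}$ and $\|\psi_x(L,\cdot)\|_{L^2(0,T)}$; case (ii) by $\|\psi_x(L,\cdot)\|_{L^2}$ and $\|\psi(L,\cdot)\|_{H^{1/3}}$; case (iii) by $\|\psi(0,\cdot)\|_{H^{1/3}}$ and $\|\psi(L,\cdot)\|_{H^{1/3}}$; and these dual norms match the function spaces of the controls in the statement via the sharp trace Lemma~\ref{sharp_int}.

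\textbf{Observability for every $L>0$.} The crucial point is that the standard multiplier/compactness argument used earlier in the paper already yielded the inequality \eqref{7},
\[ \|\psi_T\|_{L^2(0,L)}^2 \le C_1\|\psi_x(L,\cdot)\|_{L^2(0,T)}^2 + C_2\|\psi(L,\cdot)\|_{L^2(0,T)}^2, \]
and the entire difficulty of Theorem~\ref{th1.1} lay in removing the second term at the price of $L\notin\mathcal{R}_\beta$. In configuration (ii) this inequality \emph{is} the desired observability, with no critical lengths. For configurations (i) and (iii), I would apply the mirror multiplier $(L-x)\psi$ (in place of $x\psi$) to the adjoint equation; the resulting identity yields
\[ \|\psi_T\|_{L^2(0,L)}^2 \le C_1'\|\psi_x(0,\cdot)\|_{L^2(0,T)}^2 + C_2'\|\psi(0,\cdot)\|_{L^2(0,T)}^2, \]
and the adjoint boundary condition $\psi_x(0,\cdot)=0$ annihilates the first term, producing observability by $\|\psi(0,\cdot)\|_{L^2}^2$ alone, which is more than enough for (i) and (iii). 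Thus observability is available for every $L>0$ in all three configurations: the extra control channel removes the obstruction responsible for the critical set, exactly as in the corresponding Dirichlet multi-control results.

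\textbf{Low-regularity controls and the nonlinear step.} To give meaning to the linear IBVP when $h_1$ or $h_3$ is only in $H^{-1/3}(0,T)$, I would define solutions by transposition, using Lemma~\ref{sharp_int} as the dual regularity estimate: since $\psi(0,\cdot),\psi(L,\cdot)\in H^{1/3}(0,T)$ and $\psi_x(L,\cdot)\in L^2(0,T)$ with continuous dependence on $\psi_T\in L^2(0,L)$, the boundary pairings $\langle h_1,\psi(0,\cdot)\rangle_{H^{-1/3},H^{1/3}}$ and $\langle h_3,\psi(L,\cdot)\rangle_{H^{-1/3},H^{1/3}}$ are well defined, and a Lions-type argument produces a unique solution in $C([0,T];L^2(0,L))\cap L^2(0,T;H^1(0,L))$ depending continuously on the data. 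With the linear controllability established and the HUM minimum-norm control operator continuous in the appropriate dual norms, the nonlinear step is standard: given $v$ in a small ball of the trajectory space, solve the linear IBVP with source $-vv_x$ and the HUM control chosen so that the output equals $u_T$ at time $T$; the bilinear estimate on $vv_x$ already employed in Theorem~\ref{th1.2} yields a contraction for $\delta$ sufficiently small, whose fixed point is the desired solution.

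\textbf{Main obstacle.} The principal technical point is not the observability, which is immediate in all three cases, but the careful functional-analytic framework for $H^{-1/3}$-valued controls: one must verify that the transposition solution genuinely belongs to $C([0,T];L^2)\cap L^2(0,T;H^1)$ and that the HUM control lands in the correct dual space. This forces use of Lemma~\ref{sharp_int} at full strength, including the $r=2$ trace $\psi_{xx}\in H^{-1/3}$, rather than the Kato-type estimates that suffice for the one-control Dirichlet problem.
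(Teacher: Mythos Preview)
Your overall architecture --- adjoint observability for the linearization, then a contraction argument for the nonlinear term --- matches the paper, and your treatment of case~(ii) is essentially correct: the compactness--uniqueness machinery behind \eqref{7} does yield observability by $\psi_x(L,\cdot)$ and $\psi(L,\cdot)$ for every $L>0$, since in the reduced spectral problem the three conditions $\psi(L)=\psi_x(L)=\psi_{xx}(L)=0$ force $\psi\equiv 0$.

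The gap is in your handling of cases (i) and (iii). The claimed ``mirror multiplier'' identity
\[
\|\psi_T\|_{L^2(0,L)}^2 \le C_1'\|\psi_x(0,\cdot)\|_{L^2(0,T)}^2 + C_2'\|\psi(0,\cdot)\|_{L^2(0,T)}^2
\]
is not obtainable from $(L-x)\psi$ or any other multiplier. A direct computation of the $L^2$ energy for the adjoint system \eqref{5} with $\beta=0$ gives
\[
\frac{d}{dt}\|\psi(\cdot,t)\|_{L^2(0,L)}^2 \;=\; \psi^2(L,t)+\psi_x^2(L,t)-\psi^2(0,t),
\]
so the trace $\psi(0,\cdot)$ enters with the \emph{wrong sign}: integrating from $0$ to $T$ bounds $\|\psi_T\|^2$ from \emph{below} by $\int_0^T\psi^2(0,t)\,dt$, not from above. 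Any multiplier of the form $m(x)\psi$ or $m(t)\psi$ will produce either the boundary terms at $x=L$ (as in \eqref{a6}) or an interior $L^2$ term, never a clean estimate by traces at $x=0$ alone. The boundary condition $\psi_x(0,\cdot)=0$ therefore does not rescue the argument.

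The paper's route is the same compactness--uniqueness scheme used for \eqref{obs1}, applied uniformly to all three cases. One starts from the weak estimate \eqref{a6} (which always involves $\psi(L,\cdot)$ and $\psi_x(L,\cdot)$), uses the hidden regularities of Lemma~\ref{sharp_int} to extract a convergent subsequence, and reduces to the over-determined spectral problem \eqref{c10} augmented by the vanishing of the observed trace(s). The extra boundary condition coming from the second control (e.g.\ $\psi_0(0)=0$ in case~(i), or $\psi_0(0)=\psi_0(L)=0$ in case~(iii)) then forces $\psi_0=\psi_0'=\psi_0''=0$ at one endpoint, so the ODE has only the trivial solution for \emph{every} $L>0$. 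This is why the critical set disappears --- not because the multiplier identity changes, but because the unique-continuation step becomes unconditional.
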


If all three boundary control inputs are used,  then the system (\ref{m1}) has much stronger controllability; it is  locally exactly controllable
around any smooth solution of the KdV equation in the space $H^s (0,L)$ for any $s\geq 0$ and is large time globally exactly controllable in  the space $H^s (0,L)$ for any $s\geq 0$.
\begin{theorem}
Let $ T > 0 $, $s\geq 0$  and $ L > 0 $ be given. Assume that $ y \in C^{\infty}(\mathbb{R},H^{\infty}(\mathbb{R}))$ satisfies
$$
 y_t+y_x+yy_x+y_{xxx}=0 \text{ }\text{ }(x,t)\in\mathbb{R}\times\mathbb{R}.
$$
Then, there exists $\delta>0$ such that for any $y_0,y_T\in H^s (0,L)$ with
$$
 ||u_0-y(\cdot,0)||_{H^s(0,L)}+||u_T-y(\cdot,T)||_{H^s (0,L)}\leq\delta,
$$
one can find
$$
 h_1\in H^{\frac{s-1}{3}}(0,T),\text{ } h_2\in H^{\frac{s}{3}}(0,T),\text{ }  h_3\in H^{\frac{s-1}{3}}(0,T)
$$
such that system \eqref{m1} admits a unique solution
$$
 u\in C([0,T];H^s (0,L))\cap L^2(0,T;H^{s+1}(0,L))
$$
satisfying
$$
 u(x,0)=u_0(x), \text{  } \text{  } \text{  } \text{  }u(x,T)=u_T(x).
$$
\label{th6_int}
\end{theorem}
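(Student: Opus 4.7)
The plan is to follow the standard HUM-plus-fixed-point strategy, adapted to the inhomogeneous boundary setting in arbitrary regularity $H^s$. First, I would reduce the problem to a small-data problem around zero by setting $v=u-y$. Subtracting the equation satisfied by $y$, $v$ solves
\begin{equation*}
v_t+v_x+(yv)_x+vv_x+v_{xxx}=0 \text{ in } (0,L)\times(0,T),
\end{equation*}
with boundary data $v_{xx}(0,t)=h_1(t)-y_{xx}(0,t)$, $v_x(L,t)=h_2(t)-y_x(L,t)$, $v_{xx}(L,t)=h_3(t)-y_{xx}(L,t)$, initial datum $u_0-y(\cdot,0)$ and target $u_T-y(\cdot,T)$, both of size $\delta$ in $H^s(0,L)$. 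Since $y\in C^\infty(\mathbb{R};H^\infty(\mathbb{R}))$, the shifts $y_{xx}(0,\cdot), y_x(L,\cdot), y_{xx}(L,\cdot)$ lie in every fractional Sobolev class, so it suffices to produce a control triple $(\tilde h_1,\tilde h_2,\tilde h_3)\in H^{(s-1)/3}(0,T)\times H^{s/3}(0,T)\times H^{(s-1)/3}(0,T)$ steering $v$ from this small initial state to this small target.

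The central step is exact controllability in $H^s(0,L)$ of the linear system
\begin{equation*}
w_t+w_x+(yw)_x+w_{xxx}=0,\qquad w_{xx}(0,t)=\tilde h_1,\ w_x(L,t)=\tilde h_2,\ w_{xx}(L,t)=\tilde h_3.
\end{equation*}
With three independent boundary inputs there is no critical-length obstruction (setting $\tilde h_3\equiv 0$ already recovers Theorem \ref{th3_int}), and by HUM the claim reduces to an observability inequality for the adjoint system of the schematic form
\begin{equation*}
\|\psi(\cdot,0)\|_{H^{-s}(0,L)}\leq C\bigl(\|\psi(0,\cdot)\|_{H^{(1-s)/3}(0,T)}+\|\psi(L,\cdot)\|_{H^{(1-s)/3}(0,T)}+\|\psi_x(L,\cdot)\|_{H^{-s/3}(0,T)}\bigr),
\end{equation*}
with exponents dictated by the KdV scaling $t\sim x^3$. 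I would establish this first at the level $s=0$, combining the Rosier-type multiplier identities with the sharp hidden-trace regularity of Lemma \ref{sharp_int}, extended to the variable smooth coefficient $1+y$ by perturbation; this gives the bound up to additive lower-order terms, which are then removed by the standard compactness-uniqueness argument, whose unique-continuation input is Holmgren's theorem applied to the smooth dispersive operator. Propagation to arbitrary $s\geq 0$ then proceeds by commutator and interpolation arguments, using the smoothness of $y$.

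Third, I close the nonlinear loop. Let $\Lambda$ denote the bounded control operator on $H^s(0,L)$ produced in the previous step, and set $X^s_T:=C([0,T];H^s(0,L))\cap L^2(0,T;H^{s+1}(0,L))$. For $v$ in a small ball of $X^s_T$, I solve
\begin{equation*}
w_t+w_x+(yw)_x+w_{xxx}=-vv_x,\qquad w(\cdot,0)=u_0-y(\cdot,0),
\end{equation*}
with boundary controls prescribed by $\Lambda$ so that $w(\cdot,T)=u_T-y(\cdot,T)$. The bilinear estimate for $vv_x$ in $X^s_T$, which rests on the same Kenig--Ponce--Vega Kato smoothing that underlies Lemma \ref{sharp_int}, shows that $v\mapsto w$ is a contraction on a ball of radius $O(\delta)$ for $\delta$ small enough; its fixed point furnishes the desired solution of \eqref{m1}.

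The main obstacle is the variable-coefficient observability inequality at every level $s\geq 0$: one must simultaneously accommodate the non-constant smooth coefficient $y(x,t)$, measure the three boundary traces in the sharp fractional spaces, and remove the compact lower-order remainder by unique continuation. Once this inequality and its $s$-dependent version are in place, both the existence of $\Lambda$ on each $H^s$ scale and the nonlinear contraction are obtained by now well-established machinery.
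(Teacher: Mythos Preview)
Your proposal outlines a plausible strategy, but it takes a substantially different and far more laborious route than the paper. The paper does \emph{not} use HUM, observability, or a fixed-point argument at all for this theorem. Instead it invokes an existing exact controllability result for the KdV equation posed on the whole line $\mathbb{R}$ (Theorem~1.2 in \cite{zhang2}), where the \emph{initial datum} $g\in H^s(\mathbb{R})$ plays the role of the control: given $u_0,u_T$ close to $y(\cdot,0),y(\cdot,T)$ in $H^s(0,L)$, one finds $g$ so that the full-line solution $w$ satisfies $w|_{(0,L)}(\cdot,0)=u_0$ and $w|_{(0,L)}(\cdot,T)=u_T$. One then simply \emph{reads off} the boundary traces $h_1=w_{xx}(0,\cdot)$, $h_2=w_x(L,\cdot)$, $h_3=w_{xx}(L,\cdot)$, whose claimed regularity in $H^{(s-1)/3}\times H^{s/3}\times H^{(s-1)/3}$ comes for free from the sharp Kato smoothing of the whole-line solution. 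The restriction $u:=w|_{(0,L)\times(0,T)}$ is the desired controlled trajectory.

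What each approach buys: the paper's trick exploits that with all three boundary controls active, any solution of the full-line KdV restricted to $(0,L)$ is automatically a solution of \eqref{m1} for \emph{some} admissible boundary data; this reduces the problem to the already-known full-line result and a trace computation, with no adjoint system, no variable-coefficient observability, and no $s$-dependent propagation needed. Your approach is self-contained but front-loads considerable work: establishing the variable-coefficient observability inequality at every $s\ge 0$, with traces in the sharp fractional spaces and a compactness--uniqueness step, is a genuine project in itself (and the commutator/interpolation lift from $s=0$ to general $s$ is not routine for boundary-control operators). There is no evident fatal gap, but the paper's argument is an order of magnitude shorter.
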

 \begin{theorem} Let $L>0$ and $\gamma >0$ be given. There exists a $T>0$ such that for any $u_0, \ u_T\in L^2 (0,L)$ satisfying
 \[ \|u_0\|_{L^2 (0,L) } +\|u_T\|_{L^2 (0,L)} \leq \gamma, \]
 one can find
$$
 h_1\in H^{-\frac{1}{3}}(0,T),\text{ } h_2\in L^2 (0,T),\text{ }  h_3\in H^{-\frac{1}{3}}(0,T)
$$
such that system \eqref{m1} admits a unique solution
$$
 u\in C([0,T];L^2 (0,L))\cap L^2(0,T;H^{1}(0,L))
$$
satisfying
$$
 u(x,0)=u_0(x), \text{  } \text{  } \text{  } \text{  }u(x,T)=u_T(x).
$$
\label{th6_int_z}
 \end{theorem}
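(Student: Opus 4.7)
My plan is to reduce Theorem~\ref{th6_int_z} to the local exact controllability of Theorem~\ref{th3_int} by a three-stage concatenation that joins $u_0$ and $u_T$ through a small-norm intermediate state. Fix $T_2 > 0$ and let $\delta = \delta(L, T_2) > 0$ be the small-data threshold furnished by Theorem~\ref{th3_int}, item (iii); the total control time will be $T := T_1 + T_2 + T_3$ with $T_1 = T_1(\gamma)$ and $T_3 = T_3(\gamma)$ to be chosen large.

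On $[0, T_1]$ (drive-down stage) I would extend $u_0$ to a compactly-supported $\tilde u_0 \in L^2(\mathbb{R})$ of norm $\lesssim \gamma$ and solve the pure Cauchy problem for the full nonlinear KdV equation on the real line (globally well-posed in $L^2(\mathbb{R})$ by Bourgain's theory), obtaining $V \in C(\mathbb{R}; L^2(\mathbb{R})) \cap L^2_{\mathrm{loc}}(\mathbb{R}; H^1_{\mathrm{loc}}(\mathbb{R}))$. The sharp Kenig--Ponce--Vega trace estimates (the whole-line analogue of Lemma~\ref{sharp_int}) give $V_{xx}(0, \cdot), V_{xx}(L, \cdot) \in H^{-1/3}(0, T_1)$ and $V_x(L, \cdot) \in L^2(0, T_1)$, with norms controlled by $\gamma$. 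Choosing the boundary inputs $h_j$ to be exactly these traces, uniqueness for the IBVP~\eqref{m1} forces $u = V$ on $(0, L) \times [0, T_1]$, so the stage is complete as soon as $T_1$ is chosen so large that $\|V(\cdot, T_1)\|_{L^2(0, L)} \leq \delta/2$.

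On $[T_1, T_1 + T_2]$ (small-norm hop), Theorem~\ref{th3_int} (item (iii)) yields controls steering $u(\cdot, T_1)$ to any prescribed small state $u^* \in L^2(0, L)$ with $\|u^*\|_{L^2} \leq \delta/2$. On $[T_1 + T_2, T]$ (drive-up stage) I would steer $u^*$ to $u_T$ by exploiting the time-reversal symmetry of KdV on the interval: the substitution $(x, t) \mapsto (L - x, T - t)$ maps solutions of the equation to solutions, interchanging the roles of the left and right boundaries (with the appropriate sign flips on odd-order traces), so applying the drive-down construction to $u_T(L - \cdot) \in L^2(0, L)$ and time-reversing produces controls on $[T_1 + T_2, T]$ that bring $u^*$ to $u_T$. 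Concatenating the three stages yields controls $(h_1, h_2, h_3)$ in the required regularity classes and a solution of \eqref{m1} joining $u_0$ to $u_T$ at time $T$.

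The main obstacle is the final assertion of Stage~I: showing that $\|V(\cdot, T_1)\|_{L^2(0, L)}$ can be made arbitrarily small by taking $T_1 = T_1(\gamma)$ sufficiently large, for arbitrary $L^2$ extensions of size $\lesssim \gamma$. For the linear equation this follows at once from the Airy dispersive bound $\|e^{-t \partial_x^3} \tilde u_0\|_{L^\infty(\mathbb{R})} \lesssim t^{-1/3} \|\tilde u_0\|_{L^1(\mathbb{R})}$. In the nonlinear regime, solitons (which, thanks to the drift $u_x$, propagate rightward at speeds $\geq 1$) and dispersive radiation both eventually exit the interval $(0, L)$; making this quantitative requires combining Bourgain-type $X^{s,b}$ smoothing estimates on $\mathbb{R}$ with propagation-of-support arguments, and is the principal technical step of the proof.
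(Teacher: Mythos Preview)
Your global architecture---drive the state down to something small, use local controllability for the small-norm hop, and handle $u_T$ by time reversal---is exactly the skeleton the paper uses (the paper phrases it as ``without loss of generality $u_T=0$'', which is your Stage~III in disguise). The difference, and the genuine gap in your proposal, is the drive-down mechanism.

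The paper does \emph{not} rely on dispersion of the free KdV flow on $\mathbb{R}$. Instead it embeds $(0,L)$ into the larger interval $(-L,L)$, extends $u_0$ by zero, and solves on $(-L,L)$ the \emph{damped} KdV equation
\[
v_t+v_x+vv_x+v_{xxx}+b(x)v=0,\qquad v(-L,t)=v(L,t)=v_x(L,t)=0,
\]
with the damping $b$ supported in $(-L,-L/2)$, i.e.\ entirely outside the original interval. For this system one has the known exponential stabilization estimate $\|v(\cdot,t)\|_{L^2(-L,L)}\le C\|u_0\|_{L^2(0,L)}e^{-\nu t}$ with $C,\nu$ depending only on $L$; choosing $t^*$ so that $Ce^{-\nu t^*}\gamma\le\delta$ makes $v(\cdot,t^*)$ small uniformly over all data of size $\le\gamma$. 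The boundary controls $h_j$ on $(0,t^*)$ are then simply the traces $v_{xx}(0,\cdot)$, $v_x(L,\cdot)$, $v_{xx}(L,\cdot)$ of this auxiliary solution, which lie in the required spaces by sharp Kato smoothing, and the restriction of $v$ to $(0,L)$ solves \eqref{m1} with those controls since $b\equiv 0$ there.

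Your Stage~I replaces this by the undamped Cauchy problem on $\mathbb{R}$ and asks for a uniform local $L^2$ decay: a time $T_1=T_1(\gamma,\delta)$ such that \emph{every} whole-line KdV solution launched from data of $L^2$-norm $\le\gamma$ satisfies $\|V(\cdot,T_1)\|_{L^2(0,L)}\le\delta/2$. This is precisely the step that is not available. For the linear Airy flow the $L^1\!\to\! L^\infty$ bound settles it, but for the nonlinear equation at the $L^2$ level no such uniform local-decay statement is known; it would amount to a quantitative soliton-resolution / asymptotic-completeness theorem for merely $L^2$ data. Your heuristic (solitons drift rightward at speed $>1$ and eventually exit, radiation disperses) is morally correct but yields no proof---small-amplitude solitons are wide and slow, and there is no decomposition of general $L^2$ data into solitons plus radiation to work with. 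The paper's damping trick sidesteps this entirely: the artificial dissipation in the extension region furnishes exponential decay as a black-box stabilization result, with no asymptotic analysis of KdV required.
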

Finally, if we consider the system  with only control acting on the left end of the spatial domain $(0,L)$,
\begin{equation}
\left\{\begin{array}[c]{lll}%
        u_t+u_x+u_{xxx}+uu_x=0 &  & \text{ in } (0,L)\times(0,T),\\
        u_{xx}(0,t)=h_1(t),\text{ }u_x(L,t)=0,\text{ }u_{xx}(L,t)=0 &  & \text{ in }(0,T),\\
        u(x,0)=u_0(x)& & \text{ in }(0,L),
       \end{array}
\right.  \label{1abc}
\end{equation}
 we have the following null controllability result.
\begin{theorem}[Null Controllability]
Let $ T > 0 $ be fixed. For $\overline{u}_0\in L^2(0,L)$,  let 
$$
 \overline{u}\in C([0,T];L^2(0,L))\cap L^2(0,T;H^1(0,L))
$$
be the solution of the following system
\begin{equation}
\left\{\begin{array}[c]{lll}%
        \overline{u}_t+\overline{u}_x+\overline{u}_{xxx}+\overline{uu}_{x}=0&  & \text{ in } (0,L)\times(0,T),\\
        \overline{u}_{xx}(0,t)=0,\text{ }\overline{u}_x(L,t)=0,\text{ }\overline{u}_{xx}(L,t)=0&  & \text{ in }(0,T),\\
        \overline{u}(x,0)=\overline{u}_0(x)& & \text{ in }(0,L).
       \end{array}
\right.  \label{1ab}
\end{equation}
Then, there exists $\delta>0$ such that for any $u_0\in L^2(0,L)$ satisfying
$$
 ||u_0-\overline{u}_0||_{L^2(0,L)} < \delta,
$$
there exists $h_1(t)\in L^2(0,T)$ such that the solution $u(x,t)$ of the system \eqref{1abc} belongs to the space $$C([0,T];L^2(0,L))\cap L^2(0,T;H^1(0,L))$$ and satisfies
$$u(x,T)=\overline{u}(x,T) \text{  } \text{ in  }(0,L).$$
\label{null}
\end{theorem}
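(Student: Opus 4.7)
The plan is a standard perturbation-and-fixed-point argument for null controllability: reduce the problem to steering a small perturbation to zero, obtain null controllability of the linearization via the Hilbert Uniqueness Method (HUM) and an observability inequality for its adjoint, then absorb the nonlinearity by Banach's fixed point theorem.

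First I would set $v = u - \overline{u}$. Subtracting the equations satisfied by $u$ and $\overline{u}$ and using the identity $uu_x - \overline{u}\,\overline{u}_x = (\overline{u}\,v)_x + \tfrac{1}{2}(v^2)_x$, and noting that $\overline{u}$ satisfies all three boundary conditions of \eqref{1ab}, the statement reduces to: for $v_0 := u_0 - \overline{u}_0$ sufficiently small in $L^2(0,L)$ there exists $h_1 \in L^2(0,T)$ such that the solution of
\begin{equation*}
\left\{\begin{array}{l}
v_t + v_x + v_{xxx} + (\overline{u}\,v)_x + \tfrac{1}{2}(v^2)_x = 0,\\
v_{xx}(0,t)=h_1(t),\ v_x(L,t)=0,\ v_{xx}(L,t)=0,\\
v(\cdot,0) = v_0
\end{array}\right.
\end{equation*}
satisfies $v(\cdot,T) = 0$. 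Only the boundary condition at $x=0$ carries the control, since $\overline{u}_{xx}(0,t)=0$.

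Second, I would establish null controllability of the corresponding linear system with source,
\begin{equation*}
w_t + w_x + w_{xxx} + (\overline{u}\,w)_x = f,
\end{equation*}
subject to the same boundary conditions, with $w(\cdot,0)=w_0 \in L^2(0,L)$ and $f \in L^1(0,T;L^2(0,L))$, producing a control operator that depends continuously and linearly on $(w_0,f)$. By duality this amounts to an observability inequality of the form
\begin{equation*}
\|\psi(\cdot,0)\|_{L^2(0,L)}^{2} \leq C \int_0^T |\psi(0,t)|^{2}\,dt
\end{equation*}
for the backward adjoint system (formally $-\psi_t - \psi_x - \psi_{xxx} - \overline{u}\psi_x = 0$ with the Dirichlet/Neumann conditions dictated by integration by parts against $h_1$). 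I would prove this inequality by a global Carleman estimate for the operator $\partial_t+\partial_x+\partial_{xxx}$ adapted to the Neumann-type conditions at $x=L$, using a weight that blows up at $t=0,T$ so that the lower order term $\overline{u}\psi_x$ can be absorbed from the right-hand side using $\overline{u}\in L^\infty(0,T;L^2)\cap L^2(0,T;H^1)$. The hidden trace regularity of Lemma \ref{sharp_int} (or its analogue for the present boundary configuration) ensures that the trace $\psi(0,\cdot)$ is well defined and enters with the correct integrability.

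Third, with the continuous null-control operator in hand, I would run a Banach fixed point on a small closed ball of $X := C([0,T];L^2(0,L))\cap L^2(0,T;H^1(0,L))$. For $z\in X$ small, set $\Gamma(z) := v$ where $v$ solves the linear system above with forcing $f = -\tfrac{1}{2}(z^2)_x$ and $h_1$ chosen by the HUM map so that $v(\cdot,T)=0$. Standard Kato smoothing and Sobolev embedding give $\|(z^2)_x\|_{L^1(0,T;L^2)} \leq C\|z\|_X^{2}$, so for $\|v_0\|_{L^2}$ sufficiently small $\Gamma$ maps a small ball into itself and is a contraction; its fixed point $v$ solves the full nonlinear problem with $v(\cdot,T)=0$, i.e. $u(\cdot,T)=\overline{u}(\cdot,T)$. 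The main obstacle is clearly the observability inequality for the adjoint with the potential $\overline{u}\psi_x$: because the control acts only at the left endpoint the problem is of parabolic type and the Rosier-type spectral approach is unavailable, so a genuine global Carleman estimate must be built for the Neumann boundary configuration at $x=L$ and be robust enough to carry the rough trajectory $\overline{u}$ without any smallness assumption. Once this estimate is in place, everything else is routine.
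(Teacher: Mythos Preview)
Your outline is correct and would yield the theorem, but the paper proceeds along a somewhat different route, and it is worth noting where the two diverge.

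First, for the linear null controllability step you propose to build a global Carleman estimate tailored to the Neumann conditions at $x=L$. The paper instead observes (in the Remark immediately preceding the proof) that the problem with boundary data $u_{xx}(0,t)=h_1$, $u_x(L,t)=u_{xx}(L,t)=0$ is equivalent, via the hidden trace regularities, to the problem with $y(0,t)=k_1$, $y_x(L,t)=y_{xx}(L,t)=0$, and then simply invokes the Carleman estimate already proved by Guilleron \cite[Proposition~3]{Gui} for that configuration. This saves the work of redoing the Carleman machinery; your plan to derive it directly is perfectly viable but heavier.

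Second, and more substantively, for the passage from linear to nonlinear the paper does \emph{not} use a Banach contraction. It sets $q=u-\bar u$, writes the equation with variable coefficient $\xi=\bar u+\tfrac{z}{2}$, and applies the Kakutani fixed point theorem to the set-valued map $z\mapsto T(z)$ of all controlled trajectories with coefficient $\xi$, using Aubin--Lions compactness in $L^2((0,T)\times(0,L))$. Your approach instead freezes $\bar u$ in the linear part, treats only the quadratic remainder $-\tfrac12(z^2)_x$ as forcing, and contracts via the linear HUM operator. Your route is arguably cleaner (it gives a Lipschitz control map and avoids set-valued analysis), but it relies on having a \emph{continuous linear} control operator for the linearization with fixed potential $\bar u$; the Kakutani argument is more forgiving in that it only needs existence of a null control obeying a uniform bound, not its continuous dependence, which is why it is the standard choice when the Carleman constant depends on the coefficient in a way that is hard to track. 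Either argument closes the proof.
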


The paper is organized as follows.

\medskip
----  In Section \ref{Sec2},  we  study the  non-homogeneous boundary value problem  of the KdV equation on the finite interval $(0,L)$,
 \begin{equation}
\left\{
\begin{array}
[c]{lll}%
u_t+u_x+uu_x+u_{xxx}=0 &  & \text{ in } (0,L)\times(0,T),\\
u_{xx}(0,t)=h_1(t),\text{ }u_x(L,t)=h_2(t),\text{ }u_{xx} (L,t)=h_3 (t)
 &  & \text{ in }(0,T),\\
u(x,0)=u_0(x) & & \text{ in }(0,L),
\end{array}
\right.  \label{m3}
\end{equation}
for its well-posedness  in the space $L^2 (0,L)$. We will show that  the system (\ref{m3}) is locally well-posed in the space $L^2 (0,L)$: for any $u_0\in L^2 (0,L)$,
\[ \mbox{$h_1\in H^{-\frac13} (\mathbb{R}^+), \ h_2 \in L^2 (\mathbb{R}^+), $ and $h_3\in H^{-\frac13} (\mathbb{R}^+),$ }\]
 there exists a $T>0$ such that  (\ref{m3}) admits a unique solution $$u\in C([0,T]; L^2 (0,L))\cap L^2 (0,T; H^1 (0,L)).$$
 Various linear estimates including hidden regularities  will be  presented for solutions of the linear system associated to (\ref{m3}), which will play important roles in studying controllability of the system.

 \medskip
---- In Section \ref{Sec3},   the boundary control system (\ref{1}) will be investigated for its controllability.  We investigate  first the linearized system (\ref{linear}) and its corresponding adjoint system \eqref{5} for their controllability and observability. In particular,  the hidden regularities for solutions of the adjoint system \eqref{5}
will be presented and then be used to prove Theorem \ref{th1.1} and Theorem \ref{th1.2}.

 \medskip
---- The sketch of proofs of Theorem \ref{th3_int} and Theorem \ref{null} will be presented in Section \ref{Sec4} together with the proofs of Theorem \ref{th6_int} and Theorem \ref{th6_int_z}.

 \medskip
 We end our introduction with a few more comments.  Having shown the nonlinear  system (\ref{1})  is locally exactly controllable (Theorem \ref{th1.2}) if the the length  of the spatial domain is not critical,
one naturally would  like to show the system (\ref{1}) is still locally exactly controllable  when the length of its spatial domain is critical as in the  case of system (\ref{2})  (see Theorem C).
We believe that  the same approach developed  in \cite{cerpa,coron02,coron}  to prove  Theorem C for the system (\ref{2})  can be adapted to obtain  similar results for system (\ref{1}).
However,
the Neumann boundary conditions present  some extra difficulties.  In particular,  the adjoint linear system associated to \eqref{1} is different from   the adjoint  linear system associated to \eqref{2}.
The unobservable solutions of the adjoint system associated to \eqref{1}  are not  the solutions of  the forward linear system  associated to \eqref{1}.   When using  the power series expansion method proposed in \cite{coron02},  there are more terms appeared  that  demand special handling.   We plan to continue to study system (\ref{1})  with the critical length for its controllability and present our results in the forthcoming paper.

\section{Well-posedness\label{Sec2}}
In this section, we  will show the initial-boundary value problem
\begin{equation}
\left\{
\begin{array}
[c]{lll}%
u_t+u_x+uu_x+u_{xxx}=0 &  & \text{ in } (0,L)\times(0,T),\\
u_{xx}(0,t)=h_1(t),\text{ }u_x(L,t)=h_2(t),\text{ }u_{xx}(L,t)=h_3(t) &  & \text{ in }(0,T),\\
u(x,0)=u_0(x) & & \text{ in }(0,L),
\end{array}
\right.  \label{1-a}
\end{equation}
is  locally well-posed in the space $L^2  (0,L)$ with $u_0(x) \in L^2 (0,L)$ and
\[ h_1, \ h_3 \in H^{-\frac13}(0,T), \qquad h_2\in L^2 (0,T)  .\]

We begin by considering the  following linear non-homogeneous  boundary value  problem,
\begin{equation}
\left\{
\begin{array}
[c]{lll}%
w_t+w_{xxx}=0,  \quad w(x,0)=0  &  & x\in(0,L), t>0,\\
w_{xx}(0,t)=h_1(t),\text{ }w_x(L,t)=h_2(t),\text{ }w_{xx}(L,t)=h_3(t)&  & t>0. \end{array}
\right.  \label{w1b}
\end{equation}
First, we  derive  an explicit solution formula for  its solution.
Applying the Laplace transform with respect to $t$, \eqref{w1b} is converted to
\begin{equation}
\left\{
\begin{array}
[c]{l}%
s\hat{w}+\hat{w}_{xxx}=0,\\
\hat{w}_{xx}(0,s)=\hat{h}_1(s),\text{ }\hat{w}_x(L,s)=\hat{h}_2(s),\text{ }\hat{w}_{xx}(L,s)=\hat{h}_3(s),
\end{array}
\right.  \label{w2}
\end{equation}
where
$$\hat{w}(x,s)=\int_0^{+\infty}e^{-st}w(x,t)dt$$
and
$$\hat{h}_j(s)=\int_0^{+\infty}e^{-st}h(t)dt, \ \ j=1,2,3.$$
The solution $\hat{w}(x,s)$ can be written in the form
$$\hat{w}(x,s)=\sum_{j=1}^3 c_j(s)e^{\lambda_j(s)x},$$
where $\lambda_j(s)$, $j=1,2,3$ are the solutions of the characteristic equation
$$s+\lambda^3=0$$
and $c_j(s)$, $j=1,2,3$, solve the linear system
\begin{equation}
\underbrace{\begin{pmatrix}
\lambda_1^2 & \lambda_2^2 & \lambda_3^2  \\
\lambda_1e^{\lambda_1L} & \lambda_2e^{\lambda_2L} & \lambda_3e^{\lambda_3L}  \\
\lambda_1^2e^{\lambda_1L} & \lambda_2^2e^{\lambda_2L} & \lambda_3^2e^{\lambda_3L}
\end{pmatrix}}_A
\begin{pmatrix}
c_1  \\
c_2  \\
c_3
\end{pmatrix}=
\underbrace{\begin{pmatrix}
\hat{h}_1 \\
\hat{h}_2  \\
\hat{h}_3
\end{pmatrix}.}_{\vec{h}}
\label{w3}
\end{equation}
By Cramer's rule,
$$c_j=\frac{\Delta_j(s)}{\Delta(s)}, \ \ j=1,2,3,$$
with $\Delta$ the determinant of $A$ and $\Delta_j$ the determinant of the matrix $A$ with the $j$th-column replaced by $\vec{h}$. The solution $w(x,t)$ for \eqref{w1b} can be written in the form
\begin{equation}\label{rep}
w(x,t)=\sum_{m=1}^3w_m(x,t),
\end{equation}
where $w_m(x,t)$ solves \eqref{w1b} with $h_j\equiv0$ when $j\neq m$, $j,m=1,2,3$. Using the inverse Laplace transform yields
$$w(x,t)=\frac{1}{2\pi i}\int^{r+i\infty}_{r-i\infty}e^{st}\hat{w}(x,s)ds=\sum_{j=1}^{3}\frac{1}{2\pi i}\int^{r+i\infty}_{r-i\infty}\frac{\Delta_{j}(s)}{\Delta(s)}e^{\lambda_j(s)x}ds,$$
for $r>0$. Combining this with \eqref{rep} we can write the values of $w_m$ as follows for $m=1,2,3$,
  $$w_m(x,t)=\sum_{j=1}^{3}\frac{1}{2\pi i}\int^{r+i\infty}_{r-i\infty}\frac{\Delta_{j,m}(s)}{\Delta(s)}e^{\lambda_j(s)x}\hat{h}_m(s)ds\equiv[W_{m,j}(t)h_m](x).$$ In the last two formulas, the right-hand side are continuous with respect to $r$ for $r\geq0$. As the left-hand sides do not depend on $r$, we can take $r=0$ in these formulas. Moreover,
$$w_{j,m}(x,t)=w^+_{j,m}(x,t)+w^-_{j.m}(x,t)$$
where
$$w^+_{j,m}(x,t)=\frac{1}{2\pi i}\int_0^{+i\infty}e^{st}\frac{\Delta_{j,m}(s)}{\Delta(s)}\hat{h}_m(s)e^{\lambda_j(s)x}ds$$
and
$$w^-_{j,m}(x,t)=\frac{1}{2\pi i}\int_{-i\infty}^0e^{st}\frac{\Delta_{j,m}(s)}{\Delta(s)}\hat{h}_m(s)e^{\lambda_j(s)x}ds,$$
for $j,m=1,2,3$. Here $\Delta_{j,m}(s)$ is obtained from $\Delta_j(s)$ by letting $\hat{h}_m(s)=1$ and $\hat{h}_k(s)=0$ for $k\neq m$, $k,m=1,2,3$. Making the substitution $s=i\rho^3$ with $\rho\geq0$ in the characteristic equation
$$s+\lambda^3=0.$$
The three roots are given in terms of $\rho$ by
\begin{equation}
\lambda_1(\rho)=i\rho,\ \ \ \lambda_2(\rho)=-i\rho\left(\frac{1+i\sqrt{3}}{2}\right), \ \ \ \lambda_3(\rho)=-i\rho\left(\frac{1-i\sqrt{3}}{2}\right),
\label{w4}
\end{equation}
thus $w^+_{j,m}$ has the following form
$$w^+_{j,m}(x,t)=\frac{1}{2\pi i}\int_0^{+\infty}e^{i\rho^3t}\frac{\Delta^+_{j,m}(\rho)}{\Delta^+(\rho)}\hat{h}^+_m(\rho)e^{\lambda^+_j(\rho)x}3i\rho^2d\rho$$
and
$$w^-_{j,m}(x,t)=\overline{w^+_{j,m}(x,t)},$$
where $\hat{h}^+_m(\rho)=\hat{h}_m(i\rho^3)$, $\Delta^+(\rho)=\Delta(i\rho^3)$, $\Delta^+_{j,m}(\rho)=\Delta_{j,m}(i\rho^3)$ and $\lambda^+_j(\rho)=\lambda_j(i\rho^3)$.

\medskip
Then the solution of the IBVP \eqref{w1b} has the following representation.

\begin{lemma}\label{lem1a}
Given  $\vec{h}=(h_1,h_2,h_3)$, the  solution $w$ of the IBVP \eqref{w1b} can be written in the form
\[
w(x,t)=[W_{bdr}\vec{h}](x,t):= \sum_{j,m=1}^3[W_{j,m}h_m](x,t).
\]
\end{lemma}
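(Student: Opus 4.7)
The plan is to verify that the explicit representation built up in the preceding calculation really does give a solution of \eqref{w1b}. By linearity, it is enough to treat separately, for each $m\in\{1,2,3\}$, the problem $(P_m)$ in which $h_m$ is the only nonzero boundary datum, and then sum the three resulting solutions; this is exactly the decomposition in \eqref{rep}. For each $(P_m)$, I would take the Laplace transform in $t$, reducing the IBVP to the boundary value ODE \eqref{w2}. The characteristic equation $s+\lambda^3=0$ has three simple roots $\lambda_1(s),\lambda_2(s),\lambda_3(s)$, so every bounded Laplace image has the form $\hat w(x,s)=\sum_j c_j(s)e^{\lambda_j(s)x}$. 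Imposing the three boundary conditions produces the system \eqref{w3}, and Cramer's rule yields $c_j(s)=\Delta_{j,m}(s)/\Delta(s)$.

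Next, I would invert the Laplace transform along a vertical line $\mathrm{Re}\,s=r$ with $r>0$, and then push the contour to the imaginary axis. The key technical issue is to show that $\Delta(s)$ has no zeros in the closed right half-plane $\mathrm{Re}\,s\ge 0$ except at most at $s=0$, so that the Cauchy theorem applies and no residue contributions arise. The substitution $s=i\rho^3$, $\rho\in\mathbb R$, then parametrises the imaginary axis and yields the explicit roots \eqref{w4} as well as the integral formulas defining $w^{\pm}_{j,m}$; the symmetry $w^-_{j,m}=\overline{w^+_{j,m}}$ is immediate because the original kernel is real. Putting the pieces together defines $W_{j,m}h_m$ and hence $W_{bdr}\vec h=\sum_{j,m}W_{j,m}h_m$.

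To finish, I would verify that the constructed $w$ actually solves \eqref{w1b}: the PDE $w_t+w_{xxx}=0$ follows because $e^{st}e^{\lambda_j(s)x}$ is a solution of $\partial_t+\partial_x^3$ whenever $s+\lambda_j^3=0$; the three boundary conditions $w_{xx}(0,t)=h_1$, $w_x(L,t)=h_2$, $w_{xx}(L,t)=h_3$ follow from reading off the rows of \eqref{w3} against the Laplace inversion formula; and the initial condition $w(x,0)=0$ follows from the Paley--Wiener characterization of causal Laplace transforms, since the representation is concentrated on $\mathrm{Re}\,s\ge 0$ and vanishes at $t\to 0^+$. Here one must work in the appropriate distributional sense because the data $h_1,h_3$ lie only in $H^{-1/3}$, and the pointwise boundary traces must be read as the correct limits in the trace spaces.

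The main obstacle I anticipate is the combination of two points: first, proving $\Delta(s)\neq 0$ on $i\mathbb R\setminus\{0\}$, which requires analysing the trigonometric/exponential combination obtained when the roots \eqref{w4} are inserted into the determinant, and handling the tame singularity at $\rho=0$; second, showing that the oscillatory integrals defining $w^{\pm}_{j,m}$ converge in the sense of distributions on $(0,L)\times(0,T)$ for $\vec h$ in the stated low-regularity spaces. Both are standard in the Bona--Sun--Zhang style of analysis for third-order dispersive IBVPs and will be used crucially in the ensuing estimates in Section \ref{Sec2}, but they are the only genuinely nontrivial pieces of the otherwise algebraic derivation given above.
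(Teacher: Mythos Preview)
Your proposal follows essentially the same route as the paper: the lemma is presented there as a summary of the preceding formal Laplace-transform derivation (transform in $t$, characteristic equation, Cramer's rule, inversion along $\mathrm{Re}\,s=r$ pushed to the imaginary axis with the substitution $s=i\rho^3$), with no separate proof given. The technical points you raise---nonvanishing of $\Delta(s)$, convergence of the oscillatory integrals, and that the formula actually solves \eqref{w1b}---are not addressed at this stage in the paper but are implicitly handled by the estimates in Proposition~\ref{l1}, so your extra verification steps are more careful than what the paper does here but not a different approach.
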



Let $\vec{h}:=(h_1,h_2,h_3)\in\mathcal{H}_T$ with $$\mathcal{H}_T=H^{-\frac{1}{3}}(0,T)\times L^2(0,T)\times H^{-\frac{1}{3}}(0,T)$$ and $$\mathcal{Z}_T:=C([0,T];L^2(0,L))\cap L^2(0,T;H^1(0,L)).$$ The following lemma holds, for solution of the system \eqref{w1b}.

\begin{proposition}\label{l1}
Let $T>0$ be given. There exists a constant $C>0$ such that for any $\vec{h}\in\mathcal{H}_T$ the system \eqref{w1b} admits a unique solution $w\in\mathcal{Z}_T$. Moreover, there exists a constant $C>0$ such that
$$||w||_{\mathcal{Z}_T}+\sum_{j=0}^2||\partial^j_xw||_{L^{\infty}(0,L;H^{\frac{1-j}{3}}(0,T))}\leq C||\vec{h}||_{\mathcal{H}_T}.$$
\end{proposition}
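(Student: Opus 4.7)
The plan is to reduce the proposition, via the explicit representation in Lemma \ref{lem1a}, to uniform estimates on each of the nine elementary operators $h_m\mapsto W_{j,m}h_m$. First I would extend each boundary datum from $(0,T)$ to all of $\mathbb{R}$ by a bounded extension operator (Stein-type for the $H^{-1/3}$ components), gaining a multiplicative constant depending only on $T$, so that $\hat{h}_m^+(\rho)$ is the restriction of a Fourier transform in $\tau = \rho^3$. Once each $W_{j,m}$ is shown to be bounded from $H^{\sigma_m}(\mathbb{R})$ into $\mathcal{Z}_\infty$ with the claimed trace regularities (taking $\sigma_1=\sigma_3=-\tfrac13$, $\sigma_2=0$), restriction back to $(0,T)$ yields the proposition.

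The core of the argument is an asymptotic analysis of the symbol $\Delta^+_{j,m}(\rho)/\Delta^+(\rho)$. From \eqref{w4} the roots $\lambda_j^+(\rho)$ have real parts $0$, $\rho\sqrt{3}/2$, $-\rho\sqrt{3}/2$, so for large $\rho$ the determinant $\Delta^+(\rho)$ is dominated by terms containing $e^{\lambda_2 L}\sim e^{\rho\sqrt{3}L/2}$. Expanding $\Delta^+$ and the cofactors $\Delta^+_{j,m}$ and cancelling the growing exponentials against the factor $e^{\lambda_j^+(\rho)x}$, I expect to obtain, uniformly in $x\in[0,L]$, a polynomial bound of the form
\[
\Bigl|\frac{\Delta^+_{j,m}(\rho)}{\Delta^+(\rho)}\,e^{\lambda_j^+(\rho)x}\Bigr|\lesssim (1+\rho)^{\alpha_m},
\]
with $\alpha_1=\alpha_3=-1$ (a factor $\rho^{-1}$ is gained from the order of the Neumann-type data) and $\alpha_2=0$. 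The trace estimate then follows from the change of variable $\tau=\rho^3$ (so $3i\rho^2\,d\rho = i\,d\tau$) together with Plancherel in $t$: the Sobolev weight $(1+|\tau|)^{(1-r)/3}$ becomes $(1+\rho)^{1-r}$, which combined with the $\rho^r$ produced by $r$ spatial derivatives exactly matches the input norm, giving $\|\partial_x^r w(x,\cdot)\|_{H^{(1-r)/3}(0,T)} \le C\|\vec h\|_{\mathcal{H}_T}$ uniformly in $x$. The Kato-type bound $\|w\|_{L^2(0,T;H^1(0,L))}$ follows from the same Plancherel identity after integrating in $x$, and the continuity-in-time estimate $w\in C([0,T];L^2(0,L))$ is obtained either by dominated convergence applied directly to the oscillatory integral, or by a duality/energy identity that uses the already-proved trace bounds to absorb the boundary contributions.

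The main obstacle I anticipate is verifying that $\Delta^+(\rho)\ne 0$ for $\rho\ge 0$, so that the symbol has no low-frequency singularity and the quotient bound above is uniform. This reduces to proving that the only solution of the homogeneous problem $s\phi + \phi_{xxx}=0$ with $\phi_{xx}(0)=\phi_x(L)=\phi_{xx}(L)=0$ at $s=i\rho^3$ is the trivial one, which I would establish by multiplying by $\bar\phi$ and integrating by parts to extract a sign-definite identity, handling the degenerate case $\rho=0$ by direct computation of the three-dimensional kernel. Once the representation formula is shown to produce a solution in $\mathcal{Z}_T$ satisfying the required bound, uniqueness follows in the standard way: subtracting two solutions, multiplying the resulting equation by the difference, and integrating by parts yields only boundary terms that vanish because the three boundary data agree; a Gronwall inequality then forces the difference to vanish.
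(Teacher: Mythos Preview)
Your plan coincides with the paper's proof: decompose $w=\sum_{j,m}W_{j,m}h_m$ via Lemma~\ref{lem1a}, compute the large-$\rho$ asymptotics of each symbol $\Delta^+_{j,m}(\rho)/\Delta^+(\rho)$, and convert these into the $\mathcal{Z}_T$ and trace bounds by Plancherel in $t$ after the substitution $\tau=\rho^3$. The paper carries this out explicitly only for $w_1$ and declares $w_2,w_3$ analogous; your extension-to-$\mathbb{R}$ step and your concern about low-frequency behaviour of $\Delta^+$ are points the paper simply does not address.

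One correction matters, though. Your anticipated exponents $\alpha_1=\alpha_3=-1$, $\alpha_2=0$ are each one unit too large. A direct expansion gives $\Delta^+(\rho)\sim\rho^{5}e^{\sqrt3\rho L/2}$, whereas the cofactors satisfy $|\Delta^+_{j,m}(\rho)\,e^{\lambda_j^+(\rho)x}|\lesssim\rho^{3}e^{\sqrt3\rho L/2}$ for $m=1,3$ and $\lesssim\rho^{4}e^{\sqrt3\rho L/2}$ for $m=2$, uniformly in $x\in[0,L]$; hence
\[
\sup_{x\in[0,L]}\Bigl|\frac{\Delta^+_{j,m}(\rho)}{\Delta^+(\rho)}\,e^{\lambda_j^+(\rho)x}\Bigr|\lesssim\rho^{-2}\ (m=1,3),\qquad\lesssim\rho^{-1}\ (m=2).
\]
This extra factor of $\rho^{-1}$ is exactly what your Plancherel bookkeeping needs: it produces the weight $\mu^{-2/3}$ (resp.\ $\mu^{0}$) that matches $\|h_1\|_{H^{-1/3}},\|h_3\|_{H^{-1/3}}$ (resp.\ $\|h_2\|_{L^2}$). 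With the exponents you wrote, the argument would only close against $L^2$ boundary data in all three slots and would not yield the proposition as stated. Once you run the determinant computation carefully the rest of your outline goes through.
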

\begin{proof}
As we stated above, the solution $w$ can be written as
$$w(x,t)=w_1(x,t)+w_2(x,t)+w_3(x,t).$$
We just  prove  Proposition  \ref{l1} for $w_1$.  The proof for $w_2 $ and $w_3$ are similar.  Some straightforward calculations show that the asymptotic behavior of the ratios $\frac{\Delta^+_{j,m}(\rho)}{\Delta^+(\rho)}$ as $\rho\to+\infty$ are:
\begin{equation*}
\begin{array}{||l|l|l||}
\hline
\frac{\Delta^+_{1,1}(\rho)}{\Delta^+(\rho)} \sim \rho^{-2}e^{-\frac{\sqrt3}{2}\rho L}& \frac{\Delta^+_{2,1}(\rho)}{\Delta^+(\rho)} \sim \rho^{-2}e^{-\frac{\sqrt3}{2}\rho L} & \frac{\Delta^+_{3,1}(\rho)}{\Delta^+(\rho)} \sim \rho^{-2}e^{-\frac{\sqrt3}{2}\rho L}\\
\hline
\frac{\Delta^+_{1,2}(\rho)}{\Delta^+(\rho)} \sim \rho^{-1} & \frac{\Delta^+_{2,2}(\rho)}{\Delta^+(\rho)} \sim \rho^{-1} & \frac{\Delta^+_{3,2}(\rho)}{\Delta^+(\rho)} \sim \rho^{-1}\\
\hline
\frac{\Delta^+_{1,3}(\rho)}{\Delta^+(\rho)} \sim \rho^{-2} & \frac{\Delta^+_{2,3}(\rho)}{\Delta^+(\rho)} \sim \rho^{-2}e^{-\frac{\sqrt3}{2}\rho L} & \frac{\Delta^+_{3,3}(\rho)}{\Delta^+(\rho)} \sim \rho^{-2} \\
\hline
\end{array}
\label{behavior}
\end{equation*}
Since
$$w_1(x,t)=\frac{3}{\pi}\sum_{j=1}^3\int_0^{+\infty}e^{i\rho^3t}e^{\lambda_j^+(\rho)x}\frac{\Delta^+_{j,1}(\rho)}{\Delta^+(\rho)}\hat{h}^+_1(\rho)\rho^2d\rho,$$
we have
\begin{align*}
\sup_{t\in(0,T)}||w_1(\cdot,t)||^2_{L^2(0,L)} &  \leq C\int_0^{\infty}\mu^{-2/3}|\hat{h}^+_1(i\mu)|^2d\mu\\
&  \leq C||h_1||^2_{H^{-\frac{1}{3}}(\mathbb{R}^+)}\\
&  \leq C||\vec{h}||_{\mathcal{H}_T}.
\end{align*}
Furthermore, for $l=0,1,2,$ let us to consider $\theta(\mu)$ the real solution of $\mu=\rho^3$, $\rho\geq0$, thus
\begin{align*}
\partial^l_xw_1(x,t)&=\frac{3}{\pi}\sum_{j=1}^3\int_0^{+\infty}\left(\lambda^+_j(\rho)\right)^le^{i\rho^3t}e^{\lambda_j^+(\rho)x}\frac{\Delta^+_{j,1}(\rho)}{\Delta^+(\rho)}\hat{h}^+_1(\rho)\rho^2d\rho\\
& = \frac{3}{\pi}\sum_{j=1}^3\int_0^{+\infty}\left(\lambda^+_j(\theta(\mu))\right)^le^{i\rho^3t}e^{\lambda_j^+(\theta(\mu))x}\frac{\Delta^+_{j,1}(\theta(\mu))}{\Delta^+(\theta(\mu))}\hat{h}^+_1(i\mu)d\mu.
\end{align*}
Applying Plancherel theorem, in time $t$, yields that, for all $x\in(0,L)$
\begin{align*}
||\partial^{l}_xw_1(x,\cdot)||^2_{H^{\frac{1-l}{3}}(0,T)}&\leq C\sum_{j=1}^3\int_0^{+\infty}\mu^{\frac{2(1-l)}{3}}\left|(\lambda^+_j(\theta(\mu))^{\rho+1})e^{\lambda_j^+(\theta(\mu))x}\frac{\Delta^+_{j,1}(\theta(\mu))}{\Delta^+(\theta(\mu))}\hat{h}^+_1(i\mu)\right|^2d\mu\\
& \leq C\int_0^{+\infty}\mu^{-\frac{2}{3}}\left| \hat{h}_1(i\mu)\right|^2d\mu\\
& \leq C||h_1||^2_{H^{-\frac{1}{3}}(0,T)}\\
& \leq C||\vec{h}||^2_{\mathcal{H}_T},
\end{align*}
for $l=0,1,2$. Consequently
$$ \sup_{x\in(0,L)}||\partial^{l}_xw_1(x,\cdot)||_{H^{\frac{(1-l)}{3}}(0,T)}\leq C||\vec{h}||_{\mathcal{H}_T}, \text{ }l=0,1, 2$$
which ends the proof of Proposition \ref{l1} for $w_1$. 
\end{proof}

Next we turn to  consider the following initial boundary-value problem:
\begin{equation}
\left\{
\begin{array}
[c]{lll}%
v_t+v_{xxx}=f &  & x\in(0,L), t>0,\\
v_{xx}(0,t)=0,\text{ }v_x(L,t)=0,\text{ }v_{xx}(L,t)=0&  & t>0,\\
v(x,0)=\phi(x) & & x\in(0,L).
\end{array}
\right.  \label{w1}
\end{equation}
By semigroup theory,  for any $\phi \in L^2 (0,L)$ and $f\in L^1 (0,T; L^2 (0,L))$, it possess a unique  mild solution $u\in C([0,T]; L^2 (0,L))$ which can be written as
\[ u(x,t)= W_0 (t) \phi + \int ^t_0 W_0 (t-\tau ) f(\tau ) d\tau .\]
Here  $\{W_0(t)\} _{t\geq 0}$ is the $C_0$-semigroup in the space $L^2(0,L)$ generated by linear operator
$$Ag=-g''$$
with domain
$$\mathcal{D}(A)=\{g\in H^3(0,L):g''(0)=g'(L)=g''(L)=0\}.$$ In order to show that the solution $u$  of (\ref{w1}) also possesses the Kato smoothing property
\[ u\in L^2 (0,T; H^1 (0,L))\]
and hidden regularity (sharp Kato smoothing property),
\[ \partial ^k_x u\in L^{\infty}_x (0,L; H^{\frac{1-k}{3}} (0,T)), \ k=0,1,2, \]
we rewrite $u$ in terms of boundary integral operator $W_{bdr} (t)$ and  the solution of the following initial value problem (IVP) of  the  linear KdV equation posed on the whole line $\mathbb{R}$,
\begin{equation}\label{RP}
        \begin{cases}
            v_t+v_{xxx}=g(x,t) \qquad x \in \r,  \ t\in \r^+,\\
            v(x,0)=\psi (x). \
        \end{cases}
\end{equation}
Recall that  the solution$v(x,t) $ can be written as
\[ v(x,t) = W_{\r} (t) \psi   +\int ^t_0 W_{\r} (t-\tau) g(\tau ) d\tau, \]
where $\{W_{\r} (t)\} _{t\in \r}  $ is the $C_0$ group in the space $L^2 (0,L)$ generated by the operator $Kg=-g''' $ with domain $\mathcal{D} (K)=H^3 (\r)$.  The following results are well--known for solutions of (\ref{RP}) (see e.g. \cite{KePoVe}).
\begin{proposition}\label{l2}
 For any $\psi \in L^2 (\r)$ and $g\in L^1 (\r; L^2 (\r))$,   (\ref{RP}) admits a  unique mild solution $v\in C(\r; L^2 (\r))$ satisfying
 \[ \| v(\cdot, t)\|_{L^2 (\r)} = \|\psi \|_{L^2 (\r)} \quad \text{for any } \ t\in \r .\]
 Moreover, the the solution $v$ possesses the local Kato smoothing property $v\in L^2 (\r; H^1(-L,L)) $ for any $L>0$ with
 \[ \| v\|_{L^2 (\r, H^1 (-L,L))} \leq C_L\left (  \|\psi\|_{L^2 (\r)} + \|g\|_{L^1 (\r; L^2 (\r))}\right ) \]
 and the sharp Kato smoothing properties $\partial ^k_x v \in L^{\infty}_x(\r; H^{(k-1)/3}_t (\r))$  with
 \[ \|\partial ^k_xv\|_{L^{\infty}_x (\r; H^{(k-1)/3} (\r))} \leq C_k\left (  \|\psi\|_{L^2 (\r)} + \|g\|_{L^1 (\r; L^2 (\r))}\right ) \]
 for $k=0,1,2.$

\end{proposition}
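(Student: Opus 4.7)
The plan is to derive all three assertions from the explicit Fourier representation of the group $\{W_{\mathbb{R}}(t)\}_{t\in\mathbb{R}}$, namely
\[
W_{\mathbb{R}}(t)\psi(x) = \frac{1}{2\pi}\int_{\mathbb{R}} e^{ix\xi+it\xi^3}\,\widehat{\psi}(\xi)\,d\xi,
\]
and then extend to the inhomogeneous case by Duhamel's formula and Minkowski's inequality. The $L^2$ conservation is immediate from Plancherel, since $e^{it\xi^3}$ is a unit-modulus Fourier multiplier; strong continuity in $t$ gives $v\in C(\mathbb{R};L^2(\mathbb{R}))$. For the forced problem, $\|v(\cdot,t)\|_{L^2}\leq \|\psi\|_{L^2}+\int_0^t\|g(\cdot,\tau)\|_{L^2}d\tau$ by the same argument applied term by term.

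For the local Kato smoothing estimate, I would follow the classical approach: multiply the free equation $v_t+v_{xxx}=0$ by $x\chi(x)v$ for a smooth cutoff $\chi$ equal to $1$ on $[-L,L]$, integrate by parts in $x$, and integrate in $t\in[0,T]$. The boundary terms vanish at infinity by spatial decay (first establishing this on a dense class and then passing to the limit), and the resulting identity produces $\int_0^T\int_{-L}^L|v_x|^2\,dx\,dt$ on the left, controlled by $\|\psi\|_{L^2}^2$ plus the Duhamel contribution. This handles the $H^1(-L,L)$ norm; the $L^2$ bound on $v$ itself is already given by the conservation law.

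The sharp trace regularity is the heart of the matter. Fix $x\in\mathbb{R}$ and write, for $k=0,1,2$,
\[
\partial_x^k W_{\mathbb{R}}(t)\psi(x) = \frac{1}{2\pi}\int_{\mathbb{R}}(i\xi)^k e^{ix\xi}\,e^{it\xi^3}\,\widehat{\psi}(\xi)\,d\xi.
\]
Splitting the integral into $\xi>0$ and $\xi<0$ and substituting $\tau=\xi^3$ (so that $d\xi=\tfrac{1}{3}|\tau|^{-2/3}d\tau$) turns the $t$-dependence into a genuine Fourier integral in $\tau$. Applying Plancherel in $t$ with the weight $(1+|\tau|)^{2(k-1)/3}$ and changing variables back to $\xi$ yields
\[
\|\partial_x^k W_{\mathbb{R}}(t)\psi(x)\|_{H^{(k-1)/3}_t(\mathbb{R})}^2 \leq C\int_{\mathbb{R}}(1+|\xi|^3)^{2(k-1)/3}|\xi|^{2k-2}|\widehat{\psi}(\xi)|^2\,d\xi,
\]
with a constant independent of $x$ because the factor $e^{ix\xi}$ has modulus one. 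For high frequencies the right side is comparable to $\|\psi\|_{L^2}^2$, while the low-frequency piece $|\xi|\leq 1$ is controlled by the trivial pointwise bound $|\partial_x^k W_{\mathbb{R}}(t)\psi(x)|\lesssim \|\langle\xi\rangle^k\widehat{\psi}\|_{L^1(|\xi|\leq 1)}\lesssim \|\psi\|_{L^2}$.

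The main obstacle I anticipate is precisely the low-frequency singularity in the weight $|\xi|^{2k-2}$ when $k=0$: near $\xi=0$ the change of variables $\tau=\xi^3$ is degenerate and naive Plancherel fails. The remedy sketched above, a Littlewood--Paley style decomposition into $|\xi|\leq 1$ and $|\xi|>1$ with the former handled by the unitarity of $W_{\mathbb{R}}(t)$ on $L^2$, closes the argument. Once the homogeneous sharp Kato bound is established, the inhomogeneous version is obtained by writing $\int_0^t W_{\mathbb{R}}(t-\tau)g(\cdot,\tau)\,d\tau$ and applying Minkowski's inequality in $L^1_\tau(\mathbb{R};L^2_x(\mathbb{R}))$ against the $L^\infty_x H^{(k-1)/3}_t$ norm of the homogeneous part, yielding the stated bound and completing the proof of Proposition~\ref{l2}.
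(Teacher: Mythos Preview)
The paper does not prove this proposition at all: it is stated as a well-known result and attributed to Kenig, Ponce and Vega \cite{KePoVe}. Your sketch reproduces precisely the standard Kenig--Ponce--Vega argument (Plancherel for $L^2$ conservation, the multiplier/cutoff computation for local smoothing, and the change of variables $\tau=\xi^3$ in the temporal Fourier transform for the sharp trace bounds), so there is nothing to compare; you have simply supplied the proof that the paper delegates to the literature.

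One small point worth making explicit in your Duhamel step: when you apply Minkowski to $\int_0^t W_{\mathbb{R}}(t-\tau)g(\tau)\,d\tau$, the variable upper limit forces you to insert the factor $\mathbf{1}_{\{t>\tau\}}$ before taking the $H_t^{(k-1)/3}$ norm at fixed $\tau$. This is harmless here because for $k=0,1,2$ the Sobolev index $(k-1)/3$ lies in $(-\tfrac12,\tfrac12)$, and the characteristic function of a half-line is a bounded pointwise multiplier on $H^s(\mathbb{R})$ in that range; but it is the step that would fail for higher $k$, so it deserves a sentence.
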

For $\phi \in L^2 (0,L)$ and $f\in L^1(0,T; L^2 (0,L)$, let
\[ \tilde{\phi} (x) = \left \{ \begin{array}{ll} \phi (x) & \ if \ x\in (0,L), \\
0&\ if \ x\notin(0,L) \end{array} \right.  \]
and
\[ \tilde{f} (x,t)= \left \{ \begin{array}{ll} f(x,t)& \ if \ x\in (0,L) \times (0,T), \\
0&\ if \ x\notin(0,L)\times (0,T). \end{array} \right.  \]
We have  $\tilde{\phi}\in L^2 (\r)$,  $\tilde{f}\in L^1 (\r; L^2 (\r))$
and
\[ \|\tilde{\phi}\|_{L^2 (\r)}=\|\phi\|_{L^2 (0,L)}, \quad \|\tilde{f}\|_{L^1 (\r; L^2 (\r))}=\|f\|_{L^1 (0,T; L^2 (0,L))}.\]
Let
\[ v(x,t)=W_{\r} (t)\tilde{\phi} +\int ^t_0 W_{\r} (t-\tau )\tilde{f}(\tau) d\tau \]
and
\[ q_1 (t)=v_{xx}(0,t), \quad q_2 (t)=v_x(L,t), \quad q_3 (t) =v_{xx} (L,t), \quad \vec{q} (t)=(q_1 (t), q_2 (t), q_3 (t)) .\]
Then $v(x,t) $ solves  \eqref{RP} with $\psi$ and $g$ replaced by $\tilde{\phi} $ and $\tilde{f}$, respectively, and
\[ u(x,t) =W_{\r} (t)\tilde{\phi} +\int ^t_0 W_{\r} (t-\tau )\tilde{f}(\tau) d\tau- W_{bdr} (t)\vec{q} \]
solves the IBVP (\ref{w1}).
 The following proposition then follows from Propositions \ref{l1} and \ref{l2}.
 \begin{proposition}\label{l3}
 Let $T>0$ be given.  For any $\phi \in L^2 (0,L)$ and $f\in L^1 (0,T; L^2 (0,L))$,  the IBVP (\ref{w1}) admits   a unique  mild solution $v\in C(\r; L^2 (0,L))$ satisfying
 \[ \| v(\cdot, t)\|_{L^2 (0,L)} \leq C\|\psi \|_{L^2 (0,L)} \quad \text{for any } \ t\in \r .\]
 Moreover, the the solution $v$ possesses the local Kato smoothing property $v\in L^2 (0,T; H^1(0,L)) $   with
 \[ \| v\|_{L^2 (0,T; H^1 (0,L))} \leq C\left (  \|\phi\|_{L^2 (0,L)} + \|f\|_{L^1 (0,T; L^2 (0,L))}\right ) \]
 and the sharp Kato smoothing properties $\partial ^k_x v \in L^{\infty}_x(0,L; H^{(k-1)/3} (0,T))$  with
 \[ \|\partial ^k_xv\|_{L^{\infty}_x (0,L; H^{(k-1)/3} (0,T))} \leq C_k\left (  \|\phi\|_{L^2 (0,L)} + \|f\|_{L^1 (0,T; L^2 (0,L))}\right ) \]
 for $k=0,1,2.$

 \end{proposition}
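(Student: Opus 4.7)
The plan is to realize the solution of \eqref{w1} as the difference between the free KdV solution on $\r$ (driven by zero-extended data) and a boundary correction produced by $W_{bdr}$, exactly along the lines sketched in the paragraph preceding the statement. Once this decomposition is set up, every required bound becomes a direct consequence of Propositions \ref{l1} and \ref{l2}.

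Concretely, first extend $\phi$ and $f$ by zero to $\tilde{\phi}\in L^2(\r)$ and $\tilde{f}\in L^1(0,T; L^2(\r))$, and form
\[
v(x,t) := W_{\r}(t)\tilde{\phi} + \int_0^t W_{\r}(t-\tau)\tilde{f}(\tau)\,d\tau.
\]
By Proposition \ref{l2}, $v$ lies in $C([0,T]; L^2(\r))$ with the stated mass control, and, crucially, enjoys the sharp Kenig--Ponce--Vega trace regularity $\partial^k_x v \in L^{\infty}_x(\r; H^{(k-1)/3}(\r))$ for $k=0,1,2$. In particular the three spatial traces
\[
q_1(t):=v_{xx}(0,t),\qquad q_2(t):=v_x(L,t),\qquad q_3(t):=v_{xx}(L,t),
\]
are well-defined distributions with $\vec{q}=(q_1,q_2,q_3)\in\mathcal{H}_T$ and
\[
\|\vec{q}\|_{\mathcal{H}_T}\leq C\bigl(\|\phi\|_{L^2(0,L)}+\|f\|_{L^1(0,T;L^2(0,L))}\bigr).
\]

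With $\vec{q}$ in hand, I set
\[
u(x,t) := v(x,t)\big|_{(0,L)\times(0,T)} - [W_{bdr}(t)\vec{q}](x).
\]
The correction $W_{bdr}\vec{q}$ solves \eqref{w1b} with zero initial data and boundary data $\vec{q}$, so its three boundary traces exactly cancel those of $v$ at $x=0$ and $x=L$, giving $u$ the homogeneous Neumann-type conditions of \eqref{w1}; the initial condition $u(\cdot,0)=\phi$ is preserved because $[W_{bdr}(0)\vec{q}](x)\equiv 0$; and $u_t+u_{xxx}=f$ on $(0,L)\times(0,T)$ holds by linearity. The four claimed bounds (continuity in time in $L^2$, Kato smoothing $v\in L^2(0,T;H^1(0,L))$, and the sharp trace estimates for $\partial^k_x u$, $k=0,1,2$) are then obtained simply by restricting the Proposition \ref{l2} estimates for $v$ to the strip $(0,L)\times(0,T)$ and adding the Proposition \ref{l1} estimates for $W_{bdr}\vec{q}$. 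Uniqueness follows from the $C_0$-semigroup generated by the operator $A$ on $L^2(0,L)$ introduced just before the statement: the difference of any two mild solutions of \eqref{w1} with the same data is a mild solution of the fully homogeneous problem and hence identically zero.

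The substantive step in this program is the trace control. Plain energy methods would not allow one to make sense of $\partial^k_x v$ at $x=0$ or $x=L$ as time-distributions, let alone estimate them in negative Sobolev spaces with the correct exponents. What rescues the argument is precisely the sharp Kato-smoothing half of Proposition \ref{l2}, imported from Kenig--Ponce--Vega, which promotes the spatial traces of $\partial^k_x v$ to genuine $H^{(k-1)/3}_t(\r)$ elements controlled by $\|\phi\|_{L^2(0,L)}+\|f\|_{L^1_tL^2_x}$. Once these traces are known to lie in $\mathcal{H}_T$, Proposition \ref{l1} is tailor-made to absorb them, and only routine bookkeeping of the Sobolev exponents remains.
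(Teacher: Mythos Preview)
Your argument is correct and matches the paper's own approach exactly: the paper sets up the same decomposition $u = v|_{(0,L)} - W_{bdr}\vec{q}$ in the paragraph immediately preceding the statement and then simply asserts that the proposition follows from Propositions~\ref{l1} and~\ref{l2}. You have supplied precisely the details the paper leaves implicit.
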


Combining Proposition \ref{l2} and Proposition \ref{l3} together leads the result for the following IBVP:
\begin{equation}
\left\{
\begin{array}
[c]{lll}%
v_t+v_{xxx}=f&  & \text{ in } (0,L)\times(0,T),\\
v_{xx}(0,t)=h_1,\text{ }v_x(L,t)=h_2,\text{ }v_{xx}(L,t)=h_3&  & \text{ in }(0,T),\\
v(x,0)=v_0(x)& & \text{ in }(0,L).
\end{array}
\right.  \label{w6}
\end{equation}
\begin{proposition}\label{p3}
Let $T>0$ be given, for any $v_0\in L^2(0,L)$, $f\in L^1(0,T;L^2(0,L))$ and
$$\vec{h}:=(h_1,h_2,h_3)\in\mathcal{H}_T=H^{-\frac{1}{3}}(0,T)\times L^2(0,T)\times H^{-\frac{1}{3}}(0,T),$$
the IBVP \eqref{w6} admits a unique solution
$$v\in\mathcal{Z}_T:=C([0,T];L^2(0,L))\cap L^2(0,T;H^1(0,L))$$
with
\[ \partial ^k_x v\in L^{\infty}_x (0,L; H^{\frac{1-k}{3}} (0,T)\quad \text{for} \ k=0,1,2 .\]
Moreover, there exists $C>0$ such that
$$||v||_{\mathcal{Z}_T} +\sum ^{2}_{k=0} \| v\|_{L^{\infty}_x (0,L; H^{\frac{1-k}{3}} (0,T)} \leq C\left(||v_0||_{L^2(0,L)}+||\vec{h}||_{\mathcal{H}_T}+||f||_{L^1(0,T;L^2(0,L))}\right).$$
\end{proposition}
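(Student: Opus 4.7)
The strategy is pure linear superposition built on Propositions \ref{l1} and \ref{l3}. Since both the KdV operator and all three boundary conditions in \eqref{w6} are linear, I would write the candidate solution as $v = v_1 + v_2$, where $v_1 \in \mathcal{Z}_T$ is the mild solution of \eqref{w1} (homogeneous boundary data) associated to the pair $(v_0,f)$, supplied by Proposition \ref{l3}, and $v_2 = W_{bdr}\vec{h} \in \mathcal{Z}_T$ is the solution of \eqref{w1b} (zero initial data, zero forcing) produced by Proposition \ref{l1}. By linearity the sum $v$ solves the PDE, matches the initial datum $v_0$, and realizes the boundary data $\vec{h}$, which gives existence.

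The estimate is then immediate from the triangle inequality combined with the two a priori bounds already in hand. Proposition \ref{l3} supplies
\[ \|v_1\|_{\mathcal{Z}_T} + \sum_{k=0}^{2}\|\partial_x^k v_1\|_{L^\infty_x(0,L;H^{(1-k)/3}(0,T))} \le C\bigl(\|v_0\|_{L^2(0,L)}+\|f\|_{L^1(0,T;L^2(0,L))}\bigr), \]
while Proposition \ref{l1} gives the analogous bound for $v_2$ with right-hand side $C\|\vec{h}\|_{\mathcal{H}_T}$. Adding these yields exactly the inequality claimed in the proposition.

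For uniqueness I would argue by a standard energy computation. The difference $w \in \mathcal{Z}_T$ of two solutions satisfies \eqref{w6} with $v_0=0$, $f=0$, $\vec{h}=0$; after a routine smooth-data approximation to justify the manipulations, multiplying $w_t + w_{xxx} = 0$ by $w$ and integrating over $(0,L)$ makes every boundary term at $x=L$ vanish thanks to $w_x(L,t)=w_{xx}(L,t)=0$, and the term $w(0,t)w_{xx}(0,t)$ at $x=0$ vanishes thanks to $w_{xx}(0,t)=0$, leaving the dissipative identity
\[ \tfrac{d}{dt}\|w(\cdot,t)\|_{L^2(0,L)}^2 + |w_x(0,t)|^2 = 0, \]
which forces $w \equiv 0$.

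The one point that deserves attention, rather than a genuine obstacle, is that when $h_1,h_3 \in H^{-1/3}(0,T)$ the traces $\partial_x^k v_2(0,\cdot)$ and $\partial_x^k v_2(L,\cdot)$ must be interpreted in the distributional sense made precise by the explicit Laplace-transform representation behind Proposition \ref{l1}; that is exactly what that proposition was set up to control. Once this interpretation is granted, the whole proof reduces to the bookkeeping above, and Proposition \ref{p3} emerges as a clean synthesis of Propositions \ref{l1} and \ref{l3}.
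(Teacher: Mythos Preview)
Your proposal is correct and is exactly the argument the paper has in mind: the paper states only that Proposition~\ref{p3} follows by ``combining'' the previous propositions, and your decomposition $v=v_1+v_2$ with $v_1$ coming from Proposition~\ref{l3} (initial data and forcing, homogeneous boundary) and $v_2=W_{bdr}\vec h$ from Proposition~\ref{l1} (nonhomogeneous boundary, zero data) is precisely that combination. Your energy identity for uniqueness is also the standard one for this set of boundary conditions and is correct as written.
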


Next proposition states similar hidden (or sharp trace) regularities for the linear system
\begin{equation}
\left\{
\begin{array}
[c]{lll}%
y_t+y_x+y_{xxx}=f &  &x\in(0,L), t>0\\
y(0,t)=g_1(t),\text{ }y_x(L,t)=g_2(t),\text{ }y_{xx}(L,t)=g_3(t) &  &t>0,\\
y(x,0)=y_0(x)& &x\in(0,L),
\end{array}
\right.  \label{w8}
\end{equation}
associated to \eqref{2}.
\begin{proposition}\label{p5}
Let $T>0$ be given, for any $y_0\in L^2(0,L)$, $f\in L^1(0,T;L^2(0,L))$ and
$$\vec{g}:=(g_1,g_2,g_3)\in\mathcal{G}_T=H^{\frac{1}{3}}(0,T)\times L^2(0,T)\times H^{-\frac{1}{3}}(0,T),$$
the IBVP \eqref{w8} admits a unique solution $y\in\mathcal{Z}_T$. Moreover, there exists $C>0$ such that
$$||y||_{\mathcal{Z}_T}\leq C\left(||y_0||_{L^2(0,L)}+||\vec{g}||_{\mathcal{G}_T}+||f||_{L^1(0,T;L^2(0,L))}\right).$$
In addition, the solution $y$ possesses the following sharp trace estimates
\begin{equation}
\sup_{x\in(0,L)}||\partial^r_xy(x,\cdot)||_{H^{\frac{1-r}{3}}(0,T)}\leq C_r\left(||y_0||_{L^2(0,L)}+||\vec{g}||_{\mathcal{G}_T}+||f||_{L^1(0,T;L^2(0,L))} \right),
\label{w9}
\end{equation}
for $r=0,1,2$.
\end{proposition}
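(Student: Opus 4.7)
The plan is to import the scheme used for Proposition~\ref{p3}: decompose the IBVP \eqref{w8} into a pure boundary-forcing piece, handled by an explicit Laplace-transform representation, and an initial-plus-interior piece handled by extending $y_0$ and $f$ to the whole line and invoking the sharp Kenig--Ponce--Vega estimates of Proposition~\ref{l2}. Two new features must be tracked: the transport term $y_x$ modifies the characteristic equation to $s+\lambda+\lambda^3=0$, and the Dirichlet/Neumann mixed boundary conditions carry the anisotropic Sobolev indices $(s_1,s_2,s_3)=(1/3,0,-1/3)$ reflected in $\mathcal{G}_T$.

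\textbf{Step 1 (pure boundary piece).} Setting $y_0=f=0$ and applying the Laplace transform in $t$ reduces the problem to a three-dimensional ODE with fundamental solutions $e^{\lambda_j(s)x}$ determined by $s+\lambda+\lambda^3=0$. After $s=i\rho^3$ the three roots $\lambda_j^+(\rho)$ agree with \eqref{w4} modulo $O(\rho^{-1})$ for large $\rho$. The boundary conditions $\hat y(0)=\hat g_1$, $\hat y_x(L)=\hat g_2$, $\hat y_{xx}(L)=\hat g_3$ give a $3\times 3$ linear system whose coefficient matrix differs from $A$ in \eqref{w3} only in its first row, now $(1,1,1)$. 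Cramer's rule produces the nine resolvent ratios $\tilde\Delta^+_{j,m}/\tilde\Delta^+$; a routine asymptotic analysis as $\rho\to\infty$, substitution $\mu=\rho^3$, and Plancherel in $t$ (exactly as in the proof of Proposition~\ref{l1}) then yield
\begin{equation*}
\sup_{x\in[0,L]}\|\partial_x^r y_m(x,\cdot)\|_{H^{(1-r)/3}(0,T)}\leq C\|g_m\|_{H^{s_m}(0,T)},\qquad r=0,1,2,
\end{equation*}
for $m=1,2,3$, from which the $\mathcal{Z}_T$ bound also follows via Sobolev embedding in $t$ together with the case $r=1$.

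\textbf{Step 2 (initial datum and forcing).} Extend $y_0$ and $f$ by zero outside $(0,L)$ and solve $v_t+v_x+v_{xxx}=\tilde f$, $v(\cdot,0)=\tilde y_0$ on $\mathbb{R}$. Because $\partial_x$ is a bounded skew-adjoint perturbation of the generator (equivalently, the Galilean change of variable $\tilde x=x-t$ removes the transport term), Proposition~\ref{l2} applies verbatim to $\partial_t+\partial_x+\partial_x^3$ and gives $\partial_x^k v\in L^\infty_x(\mathbb{R};H^{(k-1)/3}(\mathbb{R}))$ for $k=0,1,2$. Reading off $q_1(t):=v(0,t)$, $q_2(t):=v_x(L,t)$, $q_3(t):=v_{xx}(L,t)$ produces $\vec q\in\mathcal{G}_T$ with the correct bound, and $y:=v|_{(0,L)\times(0,T)}+\mathcal{Y}_{\mathrm{bdr}}(\vec g-\vec q)$, where $\mathcal{Y}_{\mathrm{bdr}}$ is the boundary solution operator constructed in Step~1, solves \eqref{w8} and satisfies all required estimates.

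\textbf{Main obstacle.} The delicate point is Step~1: one must (i) rule out real zeros of $\tilde\Delta^+(\rho)$ on $(0,\infty)$, and (ii) identify the precise leading-order behavior of each of the nine ratios $\tilde\Delta^+_{j,m}/\tilde\Delta^+$ so that in the Plancherel step the growth in $\rho$ exactly cancels the weight $\rho^{2(1-r)/3-2s_m}$ coming from the pair (target norm, source norm). The shift of the first row from $(\lambda_1^2,\lambda_2^2,\lambda_3^2)$ to $(1,1,1)$ drops two powers of $\rho$ from the denominator, which is precisely what forces $g_1$ to live in $H^{1/3}$ rather than $H^{-1/3}$; keeping the exponential factors $e^{-\tfrac{\sqrt{3}}{2}\rho L}$ attached to the correct columns requires the same careful bookkeeping as in the proof of Proposition~\ref{l1}.
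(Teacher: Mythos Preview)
The paper does not actually prove Proposition~\ref{p5}; it simply cites \cite{BonaSunZhang,KrRiZha} for the proof. Your proposal reproduces precisely the strategy of those references, which is also the same scheme the paper spells out in detail for the Neumann analogue (Propositions~\ref{l1}--\ref{p3}): Laplace-transform representation for the pure boundary piece with Cramer's rule and asymptotics of the nine ratios, plus zero-extension to $\mathbb{R}$ and the Kenig--Ponce--Vega sharp Kato smoothing for the initial/interior piece. So your approach is correct and coincides with the intended one.

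Two small remarks. First, in Step~2 you write $\partial_x^k v\in L^\infty_x H^{(k-1)/3}_t$; the correct index is $(1-k)/3$ (the paper has the same typo in Proposition~\ref{l2}, corrected in Proposition~\ref{p3}), and this is what makes $q_1=v(0,\cdot)$ land in $H^{1/3}$ as required by $\mathcal{G}_T$. Second, concerning your ``main obstacle'' (i): real zeros of $\tilde\Delta^+$ on $(0,\infty)$ are harmless here because the representation is first derived for $\operatorname{Re} s=r>0$, where the determinant cannot vanish by well-posedness of the semigroup, and the resulting formula is then continued to $r=0$; any isolated real $\rho$-zeros are removable in the integral and do not affect the Plancherel estimate.
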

The proof of Proposition \ref{p5} can be found in \cite{BonaSunZhang,KrRiZha}.
\begin{remark}
Systems \eqref{w6} and \eqref{w8} are equivalent in the following sense:
\smallskip

Given $\{u_0,f,h_1,h_2,h_3\}$ one can find $\{y_0,f,g_1,g_2,g_3\}$ such that the corresponding solution $u$ of \eqref{w6} is exactly the same as the corresponding $y$ for the system \eqref{w8} and vice versa.

\medskip
In fact, for given $u_0\in L^2(0,L)$, $f\in L^1(0,T;L^2(0,L))$ and $\vec{h}\in\mathcal{H}_T$, system \eqref{w6} admits a unique solution $u\in\mathcal{Z}_T$. Let $y_0=u_0$ and set
$$g_1(t)=h_1(t), \text{ \ \ }g_3(t)=h_2(t), \text{ \ \ }g_2(t)=h_3(t).$$
Then, according to Proposition \ref{l3}, we have $\vec{g}\in\mathcal{G}_t$. Due to the uniqueness of IBVP \eqref{w8}, with the selection $\{y_0,f,g_1,g_2,g_3\}$, the corresponding solution $y\in\mathcal{Z}_T$ of \eqref{w8} must be equal to $u$ since $u$ also solves \eqref{w8} with the given auxiliary data $\{y_0,f,g_1,g_2,g_3\}$. On the other hand, for any given $y_0\in L^2(0,L)$, $f\in L^1(0,T;L^2(0,L))$ and $\vec{g}\in\mathcal{G}_T$, let $y\in\mathcal{Z}_T$ be the corresponding solution of the system \eqref{w8}. From \eqref{w9}, we have $y_{xx}(0,\cdot)$ and $y_{xx}(L,\cdot)\in H^{-\frac{1}{3}}(0,T)$. Thus, if set $u_0=y_0$ and
$$h_1(t)=u_{xx}(0,t), \text{ \ \ }h_2(t)=g_3(t), \text{ \ \ }h_3(t)=u_{xx}(L,T),$$
then $\vec{h}\in\mathcal{H}_T$ and the corresponding solution $u\in\mathcal{Z}_T$ of \eqref{w6} must be equal to $y$ which also solves \eqref{w6} with the auxiliary data $(u_0,f,\vec{h})$.
\end{remark}

Finally, we are at the stage to prove the well-posedness of the  of the following nonlinear system,
\begin{equation}
\left\{
\begin{array}
[c]{lll}%
u_t+u_x+uu_x+u_{xxx}=0 &  & \text{ in } (0,L)\times(0,T),\\
u_{xx}(0,t)=h_1(t),\text{ }u_x(L,t)=h_2(t),\text{ }u_{xx}(L,t)=h_3(t) &  & \text{ in }(0,T),\\
u(x,0)=u_0(x):=\phi(x)  & & \text{ in }(0,L).
\end{array}
\right.  \label{3.1}
\end{equation}
For given $T>0$, define
\[ X_{T}:= L^2  (0,L)\times H^{-\frac{1}{3}}(0,T)\times
L^2  (0,T)\times H^{-\frac{1}{3}}(0,T)\]
and
\[
\mathcal{Z}_T:=C([0,T];L^2 (0,L))\cap L^2 (0,T; H^{1}(0,L). \]



\begin{theorem} Let $T>0$ and $r>0$ be given. There exists a $T^*\in
(0,T]$ such that for any $(\phi , \vec{h}) \in X_{
T}$ with
\[ \| (\phi, \vec{h})\|_{X_T} \leq r, \] the IBVP (\ref{3.1}) admits a unique solution
\[ u\in \mathcal{Z}_{T^*}.\]
In addition, the solution $u$  possesses the hidden regularities
\[ \partial ^k_x u\in L^ {\infty}_x (0,L; H^{\frac{1-k}{3}} (0,T^*)), \quad k=0,1,2\]
and, moreover, the corresponding solution map is Lipschitz continuous.
\end{theorem}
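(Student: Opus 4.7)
The plan is a Banach fixed-point argument carried out directly in $\mathcal{Z}_{T^*}$, using Proposition \ref{p3} to control the linear IBVP and one-dimensional Sobolev estimates to handle the nonlinearity. Given $(\phi,\vec h)\in X_T$ with $\|(\phi,\vec h)\|_{X_T}\leq r$, define an operator $\Gamma$ on $\mathcal{Z}_{T^*}$ by letting $\Gamma(v)$ be the unique solution of the linear IBVP \eqref{w6} furnished by Proposition \ref{p3} with source $f=-v_x-vv_x$, boundary triple $\vec h$, and initial datum $\phi$. A fixed point of $\Gamma$ solves \eqref{3.1}, and the hidden-regularity bound
$$\|\Gamma(v)\|_{\mathcal{Z}_{T^*}}+\sum_{k=0}^{2}\|\partial_x^k\Gamma(v)\|_{L^\infty_x(0,L;H^{\frac{1-k}{3}}(0,T^*))}\leq C\bigl(\|\phi\|_{L^2(0,L)}+\|\vec h\|_{\mathcal{H}_{T^*}}+\|v_x+vv_x\|_{L^1(0,T^*;L^2(0,L))}\bigr)$$
comes along with Proposition \ref{p3} automatically, so these regularities transfer to the fixed point for free.

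The crucial ingredient is the classical one-dimensional nonlinear estimate. By Cauchy--Schwarz in time, $\|v_x\|_{L^1(0,T^*;L^2)}\leq (T^*)^{1/2}\|v\|_{\mathcal{Z}_{T^*}}$, while the Gagliardo--Nirenberg inequality $\|v(\cdot,t)\|_{L^\infty(0,L)}\leq C\|v(\cdot,t)\|_{L^2}^{1/2}\|v(\cdot,t)\|_{H^1}^{1/2}+C\|v(\cdot,t)\|_{L^2}$ combined with Hölder in $t$ yields
$$\|vv_x\|_{L^1(0,T^*;L^2)}\leq\int_{0}^{T^*}\|v\|_{L^\infty(0,L)}\|v_x\|_{L^2(0,L)}\,dt\leq C(T^*)^{1/4}\|v\|_{\mathcal{Z}_{T^*}}^{2}.$$
Setting $R=2Cr$, the linear bound from Proposition \ref{p3} becomes $\|\Gamma(v)\|_{\mathcal{Z}_{T^*}}\leq R/2+C_1\bigl((T^*)^{1/2}+(T^*)^{1/4}R\bigr)R$ on the ball $B_R=\{v\in\mathcal{Z}_{T^*}:\|v\|_{\mathcal{Z}_{T^*}}\leq R\}$. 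Choosing $T^*\in(0,T]$ small enough (depending only on $r$ and the universal constants) absorbs the second term into $R/2$, so $\Gamma(B_R)\subset B_R$.

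Contraction follows from the very same estimates applied to differences: splitting $v_1v_{1,x}-v_2v_{2,x}=(v_1-v_2)v_{1,x}+v_2(v_1-v_2)_x$ and applying Gagliardo--Nirenberg to each factor gives
$$\|\Gamma(v_1)-\Gamma(v_2)\|_{\mathcal{Z}_{T^*}}\leq C_2\bigl((T^*)^{1/2}+(T^*)^{1/4}R\bigr)\|v_1-v_2\|_{\mathcal{Z}_{T^*}},$$
which is a strict contraction after shrinking $T^*$ once more. The Banach fixed-point theorem produces a unique $u\in B_R$ solving \eqref{3.1}, and Lipschitz continuity of $(\phi,\vec h)\mapsto u$ follows by running the same difference argument on two pairs of data $(\phi_i,\vec h_i)$ and moving the contractive nonlinear term to the left-hand side.

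The main obstacle is really a bookkeeping one rather than a conceptual one: one must fix a convention for the $H^{-1/3}(0,T^*)$ norms so that $\|\vec h\|_{\mathcal{H}_{T^*}}\leq\|\vec h\|_{\mathcal{H}_T}$ holds with a constant independent of $T^*\in(0,T]$ (for instance, by using the restriction norm induced by a fixed extension operator into $H^{-1/3}(\mathbb{R})$). With this convention in place, all constants $C,C_1,C_2$ depend only on $T$ and $r$, the smallness of $T^*$ may be chosen uniformly for data of norm at most $r$, and the fixed-point argument closes cleanly.
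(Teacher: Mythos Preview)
Your argument is correct and is precisely the standard contraction-mapping proof that the paper defers to \cite{BonaSunZhang,KrZh}; the paper itself omits the details entirely, so your write-up simply fills in what the references contain. The only point worth flagging is the bookkeeping issue you already identified about the behavior of the $H^{-1/3}(0,T^*)$ restriction norms as $T^*$ shrinks, which you resolve in the natural way.
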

\begin{proof} 
Since the proof is similar to that  presented in \cite{BonaSunZhang,KrZh}, we will omit it.
\end{proof}

\section{Boundary controllability\label{Sec3}}
In this section, we   study exact  boundary controllability  of the system
\begin{equation}
\left\{\begin{array}[c]{lll}%
u_t+u_x+uu_x+u_{xxx}=0 &  & \text{ in } (0,L)\times(0,T),\\
u_{xx}(0,t)=0,\text{ }u_x(L,t)=h(t),\text{ }u_{xx}(L,t)=0 &  & \text{ in }(0,T),\\
u(x,0)=u_0(x) & & \text{ in }(0,L),
\end{array}
\right.  \label{a1.1-1}
\end{equation}
around a constant steady state $u\equiv c$.   As it is easy to see by letting  $u=v+c$, it is equivalent to study the exact boundary controllability of the following system
\begin{equation}
\left\{\begin{array}[c]{lll}%
v_t+(c+1)v_x+vv_x+v_{xxx}=0 &  & \text{ in } (0,L)\times(0,T),\\
v_{xx}(0,t)=0,\text{ }v_x(L,t)=h(t),\text{ }v_{xx}(L,t)=0 &  & \text{ in }(0,T)\\
v(x,0)=v_0(x) & & \text{ in }(0,L),
\end{array}
\right.  \label{a1.1-2}
\end{equation}
around the origin $0$  instead.
We have the following exact controllability results for the system (\ref{a1.1-2}).
\begin{theorem}
Let $T>0$, $c+1\neq0$ and
\begin{equation}
L\notin\mathcal{R}_c:=\left\{  \frac{2\pi}{\sqrt{3(c+1)}}\sqrt{k^{2}+kl+l^{2}}
\,:k,\,l\,\in\mathbb{N}^{\ast}\right\}\cup\left\{\frac{k\pi}{\sqrt{c+1}}:k\in\mathbb{N}^{\ast}\right\}
\label{critical_c}
\end{equation}
be given. Then there exists a $\delta>0$ such that for any
$v_0,v_T\in L^2(0,L)$ with
$$||v_0||_{L^2(0,L)}+||v_T||_{L^2(0,L)}\leq\delta,$$
one can find $h\in L^2(0,T)$ such that the system  (\ref{a1.1-2})
admits unique solution
$$v\in C([0,T];L^2(0,L))\cap L^2(0,T;H^1(0,L))$$
satisfying
$$v(x,0)=v_0(x), \text{  } \text{  } \text{  }v(x,T)=v_T(x)  \text{  } \text{ in } (0,L).$$
\label{th7_int}
\end{theorem}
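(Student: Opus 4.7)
The plan is to pass from the linear controllability result (Theorem \ref{th1.1}, in the shifted form giving Theorem \ref{th7_int}'s critical set $\mathcal{R}_c$) to the nonlinear one by a standard fixed-point argument, with the nonlinearity $vv_x$ treated as a forcing term whose $L^1_tL^2_x$ norm is controlled by the Kato smoothing in $\mathcal{Z}_T$.

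First, I would record a bounded linear control operator for the linearized problem. Since $c+1\neq 0$ and $L\notin\mathcal{R}_c$, Theorem \ref{th1.1} (applied with $\beta=c$) shows that the linear system with source $f\in L^1(0,T;L^2(0,L))$,
\begin{equation*}
v_t+(c+1)v_x+v_{xxx}=f,\quad v_{xx}(0,t)=0,\ v_x(L,t)=h(t),\ v_{xx}(L,t)=0,\quad v(\cdot,0)=v_0,
\end{equation*}
is exactly controllable in $L^2(0,L)$. Together with Proposition \ref{p3}, this gives a bounded linear map
\[
\Phi:L^2(0,L)\times L^2(0,L)\times L^1(0,T;L^2(0,L))\longrightarrow L^2(0,T),
\]
$(v_0,v_T,f)\mapsto h=\Phi(v_0,v_T,f)$, such that the resulting solution satisfies $v(\cdot,T)=v_T$, and
\[
\|v\|_{\mathcal{Z}_T}+\|h\|_{L^2(0,T)}\le C\bigl(\|v_0\|_{L^2(0,L)}+\|v_T\|_{L^2(0,L)}+\|f\|_{L^1(0,T;L^2(0,L))}\bigr).
\]

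Next I would set up a contraction mapping. Given $w\in\mathcal{Z}_T$, let $\Gamma(w)$ be the unique solution $v\in\mathcal{Z}_T$ of the linearized Neumann IBVP with forcing $f=-ww_x$, initial datum $v_0$, and control $h=\Phi(v_0,v_T,-ww_x)$, so that $v(\cdot,T)=v_T$ automatically. The standard KdV bilinear estimate, which follows from the Kato smoothing $\mathcal{Z}_T\hookrightarrow L^2(0,T;H^1(0,L))$ together with $C([0,T];L^2(0,L))$ and Sobolev embedding, gives
\begin{equation*}
\|ww_x\|_{L^1(0,T;L^2(0,L))}\le C T^{1/2}\|w\|_{\mathcal{Z}_T}^2,\qquad \|w_1(w_1)_x-w_2(w_2)_x\|_{L^1(0,T;L^2(0,L))}\le C T^{1/2}\bigl(\|w_1\|_{\mathcal{Z}_T}+\|w_2\|_{\mathcal{Z}_T}\bigr)\|w_1-w_2\|_{\mathcal{Z}_T}.
\end{equation*}
Combining this with the bound on $\Phi$ and Proposition \ref{p3}, for $w$ in the closed ball $B_R\subset\mathcal{Z}_T$ one obtains
\[
\|\Gamma(w)\|_{\mathcal{Z}_T}\le C\bigl(\|v_0\|_{L^2}+\|v_T\|_{L^2}\bigr)+CT^{1/2}R^2,\qquad \|\Gamma(w_1)-\Gamma(w_2)\|_{\mathcal{Z}_T}\le CT^{1/2}R\,\|w_1-w_2\|_{\mathcal{Z}_T}.
\]
Choosing $R$ small and $\delta$ smaller still so that $C\delta\le R/2$ and $CT^{1/2}R\le 1/2$ (with $T$ replaced by a smaller time if necessary, then iterating in time is not needed here because the problem is genuinely local), $\Gamma$ becomes a contraction on $B_R$. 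Its unique fixed point is the desired solution of \eqref{a1.1-2}.

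The main obstacle in the plan is the quadratic bilinear estimate for $ww_x$ in $L^1(0,T;L^2(0,L))$ in the Neumann setting, because the usual Dirichlet multiplier arguments need to be revisited; however this is already secured by Proposition \ref{p3}, which furnishes the full $\mathcal{Z}_T$ regularity (in particular $w\in L^2(0,T;H^1(0,L))$ and $w\in L^\infty(0,T;L^2(0,L))$) for the Neumann linear IBVP with $L^2$ initial data and $L^2$ Neumann control. After that, everything reduces to the well-posedness theory developed in Section \ref{Sec2} and the linear exact controllability of Theorem \ref{th1.1}, and the proof follows the template already used by Rosier for the Dirichlet case and recalled in Theorem B.
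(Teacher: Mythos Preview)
Your argument is correct and follows essentially the same route as the paper: reduce to the linear exact controllability (here Proposition~\ref{control}, i.e.\ Theorem~\ref{th1.1} with $\beta=c$), encode it as a bounded linear control operator, and close a contraction in $\mathcal{Z}_T$ using the bilinear bound $\|ww_x\|_{L^1(0,T;L^2(0,L))}\lesssim \|w\|_{\mathcal{Z}_T}^2$ together with Proposition~\ref{p3}. The only cosmetic difference is that the paper parametrizes the control operator on pairs $(v_0,v_T)$ and absorbs the nonlinearity by shifting the target, $h=\Psi\bigl(v_0,\,v_T+\int_0^T W_0(T-\tau)(uu_x)\,d\tau\bigr)$, whereas you fold the forcing $f=-ww_x$ directly into a three-argument operator $\Phi(v_0,v_T,f)$; the two are equivalent. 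One small slip: $T>0$ is \emph{given} in the statement, so you may not ``replace $T$ by a smaller time''; but this is harmless, since for fixed $T$ you can always choose $R$ (and then $\delta$) small enough that $CT^{1/2}R\le 1/2$.
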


 To prove the Theorem \ref{th7_int},  we first consider  the linear system associated to (\ref{a1.1-2})
 \begin{equation}
\left\{\begin{array}[c]{lll}%
v_t+(c+1)v_x +v_{xxx}=0 &  & \text{ in } (0,L)\times(0,T),\\
v_{xx}(0,t)=0,\text{ }v_x(L,t)=h(t),\text{ }v_{xx}(L,t)=0 &  & \text{ in }(0,T),\\
v(x,0)=v_0(x) & & \text{ in }(0,L)
\end{array}
\right.  \label{EBC_1}
\end{equation}
 and its adjoint system
 \begin{equation}
\left\{
\begin{array}
[c]{lll}%
\psi_t+(c+1)\psi_x+\psi_{xxx}=0&  &\text{ in }(0,L)\times(0,T),\\
(c+1)\psi(0,t)+\psi_{xx}(0,t)=0&  &\text{ in }(0,T),\\
(c+1)\psi(L,t)+\psi_{xx}(L,t)=0&  &\text{ in }(0,T),\\
\psi_x(0,t)=0&  &\text{ in }(0,T),\\
\psi(x,T)=\psi_T(x) & &\text{ in }(0,L).
\end{array}
\right.  \label{a1}
\end{equation}
Note that by transformation $x'=L-x$ and $t'=T-t$,  the system (\ref{a1}) is equivalent of the following forward system
\begin{equation}
\left\{
\begin{array}
[c]{lll}%
\varphi_t+(c+1)\varphi_x+\varphi_{xxx}=0 &  &\text{ in }(0,L)\times(0,T),\\
(c+1)\varphi(0,t)+\varphi_{xx}(0,t)=0&  &\text{ in }(0,T),\\
(c+1)\varphi(L,t)+\varphi_{xx}(L,t)=0&  &\text{ in }(0,T),\\
\varphi_x(L,t)=0&  &\text{ in }(0,T),\\
\varphi(x,0)=\varphi_0(x) & &\text{ in }(0,L).
\end{array}
\right.  \label{a2}
\end{equation}

\begin{proposition}\label{sharp}
For any $\varphi_0\in L^2(0,L)$,  the system \eqref{a2} admits a unique solution $\varphi\in\mathcal{Z}_T$ which, moreover,   possesses the following hidden  regularities
\begin{equation}
\sup_{x\in(0,L)}||\partial^r_x\varphi(x,\cdot)||_{H^{\frac{1-r}{3}}(0,T)}\leq C_r||\varphi_0||_{L^2(0,L)},
\label{trace}
\end{equation}
for $r=0,1,2$.
\end{proposition}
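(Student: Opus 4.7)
The plan is to combine semigroup well-posedness with an extension-and-correction argument, transferring the sharp Kato smoothing of the free KdV group on $\mathbb{R}$ (Proposition \ref{l2}) to the finite interval $(0,L)$, in the same spirit as Proposition \ref{p3}.

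First I would verify that the spatial operator
$$A\varphi=-(c+1)\varphi'-\varphi''',\qquad \mathcal{D}(A)=\{\varphi\in H^3(0,L):\,(c+1)\varphi(0)+\varphi''(0)=(c+1)\varphi(L)+\varphi''(L)=\varphi'(L)=0\},$$
generates a $C_0$-semigroup on $L^2(0,L)$. Integration by parts shows that $A$ and $A^{*}$ are dissipative up to a bounded perturbation, so Lumer--Phillips applies and yields $\varphi\in C([0,T];L^2(0,L))$. Multiplying the equation by $x\varphi$ and integrating then gives the Kato smoothing $\varphi\in L^2(0,T;H^1(0,L))$, whence $\varphi\in\mathcal{Z}_T$; uniqueness follows from the energy identity.

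To obtain the sharp trace bound \eqref{trace}, I would split $\varphi=\Phi|_{(0,L)}+\Psi$, where $\Phi(\cdot,t)=W_{\mathbb{R}}(t)\tilde\varphi_0$ with $\tilde\varphi_0\in L^2(\mathbb{R})$ the zero extension of $\varphi_0$ and $W_{\mathbb{R}}$ the unitary group for $u_t+(c+1)u_x+u_{xxx}=0$ (unitarily equivalent to the driftless KdV group via a Galilean shift). Proposition \ref{l2} then yields $\|\partial_x^r\Phi\|_{L^{\infty}_x(\mathbb{R};H^{(1-r)/3}(\mathbb{R}))}\leq C\|\varphi_0\|_{L^2(0,L)}$ for $r=0,1,2$. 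The correction $\Psi=\varphi-\Phi|_{(0,L)}$ solves the same PDE on $(0,L)\times(0,T)$ with zero initial datum and nonhomogeneous boundary data $f_1=-[(c+1)\Phi(0,\cdot)+\Phi_{xx}(0,\cdot)]$, $f_2=-\Phi_x(L,\cdot)$, $f_3=-[(c+1)\Phi(L,\cdot)+\Phi_{xx}(L,\cdot)]$, which by the preceding bounds lie in $H^{-1/3}(0,T)\times L^2(0,T)\times H^{-1/3}(0,T)$ with norms controlled by $\|\varphi_0\|_{L^2(0,L)}$. To estimate $\Psi$ I would repeat the Laplace-transform construction of Section \ref{Sec2}: writing $\hat\Psi(x,s)=\sum_{j=1}^3 c_j(s)e^{\lambda_j(s)x}$ with $\lambda_j$ the roots of $s+(c+1)\lambda+\lambda^3=0$, invert the $3\times 3$ matrix coming from the Robin/Neumann boundary conditions above, and perform the asymptotic analysis of the ratios $\Delta^+_{j,m}(\rho)/\Delta^+(\rho)$ combined with Plancherel in $t$, exactly as in Proposition \ref{l1}, to recover the sharp trace bounds on $\Psi$. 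Combining the estimates on $\Phi$ and $\Psi$ then gives \eqref{trace}.

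The principal obstacle is this last step. The perturbed characteristic equation $s+(c+1)\lambda+\lambda^3=0$ produces roots whose sub-leading behaviour differs from the driftless roots in \eqref{w4}, so one must recompute the determinant $\Delta^+(\rho)$ for the new coefficient matrix, verify that it does not vanish on $\rho\geq 0$, and check that each minor ratio $\Delta^+_{j,m}/\Delta^+$ still decays at the correct exponential rate $e^{-\sqrt{3}\rho L/2}$ at infinity. Since the leading-order asymptotics of the $\lambda_j(s)$ as $|s|\to\infty$ coincide with the driftless case, the same decay pattern as tabulated in the proof of Proposition \ref{l1} is expected, but the detailed verification for the specific mixed boundary conditions of \eqref{a2} requires a careful case-by-case analysis.
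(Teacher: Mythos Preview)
Your extension-and-correction scheme is viable, but the paper takes a shorter route that avoids the recomputation you flag as the ``principal obstacle.'' Instead of keeping the drift $(c+1)\varphi_x$ in the principal part and redoing the Laplace analysis for the perturbed characteristic equation $s+(c+1)\lambda+\lambda^3=0$ and the Robin conditions, the paper treats both as perturbations of the driftless Neumann problem already handled in Proposition~\ref{p3}. Concretely, for $v\in\mathcal{X}_\beta$ one solves
\[
w_t+w_{xxx}=-(c+1)v_x,\quad w_{xx}(0,t)=-(c+1)v(0,t),\quad w_x(L,t)=0,\quad w_{xx}(L,t)=-(c+1)v(L,t),
\]
with $w(x,0)=\varphi_0(x)$; this is exactly the system \eqref{w6}, so Proposition~\ref{p3} applies verbatim and gives $\|w\|_{\mathcal{X}_\beta}\le C\|\varphi_0\|_{L^2}+C\beta^{1/2}\|v\|_{\mathcal{X}_\beta}$ (the powers of $\beta$ coming from $\|v_x\|_{L^1_tL^2_x}$ and from $\|v(0,\cdot)\|_{H^{-1/3}}\le C\beta^{2/3}\|v(0,\cdot)\|_{H^{1/3}}$). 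For $\beta$ small the map $v\mapsto w$ contracts on a ball of $\mathcal{X}_\beta$, and since $\beta$ depends only on $T$ and not on $\varphi_0$, one iterates to reach time $T$. The fixed point is the solution of \eqref{a2} and the $\mathcal{X}_T$-bound is precisely \eqref{trace}.

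The payoff of the paper's argument is that no new boundary integral operator is built: the Robin conditions are absorbed as \emph{data} for the already-analysed Neumann operator, and the drift becomes a forcing term. Your approach would also work, but you would have to carry out in full the determinant and minor asymptotics you mention, for a new $3\times3$ system. One further caution on your first paragraph: the generator $A$ you write down is not dissipative up to a \emph{bounded} perturbation in $L^2(0,L)$, since the boundary contribution $\tfrac{c+1}{2}\varphi(L)^2$ appearing in $(A\varphi,\varphi)$ is not controlled by $\|\varphi\|_{L^2}^2$; Lumer--Phillips does not apply directly. The paper sidesteps this entirely by constructing the solution through the contraction argument, which delivers existence, uniqueness and the hidden regularity simultaneously.
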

\begin{remark}
Equivalently, for any $\psi _T\in L^2 (0,L)$,   the system \eqref{a1} admits a unique solution $\psi \in Z_T$ which, moreover, possesses  the hidden regularities
\begin{equation}
\sup_{x\in(0,L)}||\partial^r_x\psi(x,\cdot)||_{H^{\frac{1-r}{3}}(0,T)}\leq C_r||\psi_T||_{L^2(0,L)},
\label{trace_a}
\end{equation}
for $r=0,1,2$.
\end{remark}
\begin{proof}[\bf{Proof of Proposition \ref{sharp}}]
Let us consider the set $$\mathcal{X}_T:=\{u\in\mathcal{Z}_T:\partial^r_xu\in L_x^{\infty}(0,L;H^{\frac{1-r}{3}}(0,T)), r=0,1,2\}$$ which is a Banach space equipped with the norm
$$||u||_{\mathcal{X}_T}:=||u||_{\mathcal{Z}_T}+\sum_{r=0}^2||\partial^r_xu||_{L_x^{\infty}(0,L;H^{\frac{1-r}{3}}(0,T))}.$$
According to Proposition \ref{p3}, for any $v\in\mathcal{X}_{\beta}$ where $0<\beta\leq T$ and any $\varphi_0\in L^2 (0,L)$, the system
\begin{equation}
\left\{
\begin{array}
[c]{lll}%
w_t+w_{xxx}=-(c+1)v_x  &  &\text{ in }(0,L)\times(0,T),\\
w_{xx}(0,t)=-(c+1)v(0,t)&  &\text{ in }(0,T),\\
w_{xx}(L,t)=-(c+1)v(L,t)&  &\text{ in }(0,T),\\
w_x(L,t)=0 &  &\text{ in }(0,T),\\
w(x,0)=\varphi_0(x) &  &\text{ in }(0,L),
\end{array}
\right.  \label{a4}
\end{equation}
admits a unique solution $w\in\mathcal{X}_{\beta}$ and, moreover,
$$||w||_{\mathcal{X}_{\beta}}\leq C\left(||\psi_0||_{L^2(0,L)}+||v(0,\cdot)||_{H^{-\frac{1}{3}}(0,T)}+||v(L,\cdot)||_{H^{-\frac{1}{3}}(0,T)}+||v_x||_{L^1(0,\beta;L^2(0,L))}\right),$$
where the constant $C>0$ depends only on $T$. As we have,
$$||v_x||_{L^1(0,\beta;L^2(0,L))}\leq C\beta^{1/2}||v||_{\mathcal{X}_{\beta}},$$
$$||v(0,\cdot)||_{H^{-\frac{1}{3}}(0,\beta)}\leq ||v(0,\cdot)||_{L^2(0,\beta)}\leq \beta^{2/3}||v(0,\cdot)||_{L^6(0,\beta)}\leq C\beta^{2/3}||v(0,\cdot)||_{H^{\frac{1}{3}}(0,\beta)}\leq C\beta^{2/3}||v||_{\mathcal{X}_{\beta}}$$
and
$$||v(L,\cdot)||_{H^{-\frac{1}{3}}(0,\beta)}\leq ||v(L,\cdot)||_{L^2(0,\beta)}\leq \beta^{2/3}||v(L,\cdot)||_{L^6(0,\beta)}\leq C\beta^{2/3}||v(L,\cdot)||_{H^{\frac{1}{3}}(0,\beta)}\leq C\beta^{2/3}||v||_{\mathcal{X}_{\beta}},$$
the system \eqref{a4} defines a map as follows
$$
\begin{array}{lcl} \Gamma: &\mathcal{X}_{\beta} \longrightarrow & \mathcal{X}_{\beta}\\
&v\mapsto&\Gamma(v)=w,\\
\end{array}
$$
for any $v\in\mathcal{X}_T$ and $\beta\in(0,\max\{1,T\}]$. Here $w\in\mathcal{X}_{\beta}$ is the corresponding solution of \eqref{a4} and
$$||\Gamma(v)||_{\mathcal{X}_{\beta}}\leq C_1||\psi_0||_{L^2(0,L)}+C_2\beta^{1/2}||v||_{\mathcal{X}_{\beta}},$$
where $C_1$ and $C_2$ are two positive constants depending only on $T$. Choosing $r>0$ and $\beta\in\left(0,\max\{1,T\}\right]$ such that
$$r=2C_1||\psi_0||_{L^2(0,L)}$$
and
$$2C_2\beta^{1/2}\leq\frac{1}{2},$$
then, for any
$$v\in \mathcal{B}_{\beta,r}=\{v\in\mathcal{X}_{\beta}: ||v||_{\mathcal{X}_{\beta}}\leq r\},$$
we have
$$||\Gamma(v)||_{\mathcal{X}_{\beta}}\leq r.$$
Moreover, for any $v_1,v_2\in\mathcal{B}_{\beta,r}$, we get
$$||\Gamma(v_1)-\Gamma(v_2)||_{\mathcal{X}_{\beta}}\leq 2C_2\beta^{1/2}||v_1-v_2||_{\mathcal{X}_{\beta}}\leq\frac{1}{2}||v_1-v_2||_{\mathcal{X}_{\beta}}.$$
Therefore, the map $\Gamma$ is a contraction mapping on $\mathcal{B}_{\beta,r}$. Its fixed point $w=\Gamma(v)\in\mathcal{X}_{\beta}$ is the desired solution for $t\in(0,\beta)$. As the chosen $\beta$ is independent of $\varphi_0$, the standard continuation extension argument yields that the solution $w$ belongs to $\mathcal{X}_{\beta}$. The proof is complete.
\end{proof}
The system  \eqref{a2} possesses  an elementary estimate as described below.
\begin{proposition}\label{p_adj_1}
Any solution $\varphi$ of the adjoint system \eqref{a2} with initial data $\varphi_0\in L^2(0,L)$ satisfies
\begin{equation}
||\varphi_0||^2_{L^2(0,L)}\leq \frac{1}{T}||\varphi||^2_{L^2((0,L)\times(0,T))}+||\varphi_x(0,\cdot)||^2_{L^2(0,T)}+(c+1)||\varphi(0,\cdot)||^2_{L^2(0,T)}.
\label{a5}
\end{equation}
\end{proposition}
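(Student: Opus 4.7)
The plan is to derive the estimate by the multiplier method, using the weight $(T-t)$, which is specifically engineered so that integration by parts in time reproduces exactly the quantity $\tfrac{T}{2}\|\varphi_0\|_{L^2(0,L)}^2$ on the left-hand side. Concretely, I multiply the equation $\varphi_t+(c+1)\varphi_x+\varphi_{xxx}=0$ by $(T-t)\varphi$ and integrate over $(0,L)\times(0,T)$, then integrate by parts in each derivative in turn.

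For the time term, integration by parts in $t$ gives
\[
\int_0^T\!\!\int_0^L (T-t)\varphi\varphi_t\,dx\,dt = -\tfrac{T}{2}\|\varphi_0\|_{L^2(0,L)}^2 + \tfrac{1}{2}\int_0^T\!\!\int_0^L \varphi^2\,dx\,dt,
\]
where the terminal boundary term vanishes because $(T-t)|_{t=T}=0$. For the spatial terms, I would integrate $(c+1)\varphi\varphi_x=\tfrac{c+1}{2}(\varphi^2)_x$ directly to get $\tfrac{c+1}{2}[\varphi^2]_0^L$, and integrate $\varphi\varphi_{xxx}$ by parts twice to produce $[\varphi\varphi_{xx}]_0^L-\tfrac{1}{2}[\varphi_x^2]_0^L$. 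Plugging in the adjoint boundary conditions $\varphi_{xx}(0,t)=-(c+1)\varphi(0,t)$, $\varphi_{xx}(L,t)=-(c+1)\varphi(L,t)$, and $\varphi_x(L,t)=0$ collapses all six spatial boundary contributions to
\[
-\tfrac{c+1}{2}\varphi(L,t)^2 + \tfrac{c+1}{2}\varphi(0,t)^2 + \tfrac{1}{2}\varphi_x(0,t)^2.
\]

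Combining these computations yields the key identity
\[
\tfrac{T}{2}\|\varphi_0\|_{L^2(0,L)}^2 = \tfrac{1}{2}\|\varphi\|_{L^2((0,L)\times(0,T))}^2 + \int_0^T (T-t)\Big[-\tfrac{c+1}{2}\varphi(L,t)^2 + \tfrac{c+1}{2}\varphi(0,t)^2 + \tfrac{1}{2}\varphi_x(0,t)^2\Big]\,dt.
\]
Since $c+1>0$ (which is implicit in the very definition of the critical set $\mathcal{R}_c$, where square roots of $c+1$ appear), the contribution $-\tfrac{c+1}{2}\varphi(L,t)^2$ is nonpositive and may be discarded; bounding the remaining weight by $(T-t)\leq T$ and dividing by $T/2$ produces exactly the claimed inequality \eqref{a5}. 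The main obstacle is not analytical but bookkeeping: one must track the boundary values arising from three successive integrations by parts in $x$ and verify that the adjoint boundary conditions are precisely the ones that cause the $\varphi(L,t)^2$ contributions to combine with the favorable sign so that no uncontrolled right-end trace survives. The trace regularities needed to justify the manipulations rigorously are provided by Proposition \ref{sharp}, after which a density argument in $\varphi_0\in L^2(0,L)$ extends the identity from smooth data to the general case.
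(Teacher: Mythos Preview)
Your argument is correct and follows essentially the same route as the paper: multiply the equation by the weight $(T-t)\varphi$, integrate by parts over $(0,L)\times(0,T)$, use the adjoint boundary conditions to reduce the spatial boundary terms to $-\tfrac{c+1}{2}\varphi(L,t)^2+\tfrac{c+1}{2}\varphi(0,t)^2+\tfrac12\varphi_x(0,t)^2$, and then drop the sign-favorable term and bound $(T-t)\le T$. Your write-up merely spells out in more detail the integrations by parts and the justification via trace regularity and density that the paper's one-line proof leaves implicit.
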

\begin{proof}
Multiplying the equation \eqref{a2} by $(T-t)\varphi$ and integrating by parts over $(0,L)\times(0,T)$, we get
$$\frac{T}{2}\int_0^L\varphi^2_0dx=\frac{1}{2}\int_0^L\int_0^T\varphi^2dxdt+\int_0^T\left(\frac{T-t}{2}\right)\left(-(c+1)\varphi^2(L)+(c+1)\varphi^2(0)+\varphi_x^2(0)\right)dt,$$
which yields \eqref{a5} since $c+1>0$.
\end{proof}
Equivalently, the following estimate holds for solutions $\psi$ of the system \eqref{a1}
\begin{equation}
||\psi_T||^2_{L^2(0,L)}\leq \frac{1}{T}||\psi||^2_{L^2((0,L)\times(0,T))} +(c+1)||\psi(L,\cdot)||^2_{L^2(0,T)} +||\psi _x(L,\cdot)||^2_{L^2(0,T)}.
\label{a6}
\end{equation}
\begin{remark}\label{rosier}
As a comparison, it is worth pointing out that for the adjoint system of \eqref{w8}, which is given by
\begin{equation}
\left\{
\begin{array}
[c]{lll}%
\xi_t+\xi_x+\xi_{xxx}=0  &  &\text{ in }(0,L)\times(0,T),\\
\xi(0,t)=0,\text{ }\xi(L,t)=0,\text{ }\xi_x(0,t)=0  &  &\text{ in }(0,T),\\
\xi(x,T)=\xi_T(x) & &\text{ in }(0,L),
\end{array}
\right.  \label{a7}
\end{equation}
the following inequality holds
\begin{equation}
||\xi_T||_{L^2(0,L)}\leq \frac{1}{T}||\xi||_{L^2((0,L)\times(0,T))}+||\xi_x(L,\cdot)||^2_{L^2(0,T)}.
\label{a8}
\end{equation}
The extra term $||\psi(L,\cdot)||^2_{L^2(0,T)}$ in \eqref{a6} brings a  technique difficulty in establishing the observability inequality of the adjoint system \eqref{a1}, which calls the use of  the hidden regularities  established in Proposition \ref{sharp}.
\end{remark}

Now we turn to analyze the exact controllability of the linear system \eqref{EBC_1}.




\begin{proposition}\label{control}
Assume $c+1\neq0$. 
Let $T>0$ and $L\notin\mathcal{R}_c$ be given. There exists a bounded linear operator
 $$
\begin{array}{lcl} \Psi: &L^2(0,L)\times L^2(0,L) \longrightarrow & L^2(0,T)
\end{array}
$$
such that for any $v_0,v_T\in L^2(0,L)$, if one chooses $h_2=\Psi(v_0,v_T)$, then system \eqref{EBC_1} admits a solution $v\in\mathcal{Z}_T$ satisfying
\[ v|_{t=0} =v_0, \qquad v|_{t=T} = v_T.\]
\end{proposition}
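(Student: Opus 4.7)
\textbf{Proof proposal for Proposition \ref{control}.}

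My plan is to use the Hilbert Uniqueness Method (HUM). Multiplying the forward equation \eqref{EBC_1} by a solution $\psi$ of the adjoint system \eqref{a1} and integrating by parts using both sets of boundary conditions (all seven traces cancel in pairs, leaving only $v_x(L,t)\psi_x(L,t)$), I obtain the duality identity
\begin{equation*}
\int_0^L v(x,T)\psi_T(x)\,dx - \int_0^L v_0(x)\psi(x,0)\,dx = \int_0^T h(t)\,\psi_x(L,t)\,dt.
\end{equation*}
Hence the existence and boundedness of a linear map $(v_0,v_T)\mapsto h$ solving the control problem is equivalent to the observability inequality
\begin{equation*}
\|\psi_T\|_{L^2(0,L)}^2 \le C\,\|\psi_x(L,\cdot)\|_{L^2(0,T)}^2
\end{equation*}
for every solution of \eqref{a1}. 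Given this inequality, $\Psi$ is produced in the standard way by minimizing the strictly convex coercive functional $J(\psi_T)=\tfrac12\|\psi_x(L,\cdot)\|_{L^2(0,T)}^2+\int_0^L v_0\psi(\cdot,0)\,dx - \int_0^L v_T\psi_T\,dx$; the optimal $\psi_T^\ast$ gives $h=\psi_x^\ast(L,\cdot)\in L^2(0,T)$, with the estimate $\|h\|_{L^2(0,T)}\le C(\|v_0\|_{L^2}+\|v_T\|_{L^2})$.

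The first step toward observability is the elementary estimate \eqref{a6}, which only yields the weaker bound
\begin{equation*}
\|\psi_T\|_{L^2(0,L)}^2 \le C_1\|\psi\|_{L^2((0,L)\times(0,T))}^2 + C_2\|\psi(L,\cdot)\|_{L^2(0,T)}^2 + C_3\|\psi_x(L,\cdot)\|_{L^2(0,T)}^2.
\end{equation*}
Removing the first two extra terms will be carried out by a compactness–uniqueness argument. Assume by contradiction a sequence $\psi_T^{(n)}$ with $\|\psi_T^{(n)}\|_{L^2}=1$ and $\|\psi_x^{(n)}(L,\cdot)\|_{L^2(0,T)}\to 0$. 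By the well-posedness estimate in $\mathcal{Z}_T$, the sequence $\psi^{(n)}$ is bounded in $L^2(0,T;H^1(0,L))\cap C([0,T];L^2(0,L))$, so Aubin–Lions gives a subsequence strongly convergent in $L^2(0,T;L^2(0,L))$. The hidden regularities of Proposition~\ref{sharp} (equivalently \eqref{trace_a}) bound $\psi^{(n)}(L,\cdot)$ in $H^{1/3}(0,T)$, so after extraction $\psi^{(n)}(L,\cdot)\to\psi(L,\cdot)$ strongly in $L^2(0,T)$; this is precisely the point at which the sharp trace estimates are indispensable. The limit $\psi$ is then a nontrivial solution of \eqref{a1} that additionally satisfies $\psi_x(L,t)\equiv 0$.

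The heart of the argument is showing that this overdetermined system forces $\psi\equiv 0$ when $L\notin\mathcal{R}_c$. Following Rosier's strategy, the subspace $N_T\subset L^2(0,L)$ of terminal data producing $\psi_x(L,\cdot)\equiv 0$ is closed and invariant under the adjoint semigroup. By an argument of Rosier (using that it is finite dimensional via compactness, or applying a Paley–Wiener/Laplace transform in time), nontriviality of $N_T$ implies the existence of some $\lambda\in\mathbb{C}$ and a nonzero $\varphi\in H^3(0,L)$ with
\begin{equation*}
\lambda\varphi+(c+1)\varphi'+\varphi'''=0,\qquad (c+1)\varphi(0)+\varphi''(0)=0,\ \ (c+1)\varphi(L)+\varphi''(L)=0,
\end{equation*}
together with the two endpoint conditions $\varphi'(0)=0$ and $\varphi'(L)=0$. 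Writing $\varphi=\sum_{j=1}^3 a_j e^{\mu_j x}$ where $\mu_j$ are the roots of $\mu^3+(c+1)\mu+\lambda=0$, the five boundary conditions produce a homogeneous $5\times 3$ linear system (with two compatibility identities coming from the ODE), whose solvability amounts to the vanishing of an explicit determinant in $\mu_1,\mu_2,\mu_3,L$. Using the Cardano relations $\mu_1+\mu_2+\mu_3=0$, the condition forces either $\mu_j L=ik\pi$ (giving the second piece of $\mathcal{R}_c$) or, after the change of variable $\mu_j=i(c+1)^{1/2}\nu_j/\sqrt{3}$, exactly the $k^2+kl+l^2$ condition of Rosier (giving the first piece). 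I expect this spectral computation to be the main obstacle: it is essentially a Rosier-type analysis but with mixed Neumann/Robin boundary data, producing the enlarged critical set $\mathcal{R}_c$ rather than $\mathcal{N}$. Once this is established, the assumption $L\notin\mathcal{R}_c$ forces $\varphi\equiv 0$, hence $N_T=\{0\}$; a further routine contradiction (using the strong convergence above and the weaker inequality \eqref{a6}) closes the compactness–uniqueness loop and yields the observability inequality, completing the proof.
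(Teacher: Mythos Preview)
Your proposal is correct and follows essentially the same architecture as the paper's proof: reduction to the observability inequality \eqref{obs1}, a compactness--uniqueness argument via Aubin--Lions together with the hidden trace regularity \eqref{trace_a} to handle the extra boundary term in \eqref{a6}, reduction of the overdetermined limit to the spectral problem \eqref{c10}, and finally showing this problem has no nontrivial solution when $L\notin\mathcal{R}_c$.

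The only substantive difference is in the spectral step, which you correctly flag as the main obstacle. Rather than expanding $\varphi=\sum a_j e^{\mu_j x}$ and chasing a determinant, the paper takes the Fourier transform of the eigenvalue ODE against $e^{-ix\xi}$ on $(0,L)$; after integrating by parts and using all four boundary conditions one obtains $\hat\varphi(\xi)=-i\xi^2(\alpha-\beta e^{-iL\xi})/(\xi^3-(c+1)\xi+p)$ with $\alpha=\varphi(0)$, $\beta=\varphi(L)$. By Paley--Wiener, $\varphi\in H^2$ supported in $[0,L]$ forces this rational expression to be entire, so the three (necessarily simple, since $c+1\neq 0$) roots of $Q(\xi)=\xi^3-(c+1)\xi+p$ must be roots of the numerator. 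The two pieces of $\mathcal{R}_c$ then fall out of a clean case split: either all three roots of $Q$ are roots of $\alpha-\beta e^{-iL\xi}$ (hence differ pairwise by multiples of $2\pi/L$, yielding the $k^2+kl+l^2$ family), or $\xi=0$ is a root of $Q$ absorbed by the $\xi^2$ factor and the remaining two roots $\pm\sqrt{c+1}$ differ by $2k\pi/L$ (yielding $L=k\pi/\sqrt{c+1}$). This Paley--Wiener route is somewhat cleaner than the $5\times 3$ determinant computation you outline, and in particular makes transparent why the extra family $\{k\pi/\sqrt{c+1}\}$ appears here but not in Rosier's Dirichlet case.
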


\begin{proof} It suffices to prove that:

\smallskip
{\em For any given  $L\in(0,+\infty)\backslash\mathcal{R}_c$ and $T>0$ , there exists  a positive constant $C$ depending only on $T$ and $L$ such  that
\begin{equation}
||\psi_T||_{L^2(0,L)}\leq C||\psi_x(L,t)||_{L^2(0,T)}
\label{obs1}
\end{equation}
holds for any $\psi_T\in L^2(0,L)$, where $\psi$ is the solution of \eqref{a1} with the terminal  data $\psi_T$}.

\medskip
We proceed by contradiction as in \cite[Proposition 3.3]{Rosier}. If \eqref{obs1} does not holds, then there exists a sequence $\{\psi^n_T\}_{n\in\mathbb{N}}\in L^2(0,L)$ with
\begin{equation}
||\psi^n_T||_{L^2(0,L)}=1, \forall n\in\mathbb{N}
\label{c2}
\end{equation}
such that the corresponding solutions of \eqref{a1} satisfy
\begin{equation}
1=||\psi^n_T||_{L^2(0,L)}>n||\psi^n_x(L,t)||_{L^2(0,T)},
\label{c3}
\end{equation}
thus $||\psi^n_x(L,t)||_{L^2(0,T)}\to0$, as $n\to\infty$. Thanks to Proposition \ref{sharp} we have $\{\psi^n\}_{n\in\mathbb{N}}$ is bounded in $L^2(0,T;H^1(0,L))$ and $\{\psi^n(0,t)\}_{n\in\mathbb{N}}$
is bounded in $H^{\frac{1}{3}}(0,T)$. In addition, according to Proposition \ref{p_adj_1}, we have
\begin{equation}
||\psi^n_T||_{L^2(0,L)}\leq \frac{1}{T}||\psi^n||^2_{L^2((0,L)\times(0,T))}+||\psi^n_x(L,\cdot)||^2_{L^2(0,T)}+(c+1)||\psi^n(0,\cdot)||^2_{L^2(0,T)}.
\label{c4}
\end{equation}
Since $\psi^n_t=-(c+1)\psi^n_x-\psi^n_{xxx}$ is bounded in $L^2(0,T;H^{-2}(0,L))$ and the embedding
$$H^1(0,L)\hookrightarrow L^2(0,L)\hookrightarrow H^{-2}(0,L),$$
the sequence $\{\psi^n\}_{n\in\mathbb{N}}$ is relatively compact in $L^2(0,T;L^2(0,L))$ (see \cite{Simon}). Furthermore,
the second term on the right in \eqref{c4} converges to zero in $L^2(0,T)$, and by the compact embedding
$$H^{\frac{1}{3}}(0,T)\hookrightarrow L^2(0,T),$$
the sequence $\{\psi^n(0,t)\}_{n\in\mathbb{N}}$ has a convergent subsequence in $L^2(0,T)$. Therefore by \eqref{c4}, $\{\psi^n_T\}_{n\in\mathbb{N}}$ is an $L^2(0,L)$--Cauchy sequence, thus, at least for a subsequence, we have
\begin{equation}
\psi^n_T\longrightarrow\psi_T  \text{ in } L^2(0,L),
\label{c5}
\end{equation}
by Theorem \ref{sharp} holds that,
\begin{equation}
\psi^n_x(L,t)\longrightarrow\psi_x(L,t)  \text{ in } L^2(0,T).
\label{c6}
\end{equation}
From \eqref{c2}, \eqref{c5} and \eqref{c6}, we have $\psi$ is a solution of
\begin{equation}
\left\{
\begin{array}
[c]{lll}
\psi_t+(c+1)\psi_x+\psi_{xxx}=0 &  & \text{in }(0,L)\times(0,T),\\
(c+1)\psi(0,t)+\psi_{xx}(0,t)=0&  & \text{in }(0,T),\\
(c+1) \psi(L,t)+\psi_{xx}(L,t)=0&  & \text{in }(0,T),\\
\psi_x(0,t)=0&  & \text{in }(0,T),
\end{array}
\right.  \label{c7}
\end{equation}
satisfying the additional boundary condition
\begin{equation}
\psi_x(L,t)=0
\label{c8}
\end{equation}
and
\begin{equation}
||\psi_T||_{L^2(0,L)}=1.
\label{c9}
\end{equation}
Notice that \eqref{c9} implies that the solutions of \eqref{c7}-\eqref{c8} can not be identically zero. Therefore, by the following Lemma \ref{ucp1}, one can conclude that $\psi\equiv0$, therefore, $\psi_T(x)\equiv0$, which contradicts \eqref{c9}.
\end{proof}

\begin{lemma}
\label{ucp1}For any $T>0$, let $N_T$ denote the space of the initial states $\psi_T\in L^2(0,L)$ such that the mild solution $\psi$ of \eqref{c7} satisfies \eqref{c8}. Then, for $L\in(0,+\infty)\backslash\mathcal{R}_c$, $N_T=\{0\}, \forall
T>0$.
\end{lemma}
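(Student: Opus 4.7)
The plan is to follow the strategy used by Rosier \cite{Rosier} in the proof of his Lemma 3.5, adapted to the Neumann boundary conditions. I would proceed in four steps.

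First, I would show that $N_T$ is a finite-dimensional subspace of $L^2(0,L)$. It is closed (by continuous dependence of the solution on the initial data together with Proposition \ref{sharp}). For finite-dimensionality, I would show that the unit ball of $N_T$ is relatively compact in $L^2(0,L)$: if $\{\psi^n_T\}$ is a bounded sequence in $N_T$, the Kato smoothing of Proposition \ref{sharp} makes the corresponding solutions $\{\psi^n\}$ bounded in $L^2(0,T;H^1(0,L))$, the equation gives boundedness of $\{\psi^n_t\}$ in $L^2(0,T;H^{-2}(0,L))$, and Aubin--Lions plus the estimate \eqref{a6} (in which the boundary term $\psi_x(L,\cdot)$ vanishes by definition of $N_T$, and the boundary term $\psi(L,\cdot)$ is compact via the sharp trace in $H^{1/3}(0,T)$) yield convergence in $L^2(0,L)$ for a subsequence of $\{\psi^n_T\}$.

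Second, I would observe that $N_T$ is invariant under the time shift: if $\psi_T \in N_T$ and $\tau \in [0,T)$, then $\psi(\cdot,\tau) \in N_T$ as well, since the translated solution still satisfies \eqref{c7} and \eqref{c8}. Thus the adjoint semigroup restricts to a $C_0$-semigroup on the finite-dimensional space $N_T$, which induces a bounded linear generator $A$ on its complexification $N_T^{\mathbb{C}}$. Any linear operator on a nonzero finite-dimensional complex space has at least one eigenvalue, so there exist $\lambda\in\mathbb{C}$ and a nonzero $\varphi\in L^2(0,L;\mathbb{C})$ with $\psi(x,t)=e^{\lambda(T-t)}\varphi(x)$ solving \eqref{c7}--\eqref{c8}.

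Third, substituting this ansatz reduces the problem to the overdetermined ODE boundary-value problem
\begin{equation*}
\varphi''' + (c+1)\varphi' + \lambda\varphi = 0 \quad \text{on } (0,L),
\end{equation*}
with the four conditions $\varphi'(0)=0$, $\varphi'(L)=0$, $(c+1)\varphi(0)+\varphi''(0)=0$, and $(c+1)\varphi(L)+\varphi''(L)=0$. Writing $\varphi(x)=\sum_{j=1}^{3} a_j e^{p_j x}$ with $p_j$ the roots of $p^3+(c+1)p+\lambda=0$, the four linear conditions on $(a_1,a_2,a_3)$ give a homogeneous $4\times 3$ system; nontrivial solvability is equivalent to the simultaneous vanishing of all four $3\times 3$ minors.

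Fourth — and this is the main obstacle — I must show that this algebraic compatibility forces $L \in \mathcal{R}_c$. Proceeding as in Rosier, one uses Vieta's relations $p_1+p_2+p_3=0$, $p_1 p_2 + p_1 p_3 + p_2 p_3 = c+1$, $p_1 p_2 p_3 = -\lambda$ to reduce the vanishing of the minors to a transcendental equation in $(p_1,p_2,p_3,L)$. A careful case analysis (after setting $p_j = i\xi_j$) should split into two regimes: a generic one where all three $\xi_j$ are distinct real numbers, leading after trigonometric manipulation to $\sqrt{3(c+1)}\,L = 2\pi\sqrt{k^2+kl+l^2}$; and a degenerate regime where two of the roots coincide as a complex-conjugate pair, producing the new condition $\sqrt{c+1}\,L = k\pi$ that is absent from Rosier's Dirichlet setting. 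This second branch is what makes $\mathcal{R}_c$ strictly larger than $\mathcal{N}$ (cf.\ the remark following Theorem \ref{th1.1}), and keeping track of it correctly is the principal technical difficulty. Contrapositively, if $L \notin \mathcal{R}_c$ then the only solution is $\varphi \equiv 0$, contradicting the existence of an eigenvalue unless $N_T = \{0\}$.
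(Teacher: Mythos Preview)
Your steps 1--3 are correct and match the standard Rosier reduction that the paper also invokes (the paper simply cites \cite{Rosier} for this part and does not spell out the finite-dimensionality and invariance arguments you give). The substantive work is step 4, and here your approach diverges from the paper's and contains a genuine misidentification of the mechanism producing the second family in $\mathcal{R}_c$.

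The paper does \emph{not} analyze minors of the $4\times 3$ boundary-condition matrix. Instead it takes the Fourier transform of the eigenfunction on $(0,L)$, obtaining after integration by parts
\[
\hat\psi_0(\xi) \;=\; -\,i\,\xi^{2}\,\frac{\alpha - \beta\, e^{-iL\xi}}{\xi^{3} - (c+1)\xi + p}\,,
\qquad \alpha=\psi_0(0),\ \beta=\psi_0(L),\ \lambda=-ip,
\]
and invokes Paley--Wiener: $\hat\psi_0$ must be entire of exponential type $L$, so every root of $Q(\xi)=\xi^{3}-(c+1)\xi+p$ must be cancelled by the numerator. The factor $\xi^{2}$---which comes precisely from the Neumann boundary conditions in the integration by parts---is the source of the second family. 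In the generic case all three (simple) roots of $Q$ are zeros of $\alpha-\beta e^{-iL\xi}$, hence real and in arithmetic progression modulo $2\pi/L$; Vieta then yields $L=2\pi\sqrt{(k^{2}+kl+l^{2})/3(c+1)}$. In the exceptional case $\xi=0$ is a root of $Q$ (equivalently $p=0$, i.e.\ $\lambda=0$), and the $\xi^{2}$ factor absorbs it; only the remaining two roots $\pm\sqrt{c+1}$ must lie on the zero set of $\alpha-\beta e^{-iL\xi}$, giving $L=k\pi/\sqrt{c+1}$.

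Your description of the exceptional branch---``two of the roots coincide as a complex-conjugate pair''---is wrong on both counts. No roots coincide: in \emph{both} families the three $\xi_j$ are real and pairwise distinct (since $c+1\neq 0$ forces $Q$ to have simple roots). What distinguishes the second family is that one root of $Q$ is \emph{zero}. In your direct approach this is the case $\lambda=0$: then $p_1=0$, $p_{2,3}=\pm i\sqrt{c+1}$, and the two ``second-order'' boundary rows $(c+1)\varphi+\varphi''$ at $x=0$ and $x=L$ both collapse to the single condition $a_1=0$ (because $(c+1)+p_{2,3}^{2}=0$), dropping the rank and producing the new branch $\sin(\sqrt{c+1}\,L)=0$. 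Your generic minor computation with three nonzero $p_j$ will only recover the first family; unless you separate out $\lambda=0$ explicitly you will miss the second one. So the approach is salvageable, but your case split is misdrawn, and the Paley--Wiener route in the paper makes the role of the $\xi^{2}$ prefactor---hence the reason $\mathcal{R}_c\supsetneq\mathcal{N}$---far more transparent than a minor-by-minor analysis would.
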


\begin{proof}
The proof uses the same arguments as those given in \cite{Rosier}. Therefore, If $N_T\neq\left\{  0\right\}  $, the map $\psi_T\in\mathbb{C}N_T\longrightarrow A(\psi_T)\in\mathbb{C}N_T$ (where $\mathbb{C}N_T$ denote the complexification of $N_T$) has (at least) one eigenvalue, hence, there exists $\lambda\in\mathbb{C}$ and $\psi_0\in H^3(0,L)\backslash\{0\} $ such that
\begin{equation}
\left\{
\begin{array}
[c]{l}
\lambda\psi_0= -(c+1)\psi_0^{\prime}-\psi_0^{\prime\prime\prime},\\
(c+1)\psi_0(0)+\psi_0^{\prime\prime}(0)=0, \text{ \ }(c+1)\psi_0(L)+\psi_0^{\prime\prime}(L)= 0, \text{ \ }\psi_0^{\prime}(0)=0, \text{ \ }\psi_0^{\prime}(L)=0.
\end{array}
\right.  \label{c10}
\end{equation}
To conclude the proof of the Lemma \ref{ucp1}, we prove that this does not hold if $L\notin\mathcal{R}_c$. To simplify the notation, henceforth we denote $\psi_0:=\psi$.

\begin{lemma}
\label{cont1}Let $L>0$. Consider the assertion
$$(\mathcal{F})\text{ \ \  \ \  \ \  \ \  \ \  \ \ }\exists\lambda\in\mathbb{C}\text{, }\exists\psi\in H^3(0,L)\backslash\{0\}\text{ such that }
\left\{
\begin{array}
[c]{l}
\lambda\psi=-(c+1)\psi^{\prime}-\psi^{\prime\prime\prime},\\
(c+1)\psi(0)+\psi^{\prime\prime}(0)=0,\\
(c+1)\psi(L)+\psi^{\prime\prime}(L)= 0,\\
\psi^{\prime}(0)=0, \text{ \ }\psi^{\prime}(L)=0.
\end{array}
\right.
$$
Then, $(   \mathcal{F})  $ holds if and only if $L\in\mathcal{R}_c$.
\end{lemma}
\noindent\textit{Proof. }
We will use the argument developed in \cite[Lemma 3.5]{Rosier}. Assume that $\psi$ satisfies $\mathcal{F}$. Let us introduce the notation $\hat{\psi}(\xi)=\int_0^L\psi(\xi)e^{-ix\xi}dx$. Then, multiplying the equation \eqref{c10} by $e^{-ix\xi}$, integrating by parts in $(0,L)$ and using the boundary condition we obtain
\begin{equation}
(\lambda+(c+1)(i\xi)+(i\xi)^{3})\hat{\psi}(\xi)= (i\xi)^2\psi(0)-(i\xi)^2\psi(L)e^{-iL\xi} .
\label{c11}
\end{equation}
Setting $\lambda=-ip$, we have
\begin{equation}
\hat{\psi}(\xi)=-i\xi^2\frac{\alpha-\beta e^{-iL\xi}}{\xi^{3}-(c+1)\xi+p}
\label{c12}
\end{equation}
with
\[ \alpha=\psi (0), \qquad \beta =\psi (L).\]

Using Paley-Wiener theorem (see \cite[Section 4, page 161]{yosida}) and the usual characterization of $H^2(\mathbb{R})$ by means of the Fourier transform we see that $\mathcal{F}$ is equivalent to the existence of $p\in\mathbb{C}$ and
$$(\alpha,\beta)\in\mathbb{C}^2\backslash\{(0,0)\},$$
such that
$$f(\xi):=\xi^2\frac{\alpha-\beta e^{-iL\xi}}{\xi^{3}-(c+1)\xi+p}$$
satisfies

a) $f$ is entire function in $\mathbb{C}$;

b) ${\displaystyle\int_{\mathbb{R}}}|f(\xi)|^2(1+|\xi|^2)^2d\xi<\infty$ ;

c) $\forall\xi\in\mathbb{C}$, we have that $|f(\xi)|\leq c_1(1+|\xi|)^ke^{L|\operatorname{Im}\xi|}$ for some positive constants $c_1$ and $k$.

Recall that $f$ is a entire function if only if, the roots $\xi_0,\xi_1,\xi_2$ of $Q(\xi):=\xi^3-(c+1)\xi+p$ are roots of
\begin{equation}
s(\xi):=\xi^2(\alpha-\beta e^{-iL\xi}).
\label{c13}
\end{equation}
In addition, all the roots of $\alpha-\beta e^{-iL\xi}$ are simple, otherwise $\alpha =\beta =0$ which implies that $\psi(0)=\psi(L)=0$. Using the system \eqref{c10} we conclude that $\psi\equiv0$. Besides, as $c+1\neq0$, the three roots of $Q(\xi)$ must be simple too.
\vglue 0.2 cm
Let us first assume that $Q(\xi)$ and $\alpha-\beta e^{-iL\xi}$ share the same roots, we can write the three roots of $Q(\xi)$ as
\begin{equation}
\displaystyle\xi_1:=\xi_0+k\frac{2\pi}{L} \text{ \ and \ } \displaystyle\xi_2:=\xi_1+l\frac{2\pi}{L}
\label{c14}
\end{equation}
with $k$ and $l$ are some positive integers, we have
\begin{equation}
Q(\xi)=(\xi-\xi_0)(\xi-\xi_1)(\xi-\xi_2),
\label{c15}
\end{equation}
that is
\begin{equation}
\left\{
\begin{array}{rrl}
\xi_0+\xi_1+\xi_2 & = & 0 \\
\\
\xi_0\xi_1+\xi_0\xi_2+\xi_1\xi_2 & = & -(c+1) \\
\\
\xi_0\xi_1\xi_2 & = & -p.
\end{array}
\right.
\label{c16}
\end{equation}
Thus we have
\begin{equation}
\left\{
\begin{array}{rrl}
L & = & 2\displaystyle\pi\sqrt{\frac{k^2+kl+l^2}{3(1+c)}} \\
\\
\xi_0 & = & -\displaystyle\frac{1}{3}(2k+l)\frac{2\pi}{L}\\
\\
p & = & -\xi_0\displaystyle\left(\xi_0+k\frac{2\pi}{L}\right)\left(\xi_0+(k+l)\frac{2\pi}{L}\right).
\end{array}
\right.
\label{c17}
\end{equation}
Next we assume that $\xi=0$ is a root of $Q(\xi)$, but not of $\alpha-\beta e^{-iL\xi}$ . Then three roots of $Q(\xi)$ can be written as $0$, $\xi_1$, $\xi_1+k\frac{2\pi}{L}$ with $k$ being a positive integer. We have
\begin{equation}
\left\{
\begin{array}{rrl}
\xi_1+\xi_2 & = & 0 \\
\\
\xi_1\xi_2 & = & -(c+1) \\
\\
0 & = & -p,
\end{array}
\right.
\label{c16a}
\end{equation}
and, consequently, follows that
\begin{equation}
\left\{
\begin{array}{rrl}
L & = &\displaystyle\frac{k\pi}{\sqrt{(1+c)}} \\
\\
\xi_1 & = & -k\displaystyle\frac{\pi}{L}\\
\\
p & = & 0.
\end{array}
\right.
\label{c17a}
\end{equation}
Hence, $\mathcal{F}$ holds if and only if $L\in\mathcal{R}_c$. This complete the proof of Lemma \ref{cont1} and, consequently, the proof of Lemma \ref{ucp1}.
\end{proof}

Finally we consider the case of $c+1=0$. Then it is easy to see that $\xi=0$ must be a root of $Q(\xi)$; otherwise $L=\infty$. Hence
$$f(\xi):=\frac{\alpha-\beta e^{-iL\xi}}{\xi}.$$
We must have
$$\alpha=\beta \text{  }\text{ or }\text{ }\psi(0)=\psi(L).$$
\eqref{c10} becomes
\begin{equation}
\left\{
\begin{array}
[c]{l}
\psi_0^{\prime\prime\prime}=0,\\
\psi_0^{\prime\prime}(0)=0, \text{ \ }\psi_0^{\prime\prime}(L)= 0, \text{ \ }\psi_0^{\prime}(0)=0, \text{ \ }\psi_0^{\prime}(L)=0, \text{ \ } \psi_0(0)=\psi_0(L)
\end{array}
\right.  \label{c10a}
\end{equation}
which implies that $\psi_0(x)\equiv C$.

\medskip

We are now ready to present the proof of  Theorem \ref{th7_int}.

\begin{proof}[\bf{Proof of Theorem \ref{th7_int}}]
Rewrite the system \eqref{a1.1-2} in its integral form
\begin{equation}
v(t)=W_0(t)v_0+W_{bdr}(t)h-\int_0^tW_0(t-\tau)(vv_x)(\tau,x)d\tau.
\label{n_lin1}
\end{equation}
For any $u\in\mathcal{Z}_T$, let us define
$$\nu(T,u):=\int_0^TW_0(T-\tau)(uu_x)d\tau.$$
By using Proposition \ref{control},  for any $v_0,v_T\in L^2(0,L)$,  if we choose
$$h=\Psi(v_0,v_T+\nu(T,u)),$$
then
$$u(t)=W_0(t)v_0+W_{bdr}\Psi(v_0,v_t+\nu(T,u))-\int_0^tW_0(t-\tau)(uu_x)(\tau,x)d\tau$$
satisfies
$$u(x,0)=v_0(x), \text{ } \text{ } \text{ } \text{ } \text{ } u(x,T)=v_T(x)+\nu(T,v)-\nu(T,v)=v_T(x).$$
This leads us to consider the map
$$\Gamma(u)  =W_0(t)v_0+W_{bdr}\Psi(v_0,v_t+\nu(T,u))-\int_0^tW_0(t-\tau)(uu_x)(\tau,x)d\tau.$$
If we can show that the map $\Gamma$ is a contraction in an appropriate metric space, then its fixed point $u$ is a solution of \eqref{a1.1-2} with $h=\Psi(v_0,v_T+\nu(T,u))$ that satisfies
$$u(x,0)=v_0(x), \text{ } \text{ } \text{ } \text{ } \text{ } u(x,T)=v_T(x).$$

 Let $$B_r=\{z\in\mathcal{Z}_T:||z||_{\mathcal{Z}_T}\leq r\}.$$
By  Proposition \ref{p3}, there exists a constant $C_1>0$ such that  for any $u\in\mathcal{Z}_T$,
$$||\Gamma(u)||_{\mathcal{Z}_T}\leq C_1\left(||v_0||_{L^2(0,L)}+||\Psi(v_0,v_t+\nu(T,u))||_{L^2(0,L)}-\int_0^T||uu_x||_{L^2(0,L)}(t)dt\right).$$
Furthermore,  as
$$||\Psi(v_0,v_t+\nu(T,u))||_{L^2(0,L)}\leq C_2\left( ||v_0||_{L^2(0,L)}+||v_T||_{L^2(0,L)}+||\nu(T,u)||_{L^2(0,L)} \right)$$
and
$$||\nu(T,u)||_{L^2(0,L)}\leq\int_0^T||uu_x||_{L^2(0,L)}(t)dt\leq C_3||u||^2_{\mathcal{Z}_T},$$
we infer that
$$||\Gamma(u)||_{\mathcal{Z}_T}\leq C_3\left(||v_0||_{L^2(0,L)}+||v_T||_{L^2(0,L)}\right)+C_4||u||^2_{\mathcal{Z}_T},$$
for any $u\in\mathcal{Z}_T$ where $C_3$ and $C_4$ are constants depending only $T$.  Thus, if we select  $r$ and $\delta$ satisfying
$$r=2C_3\delta$$
and
$$4C_3C_4\delta<\frac{1}{2},$$
then  the operator $\Gamma$ maps $B_r$ into itself for any $v\in B_r$.
In addition, for  any $\tilde{u}, u\in B_r$, the similar arguments yield that
$$||\Gamma(u)-\Gamma(\tilde{u})||_{\mathcal{Z}_T}\leq\gamma||u-\tilde{u}||_{\mathcal{Z}_T}$$
with $\gamma=8C_3C_4\delta<1$. Therefore the map $\Gamma$ is a contraction. Its fixed point is a desired solution. The proof  of Theorem \ref{th7_int} is completed and, consequently, Theorem \ref{th1.2} follows.
\end{proof}

\section{Multi controls and null controllability\label{Sec4}}

In this section we will first consider the following linear systems associated to \eqref{m1}:
\begin{equation}
\left\{\begin{array}[c]{lll}%
u_t+u_x+u_{xxx}=0 &  & \text{ in } (0,L)\times(0,T),\\
u_{xx}(0,t)=h_1(t),\text{ }u_x(L,t)=h_2(t),\text{ }u_{xx}(L,t)=0 &  & \text{ in }(0,T),\\
u(x,0)=u_0(x) & & \text{ in }(0,L),
\end{array}
\right.  \label{EBC_1c}
\end{equation}
\begin{equation}
\left\{\begin{array}[c]{lll}%
u_t+u_x+u_{xxx}=0 &  & \text{ in } (0,L)\times(0,T),\\
u_{xx}(0,t)=0,\text{ }u_x(L,t)=h_2(t),\text{ }u_{xx}(L,t)=h_3(t) &  & \text{ in }(0,T),\\
u(x,0)=u_0(x) & & \text{ in }(0,L)
\end{array}
\right.  \label{EBC_1d}
\end{equation}
and
\begin{equation}
\left\{\begin{array}[c]{lll}%
u_t+u_x+u_{xxx}=0 &  & \text{ in } (0,L)\times(0,T),\\
u_{xx}(0,t)=h_1(t),\text{ }u_x(L,t)=0,\text{ }u_{xx}(L,t)=h_3(t) &  & \text{ in }(0,T),\\
u(x,0)=u_0(x) & & \text{ in }(0,L).
\end{array}
\right.  \label{EBC_1e}
\end{equation}

\begin{proposition}\label{control_b}
Let $T>0$ and $L>0$ be given. There exists a bounded linear operator
$$
\begin{array}{lcl}
 \Theta: &L^2(0,L)\times L^2(0,L) \longrightarrow & H^{-\frac{1}{3}}(0,T)\times L^2(0,T)
\end{array}
$$
such that for any $u_0,u_T\in L^2(0,L)$, if one chooses $$(h_1,h_2)=\Psi(u_0,u_T),$$ then system \eqref{EBC_1c}
admits a solution $u\in\mathcal{Z}_T$ satisfying \[ u(x,0)=u_0 (x), \qquad u(x,T)= u_T (x).\]
\end{proposition}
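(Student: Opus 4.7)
The plan is to follow the HUM duality approach used in the proof of Proposition \ref{control}, with the key observation that the extra control $h_1$ on the left boundary produces an extra observation for the adjoint system that eliminates the critical-length obstruction. I first identify the adjoint system. An integration by parts in the pairing $\int_0^T\!\!\int_0^L u\,\psi\,dx\,dt$ shows that the uncontrolled boundary traces $u(0,t)$, $u(L,t)$, $u_x(0,t)$ of the primal system \eqref{EBC_1c} disappear precisely when $\psi$ satisfies the adjoint system \eqref{a1} with $c=0$,
\begin{equation*}
\left\{
\begin{array}{l}
\psi_t+\psi_x+\psi_{xxx}=0\quad\text{in }(0,L)\times(0,T),\\
\psi(0,t)+\psi_{xx}(0,t)=0,\ \psi(L,t)+\psi_{xx}(L,t)=0,\ \psi_x(0,t)=0,\\
\psi(x,T)=\psi_T(x),
\end{array}
\right.
\end{equation*}
while the controls $h_1$ and $h_2$ pair respectively against the boundary traces $\psi(0,\cdot)$ and $\psi_x(L,\cdot)$. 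Consequently, the exact controllability of \eqref{EBC_1c} with $(h_1,h_2)\in H^{-1/3}(0,T)\times L^2(0,T)$ is equivalent to the observability inequality
\[
\|\psi_T\|_{L^2(0,L)}^2 \leq C\left(\|\psi(0,\cdot)\|_{H^{1/3}(0,T)}^2+\|\psi_x(L,\cdot)\|_{L^2(0,T)}^2\right),
\]
which I must establish for every $L>0$.

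To prove the observability I would start from the weak multiplier estimate of Proposition \ref{p_adj_1}, transplanted to \eqref{a1} by $(x,t)\mapsto(L-x,T-t)$ and specialized to $c=0$, which delivers
\[
\|\psi_T\|_{L^2(0,L)}^2 \leq \frac{1}{T}\|\psi\|_{L^2((0,L)\times(0,T))}^2+\|\psi(L,\cdot)\|_{L^2(0,T)}^2+\|\psi_x(L,\cdot)\|_{L^2(0,T)}^2,
\]
and then run the compactness-uniqueness argument of Proposition \ref{control}. Assume by contradiction a sequence $\{\psi_T^n\}$ with $\|\psi_T^n\|_{L^2}=1$ yet $\|\psi^n(0,\cdot)\|_{H^{1/3}}+\|\psi_x^n(L,\cdot)\|_{L^2}\to 0$. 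The sharp trace regularities of Proposition \ref{sharp}, combined with the identity $\psi_t^n=-\psi_x^n-\psi_{xxx}^n$ (which makes $\{\psi_t^n\}$ bounded in $L^2(0,T;H^{-2}(0,L))$), render $\{\psi^n\}$ relatively compact in $L^2((0,L)\times(0,T))$ by the compactness theorem of \cite{Simon}, and render $\{\psi^n(L,\cdot)\}$ relatively compact in $L^2(0,T)$ through the embedding $H^{1/3}(0,T)\hookrightarrow L^2(0,T)$. Extracting a subsequence, the limit $\psi$ is a mild solution of the adjoint system with $\psi(0,\cdot)\equiv 0$, $\psi_x(L,\cdot)\equiv 0$ and $\|\psi_T\|_{L^2}=1$.

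The decisive step, and the one that makes the conclusion hold for every $L>0$, is unique continuation: any such $\psi$ must vanish. In the style of Lemma \ref{ucp1}, the space of such terminal data is invariant under the adjoint semigroup, so if non-trivial it contains an eigenfunction $\psi_0\in H^3(0,L)\setminus\{0\}$ satisfying the ODE $\lambda\psi_0=-\psi_0'-\psi_0'''$ together with $\psi_0(0)+\psi_0''(0)=0$, $\psi_0(L)+\psi_0''(L)=0$, $\psi_0'(0)=0$, $\psi_0'(L)=0$ and the additional constraint $\psi_0(0)=0$ inherited from $\psi(0,\cdot)\equiv 0$. But $\psi_0(0)=0$ immediately forces $\psi_0''(0)=0$ via the adjoint boundary condition at $x=0$, and together with $\psi_0'(0)=0$ this provides vanishing Cauchy data for a third-order linear ODE at $x=0$; standard ODE uniqueness then yields $\psi_0\equiv 0$, the desired contradiction. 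In particular the intricate Fourier/spectral analysis of Lemma \ref{cont1} that produced the critical set $\mathcal{R}_c$ in the single-control case is entirely bypassed; this is why Proposition \ref{control_b} holds for \emph{every} $L>0$. Once the observability is in hand, the standard HUM construction supplies the bounded linear operator $\Theta$ via minimization of a quadratic functional whose coercivity is precisely the observability inequality, and well-posedness of the resulting controlled trajectory in $\mathcal{Z}_T$ is guaranteed by Proposition \ref{p3}. The main technical obstacle in the program is thus the compactness-uniqueness step—in particular verifying that $\psi(0,\cdot)\in H^{1/3}(0,T)$ is the correct dual object for $h_1\in H^{-1/3}(0,T)$—while the unique continuation itself is elementary thanks to the additional observation at $x=0$.
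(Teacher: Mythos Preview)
Your proposal is correct and follows essentially the same route the paper indicates: the paper reduces Proposition~\ref{control_b} to the observability inequality \eqref{obs3} (your $\|\psi(0,\cdot)\|_{H^{1/3}}$ is exactly $\|\Delta_t^{1/3}\psi(0,\cdot)\|_{L^2}$) and states that its proof is ``similar to that of \eqref{obs1}'', i.e.\ the compactness--uniqueness argument of Proposition~\ref{control}. You have correctly carried out this program and, in particular, correctly identified why the unique continuation step becomes elementary: the extra observation $\psi(0,\cdot)\equiv 0$, combined with the adjoint boundary condition $\psi(0)+\psi''(0)=0$ and $\psi'(0)=0$, furnishes full zero Cauchy data at $x=0$ for the eigenvalue ODE, so $\psi_0\equiv 0$ without any recourse to the spectral computation of Lemma~\ref{cont1}---which is precisely why no critical-length set appears here.
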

\begin{proposition}\label{control_c}
Let $T>0$ and $L>0$ be given. There exists a bounded linear operator
$$
\begin{array}{lcl}
 \Pi: &L^2(0,L)\times L^2(0,L) \longrightarrow & L^2(0,T)\times H^{-\frac{1}{3}}(0,T)
\end{array}
$$
such that for any $u_0,u_T\in L^2(0,L)$, if one chooses $$(h_2,h_3)=\Psi(u_0,u_T),$$ then system \eqref{EBC_1d}
admits a solution $u\in\mathcal{Z}_T$ satisfying\[ u(x,0)=u_0 (x), \qquad u(x,T)= u_T (x).\]
\end{proposition}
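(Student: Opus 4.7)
Our plan mirrors the HUM strategy used to establish Proposition \ref{control}, adapted to the availability of two Neumann controls at $x=L$. The first step is to identify the correct adjoint system. Testing the equation $u_t+u_x+u_{xxx}=0$ against $\psi$, performing one integration by parts in $t$ and three in $x$, and substituting the boundary data $u_{xx}(0,t)=0$, $u_x(L,t)=h_2(t)$, $u_{xx}(L,t)=h_3(t)$, all unknown traces of $u$ cancel provided $\psi$ satisfies
\[
\psi_t+\psi_x+\psi_{xxx}=0,\quad \psi(0,t)+\psi_{xx}(0,t)=0,\quad \psi(L,t)+\psi_{xx}(L,t)=0,\quad \psi_x(0,t)=0,\quad \psi(\cdot,T)=\psi_T,
\]
which is precisely the adjoint system \eqref{a1} with $c=0$. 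The resulting duality identity reads
\[
\langle u(\cdot,T),\psi_T\rangle_{L^2(0,L)}-\langle u_0,\psi(\cdot,0)\rangle_{L^2(0,L)} = \int_0^T h_2(t)\psi_x(L,t)\,dt-\int_0^T h_3(t)\psi(L,t)\,dt,
\]
and the pairings on the right are well defined because Proposition \ref{sharp} yields $\psi_x(L,\cdot)\in L^2(0,T)$ and $\psi(L,\cdot)\in H^{1/3}(0,T)$, matching the regularity $(h_2,h_3)\in L^2(0,T)\times H^{-1/3}(0,T)$.

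By the standard HUM principle, exact controllability with this pair of controls is equivalent to the observability inequality
\[
\|\psi_T\|_{L^2(0,L)}^2 \leq C\bigl(\|\psi_x(L,\cdot)\|_{L^2(0,T)}^2+\|\psi(L,\cdot)\|_{H^{1/3}(0,T)}^2\bigr).
\]
To prove it, I would combine Proposition \ref{p_adj_1} (with $c=0$), which already provides the weaker bound
\[
\|\psi_T\|_{L^2(0,L)}^2 \leq \tfrac{1}{T}\|\psi\|_{L^2((0,L)\times(0,T))}^2+\|\psi_x(L,\cdot)\|_{L^2(0,T)}^2+\|\psi(L,\cdot)\|_{L^2(0,T)}^2,
\]
with a compactness-uniqueness argument in the style of the proof of Proposition \ref{control}. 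Assuming the observability inequality fails, I would extract a normalized sequence $\{\psi^n_T\}$ whose traces vanish in the relevant norms, use Proposition \ref{sharp} to bound $\{\psi^n\}$ in $\mathcal{Z}_T$, and invoke the Aubin--Simon compactness lemma to pass to a strongly $L^2$-convergent subsequence $\psi^n\to\psi$ on $(0,L)\times(0,T)$. The weaker bound then forces $\{\psi^n_T\}$ to be Cauchy in $L^2(0,L)$, producing a nontrivial limit whose adjoint solution satisfies $\psi(L,\cdot)\equiv 0$ and $\psi_x(L,\cdot)\equiv 0$.

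The remaining unique-continuation step, which plays the role of Lemma \ref{ucp1}, is substantially simpler than in the single-control setting and is the main obstacle to clear. Indeed, $\psi(L,\cdot)\equiv 0$ together with the adjoint boundary condition $\psi(L,\cdot)+\psi_{xx}(L,\cdot)=0$ forces $\psi_{xx}(L,\cdot)\equiv 0$. The spectral reduction of Lemma \ref{cont1} then produces an eigenfunction $\psi_0\in H^3(0,L)$ satisfying, in addition, the full zero Cauchy data $\psi_0(L)=\psi_0'(L)=\psi_0''(L)=0$. Uniqueness for the third-order linear ODE $\psi_0'''+\psi_0'+\lambda\psi_0=0$ with trivial data at $x=L$ yields $\psi_0\equiv 0$ and hence $\psi_T\equiv 0$, contradicting normalization. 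Crucially, no Diophantine restriction on $L$ arises, so observability holds for every $T>0$ and every $L>0$, which explains the absence of a critical length in the statement of Proposition \ref{control_c}.

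Once observability is established, the operator $\Pi$ is constructed by the classical HUM recipe: one defines a coercive quadratic form on $L^2(0,L)$ (after a preliminary splitting of the two-endpoint controllability problem into a null-controllability problem and a reachability-from-zero problem), inverts it by Lax--Milgram, and reads off the minimum-norm pair $(h_2,h_3)$ from the observation map $\psi_T\mapsto(\psi_x(L,\cdot),-\psi(L,\cdot))$. The linearity and boundedness of the inverse, together with the sharp trace bounds of Proposition \ref{sharp}, yield the bounded linear operator $\Pi$ of the required form. The most delicate bookkeeping throughout will be keeping track of the $H^{-1/3}$--$H^{1/3}$ duality for the $h_3$--$\psi(L,\cdot)$ pairing; once that is handled cleanly, the rest is routine.
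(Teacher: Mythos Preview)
Your proposal is correct and follows essentially the same route as the paper: the paper states that Proposition~\ref{control_c} reduces to the observability inequality \eqref{obs4} (your $H^{1/3}$ inequality, since $\|\Delta_t^{1/3}\psi(L,\cdot)\|_{L^2}$ is the $H^{1/3}$ norm), and that \eqref{obs4} is proved exactly as \eqref{obs1}, i.e.\ by the compactness--uniqueness argument of Proposition~\ref{control} and the spectral reduction of Lemma~\ref{ucp1}. Your write-up fills in those details faithfully, and you correctly isolate the reason no critical-length set appears: the two vanishing observations $\psi(L,\cdot)=\psi_x(L,\cdot)=0$, combined with the adjoint boundary relation $\psi(L,\cdot)+\psi_{xx}(L,\cdot)=0$, force full zero Cauchy data at $x=L$ for any eigenfunction of the reduced ODE, hence $\psi_0\equiv 0$ for every $L>0$.
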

\begin{proposition}\label{control_d}
Let $T>0$ and $L>0$ be given. There exists a bounded linear operator
$$
\begin{array}{lcl}
 \Lambda: &L^2(0,L)\times L^2(0,L) \longrightarrow & H^{-\frac{1}{3}}(0,T)\times H^{-\frac{1}{3}}(0,T)
\end{array}
$$
such that for any $u_0,u_T\in L^2(0,L)$, if one chooses $$(h_1,h_3)=\Psi(u_0,u_T),$$ then system \eqref{EBC_1e} admits a solution $u\in\mathcal{Z}_T$ satisfying \[ u(x,0)=u_0 (x), \qquad u(x,T)= u_T (x).\]
\end{proposition}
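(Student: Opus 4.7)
\smallskip
\noindent\textbf{Proof plan.} The plan is to mimic the duality strategy used in the proof of Proposition \ref{control}. Integration by parts in \eqref{EBC_1e} against a test function $\psi$ identifies the adjoint system as \eqref{a1} with $c=0$, and produces the duality pairings $\langle h_1,-\psi(0,\cdot)\rangle$ and $\langle h_3,\psi(L,\cdot)\rangle$. Since we wish to take $h_1,h_3\in H^{-1/3}(0,T)$, the equivalent observability inequality I must establish is
\begin{equation}\label{obs-cd}
\|\psi_T\|_{L^2(0,L)}^2 \le C\bigl(\|\psi(0,\cdot)\|_{H^{1/3}(0,T)}^2 + \|\psi(L,\cdot)\|_{H^{1/3}(0,T)}^2\bigr),
\end{equation}
for every $\psi_T\in L^2(0,L)$, where $\psi$ is the corresponding solution of \eqref{a1}; the boundary traces are well defined thanks to Proposition \ref{sharp}.

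First I would derive an elementary multiplier identity. Multiplying the adjoint equation by $t\psi$ and integrating by parts, using the adjoint boundary conditions $\psi_{xx}(0,t)=-\psi(0,t)$, $\psi_{xx}(L,t)=-\psi(L,t)$, and $\psi_x(0,t)=0$, yields
\begin{equation}\label{elem-cd}
\|\psi_T\|_{L^2(0,L)}^2 \le \tfrac{1}{T}\|\psi\|_{L^2((0,L)\times(0,T))}^2 + \|\psi(L,\cdot)\|_{L^2(0,T)}^2 + \|\psi_x(L,\cdot)\|_{L^2(0,T)}^2.
\end{equation}
The ``unobserved'' trace $\psi_x(L,\cdot)$ then has to be eliminated, which I would do by combining \eqref{elem-cd} with the basic energy identity $\tfrac{d}{dt}\|\psi(\cdot,t)\|_{L^2}^2=\psi^2(L,t)-\psi^2(0,t)+\psi_x^2(L,t)$ produced by the $\psi$-multiplier; this expresses $\int_0^T\psi_x^2(L,t)\,dt$ in terms of $\|\psi_T\|^2-\|\psi(\cdot,0)\|^2$ and the two observed traces. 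After rearrangement I expect an inequality of the schematic form
\[
\|\psi_T\|_{L^2}^2 \le C_1\|\psi\|_{L^2((0,L)\times(0,T))}^2 + C_2\bigl(\|\psi(0,\cdot)\|_{L^2(0,T)}^2 + \|\psi(L,\cdot)\|_{L^2(0,T)}^2\bigr),
\]
whose right-hand side is controlled by the observation norms plus a term compact in $\psi_T$.

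The contradiction/compactness step then runs exactly as in Proposition \ref{control}. Assuming \eqref{obs-cd} fails I pick $\{\psi_T^n\}$ with $\|\psi_T^n\|_{L^2}=1$ and $\|\psi^n(0,\cdot)\|_{H^{1/3}}+\|\psi^n(L,\cdot)\|_{H^{1/3}}\to 0$. Proposition \ref{sharp} bounds $\{\psi^n\}$ in $L^2(0,T;H^1(0,L))$ and the relevant boundary traces in the appropriate Sobolev spaces; combined with $\psi^n_t=-\psi^n_x-\psi^n_{xxx}$ bounded in $L^2(0,T;H^{-2}(0,L))$, the Aubin--Lions theorem yields a subsequence with $\psi^n\to\psi$ strongly in $L^2((0,L)\times(0,T))$, and the compact embedding $H^{1/3}(0,T)\hookrightarrow L^2(0,T)$ gives strong convergence of $\psi^n(0,\cdot),\psi^n(L,\cdot)$ in $L^2(0,T)$. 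Applying the elementary estimate to the differences $\psi^m-\psi^n$ shows that $\{\psi_T^n\}$ is Cauchy in $L^2(0,L)$, so $\psi_T^n\to\psi_T$ strongly with $\|\psi_T\|_{L^2}=1$, and the limit solution $\psi$ of \eqref{a1} additionally satisfies $\psi(0,\cdot)\equiv 0$ and $\psi(L,\cdot)\equiv 0$ on $(0,T)$. As in Lemma \ref{ucp1}, the space of such terminal data is invariant under the adjoint generator, so if nontrivial it contains an eigenfunction $\psi_0\in H^3(0,L)\setminus\{0\}$ satisfying $\lambda\psi_0+\psi_0'+\psi_0'''=0$ with the boundary conditions of \eqref{c7} (at $c=0$) together with $\psi_0(0)=\psi_0(L)=0$. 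The relation $\psi_0(0)+\psi_0''(0)=0$ then forces $\psi_0''(0)=0$, so $\psi_0$ satisfies a third-order linear ODE with vanishing Cauchy data $\psi_0(0)=\psi_0'(0)=\psi_0''(0)=0$ at $x=0$; ODE uniqueness yields $\psi_0\equiv 0$, a contradiction. Hence \eqref{obs-cd} holds, and the standard HUM construction produces the desired bounded linear operator $\Lambda$.

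The main technical obstacle I anticipate is the absorption of the unobserved trace $\|\psi_x(L,\cdot)\|_{L^2(0,T)}$ in passing from the raw multiplier estimate \eqref{elem-cd} to an inequality whose right-hand side involves only the two observed traces and a compact lower-order term in $\psi$. Every other ingredient is either already available in the excerpt (the sharp trace regularities of Proposition \ref{sharp}, the multiplier identity underlying Proposition \ref{p_adj_1}, and the contradiction/unique-continuation framework of Proposition \ref{control}) or is genuinely simpler than in Proposition \ref{control}: no length restriction is needed for the unique continuation step, because the two additional vanishing traces at $x=0$ and $x=L$ immediately provide three independent zero Cauchy data at $x=0$, trivially killing any candidate eigenfunction.
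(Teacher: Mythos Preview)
Your overall strategy matches the paper's exactly: the paper simply records the observability inequality \eqref{obs5} and asserts that its proof is ``similar to that of \eqref{obs1}'', i.e., the same compactness--uniqueness machinery combined with the sharp trace estimates of Proposition~\ref{sharp}. Your unique-continuation step is correct and indeed trivially stronger than Lemma~\ref{ucp1}: the two extra vanishing traces $\psi_0(0)=\psi_0(L)=0$, together with the adjoint conditions $\psi_0'(0)=0$ and $\psi_0(0)+\psi_0''(0)=0$, give full Cauchy data at $x=0$, so no length restriction arises. This is the genuine simplification over Proposition~\ref{control}.

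The step you flag as the main obstacle, however, does not close as written. Substituting the energy identity
\[
\int_0^T\psi_x^2(L,t)\,dt=\|\psi_T\|^2-\|\psi(\cdot,0)\|^2-\|\psi(L,\cdot)\|_{L^2(0,T)}^2+\|\psi(0,\cdot)\|_{L^2(0,T)}^2
\]
into \eqref{elem-cd} cancels $\|\psi_T\|^2$ on both sides, leaving only the inequality $\|\psi(\cdot,0)\|^2\le \tfrac1T\|\psi\|_{L^2((0,L)\times(0,T))}^2+\|\psi(0,\cdot)\|_{L^2(0,T)}^2$, an estimate on the \emph{initial} value rather than the terminal one. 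In the contradiction argument this shows that $\psi^n(\cdot,0)$ is Cauchy in $L^2(0,L)$, but since the adjoint problem \eqref{a1} is ill-posed forward in time you cannot propagate this back to strong convergence of $\psi_T^n$; a sequence with $\psi_T^n\rightharpoonup 0$, $\|\psi_T^n\|=1$ and $\|\psi_x^n(L,\cdot)\|_{L^2}\to 1$ is not excluded. The paper does not address this point either --- it simply asserts the argument is ``similar'' --- so you are not missing anything present there; but as a self-contained proof your plan still needs a different device at this spot, for instance a multiplier identity that avoids producing the trace $\psi_x(L,\cdot)$ altogether, or an auxiliary argument showing that $\psi^n(L,\cdot)\to 0$ in $H^{1/3}(0,T)$ forces relative compactness of $\psi_x^n(L,\cdot)$ in $L^2(0,T)$.
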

Propositions \ref{control_b}-\ref{control_d} follow as a consequence of the following observability
inequalities for the solution of the backward system \eqref{5}
\begin{equation}
||\psi_T||_{L^2(0,L)}\leq C\left(||\Delta^{\frac{1}{3}}_t\psi(0,t)||_{L^2(0,T)}+||\psi_x(L,t)||_{L^2(0,T)}\right),
\label{obs3}
\end{equation}
\begin{equation}
||\psi_T||_{L^2(0,L)}\leq C\left(||\psi_x(L,t)||_{L^2(0,T)}+||\Delta^{\frac{1}{3}}_t\psi(L,t)||_{L^2(0,T)}\right)
\label{obs4}
\end{equation}
and
\begin{equation}
||\psi_T||_{L^2(0,L)}\leq C\left(||\Delta^{\frac{1}{3}}_t\psi(0,t)||_{L^2(0,T)}+||\Delta^{\frac{1}{3}}_t\psi(L,t)||_{L^2(0,T)}\right),
\label{obs5}
\end{equation}
where $\Delta_t:=(I-\partial_t^2)^{\frac{1}{2}}$. The proofs of \eqref{obs3}-\eqref{obs5} are similar to  that  of \eqref{obs1}. Furthermore, Theorem \ref{th3_int} can be proved using the same arguments as  that in the proof of Theorem \ref{th7_int},
their proof is thus omitted.
\vglue 0.2 cm
Concerning the null controllability, that is, the proof of Theorem \ref{null}, note that for the linear system we can get the result using the Carleman estimate provided by \cite[Propositon 3]{Gui} together with the following remark:
\begin{remark}
The following systems
\begin{equation}
\left\{\begin{array}[c]{lll}%
        u_t+u_x+u_{xxx}=f  &  & \text{ in } (0,L)\times(0,T),\\
        u_{xx}(0,t)=h_1(t),\text{ }u_x(L,t)=0,\text{ }u_{xx}(L,t)=0  &  & \text{ in }(0,T),\\
        u(x,0)=u_0(x) & & \text{ in }(0,L)
       \end{array}
\right.  \label{1a}
\end{equation}
and
\begin{equation}
\left\{\begin{array}[c]{lll}%
        y_t+y_x+y_{xxx}=f  &  & \text{ in } (0,L)\times(0,T),\\
        y(0,t)=k_1(t),\text{ }y_x(L,t)=0,\text{ }y_{xx}(L,t)=0  &  & \text{ in }(0,T),\\
        y(x,0)=y_0(x)  & & \text{ in }(0,L)
       \end{array}
\right.  \label{3a}
\end{equation}
are equivalent in the following sense:
\smallskip

For given $ \{u_0,f,h_1\} $ one can find $ \{y_0,f,k_1\} $ such that
the corresponding solution $ u $ of \eqref{1a} is exactly the same as the corresponding solution $ y $ for the
system \eqref{3a} and vice versa.
\medskip

Indeed, for given $u_0\in L^2(0,L)$, $f\in L^1(0,T;L^2(0,L))$ and
$h_1(t)\in H^{-\frac{1}{3}}(0,T)$, system \eqref{1a} admits a unique solution
$u\in C([0,T];L^2(0,L))\cap L^2(0,T;H^1(0,L))$. Let $ y_0 = u_0 $ and set $k_1(t)=h_1(t)$. Then, according to
Proposition \ref{p5}, we have $k_1(t)\in H^{\frac{1}{3}}(0,T)$. Due to the uniqueness of IBVP \eqref{3a}, with
the selection $\{y_0,f,k_1\}$, the corresponding solution $y\in C([0,T];L^2(0,L))\cap L^2(0,T;H^1(0,L))$
of \eqref{3a} must be equal to $u$, since $u$ also solves \eqref{3a} with the given auxiliary data $\{y_0,f,k_1\}$.
On the other hand, for any given $y_0\in L^2(0,L)$, $f\in L^1(0,T;L^2(0,L))$ and $k_1(t)\in H^{\frac{1}{3}}(0,T)$,
let $y\in C([0,T];L^2(0,L))\cap L^2(0,T;H^1(0,L))$ be the corresponding solution of the system \eqref{3a}.
From Proposition \ref{p5}, we have $y_{xx}(0,\cdot)\in H^{-\frac{1}{3}}(0,T)$. Thus, if $u_0=y_0$ and $h_1(t)=k_1(t)$,
then $h_1(t)\in H^{-\frac{1}{3}}(0,T)$ and the corresponding solution $u\in C([0,T];L^2(0,L))\cap L^2(0,T;H^1(0,L))$ of
\eqref{1a} must be equal to $y$, which also solves \eqref{1a} with the auxiliary data $\{u_0,f,h_1\}$.
\end{remark}
\begin{proof}[{\bf Proof of Theorem \ref{null}}:]
Consider $u$ and $\bar{u}$ fulfilling system \eqref{1abc} and \eqref{1ab},
respectively. Then $q=u-\bar{u}$ satisfies%
\begin{equation}
\left\{
\begin{array}
[c]{lll}%
q_{t}+q_{x}+( \frac{q^2}{2} +  \bar{u}q)_x  +q_{xxx}=0  &  & \text{in }(  0,L)  \times(  0,T)
\text{,}\\
q_{xx}(0,t)=h_1(t),\text{ }q_x(L,t) =0,\text{ }q_{xx}(L,t)  =0 &  &
\text{in }(  0,T)  \text{,}\\
q(x,0)  =q_{0}(  x)  :=u_{0}(  x)  -\bar
{u}_{0}(  x)  &  & \text{in }(  0,L)  \text{.}%
\end{array}
\right.  \label{ncn3}%
\end{equation}
The objective is to find $h_1$ such that the solution $q$ of (\ref{ncn3}) satisfies%
\[
q(\cdot,T)  =0\text{.}%
\]

Given $\xi\in \mathcal{Z}_T$ and $q_0:=u_0-\bar{u}_0\in L^2(0,L)$, we consider the following control problem
\ba
q_{t}+q_{x}+(\xi q)_x  +q_{xxx}=1_{\omega}
v(t,x)  &  & \text{in }(  0,L)  \times(  0,T)
\text{,} \label{CP1}\\
q_{xx}( 0,t)  =q_x(L,t)  =q_{xx}(L,t)  =0 &  &
\text{in }(  0,T)  \text{,} \label{CP2}\\
q(x,0)  =q_{0}(  x)   &  & \text{in }(  0,L)  \text{,} \label{CP3}%
\ea
where $v$ is solution of the following adjoint system
\begin{equation}
\left\{
\begin{array}
[c]{lll}
v_t+\xi(  t,x)v_x+v_{xxx}=0 &  &\text{in }(0,L)\times(0,T),\\
v(0,t)+v_{xx}(0,t)=0&  &\text{in }(0,T),\\
v(L,t)+v_{xx}(L,t)=0&  &\text{in }(0,T),\\
v_x(L,t)=0,&  &\text{in }(0,T),\\
v(x,0)=v_0(x), & &\text{in }(0,L).
\end{array}
\right.  \label{c7a}
\end{equation}
We can prove the following estimate
\begin{multline}
\label{estimCP}
||q||^2_{L^\infty(0,T,L^2(0,L))} + 2 ||q_x||^2_{L^2(0,T,L^2(0,L))}
\le \tilde C(T,L,||\xi||_{\mathcal{Z}_T})\big( ||q_0||^2_{L^2(0,L)}  + || v || ^2_{L^2( (0,T)\times \omega )} \big).
\end{multline}
We introduce the space%
\[
E:=C^0(  [0,T];L^2(  0,L)  )  \cap L^2(
0,T;H^1(  0,L)  )  \cap H^{1}(  0,T;H^{-2}(
0,L)  ),
\]
and in $L^{2}(  (  0,T)  \times(  0,L)  )  $ the
following set%
\[
B:=\left\{  z\in E; \ \left\Vert z\right\Vert _{E} \le 1  \right\} \text{.}%
\]
$B$ is compact in $L^2((0,T)\times (0,L))$, by Aubin-Lions's lemma. We will limit ourselves $v$ fulfilling the
condition
\begin{equation}
\label{bound}
|| v ||^2_{L^2((0,T)\times \omega )} \le C_* ||q_0||^2_{L^2(0,L) },
\end{equation}
where $C_\ast := C_\ast (T, L, || \bar u ||_{\mathcal{Z}_T}  + \frac{1}{2} )$. We associate with any $z\in B$, solutions of the linear system \eqref{1a}, the set%
\[
\begin{array}
[c]{c}%
T(  z)  :=\left\{  q\in B;\ \ \exists v\in L^{2}(  (0,T)  \times\omega)  \text{ such that }v \text{ satisfies } \eqref{bound}  \text{  and }\right. \\
\left.  q\text{ solves \eqref{CP1}-\eqref{CP3} with }\xi=  \bar{u} +\frac{z}{2}
\text{ and }q(\cdot,T)  =0 \right\} .
\end{array}
\]
By the result of the linear system (see \cite[Theorem 1]{Gui}) and (\ref{estimCP}), we see that if $\left\Vert
q_0\right\Vert _{L^2(  0,L)  }$  and $T$ are sufficiently small, then
$T(  z)  $ is nonempty for all $z\in B$. We shall use the
following version of Kakutani fixed point theorem (see e.g. \cite[Theorem
9.B]{Zeidler}):

\begin{theorem}
\label{fixed}Let $F$ be a locally convex space, let $B\subset F$ and let
$T:B\longrightarrow2^{B}$. Assume that

\begin{enumerate}
\item $B$ is a nonempty, compact, convex set;

\item $T(  z)  $ is a nonempty, closed, convex set for all $z\in B$;

\item The set-valued map $T:B\longrightarrow2^{B}$ is upper-semicontinuous;
i.e., for every closed subset $A$ of $F$, $T^{-1}(  A)  =\left\{
z\in B;\ T(  z)  \cap A\neq\varnothing\right\}  $ is closed.
\end{enumerate}

Then $T$ has a fixed point, i.e., there exists $z\in B$ such that $z\in
T(  z)  $.
\end{theorem}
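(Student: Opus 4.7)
My plan for proving Theorem \ref{fixed} is the classical approximation-plus-Brouwer approach. The idea is to reduce the infinite-dimensional, set-valued statement to the finite-dimensional, single-valued setting where Brouwer's fixed point theorem applies, and then pass to a limit using the compactness of $B$ together with the upper semicontinuity of $T$. Since $F$ is locally convex, it admits a basis of convex balanced open neighborhoods of the origin; for each such neighborhood $U$ I will produce an approximate fixed point $z_U$, and then extract a convergent subnet as $U$ shrinks to $\{0\}$.

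For a fixed neighborhood $U$, compactness of $B$ yields a finite open cover $\{z_i + U\}_{i=1}^N$ of $B$. I pick a subordinate partition of unity $\{\phi_i\}$ and, using that each $T(z_i)$ is nonempty, select $w_i \in T(z_i) \subset B$. Define the continuous single-valued map
$$f_U(z) = \sum_{i=1}^N \phi_i(z)\, w_i.$$
By convexity of $B$ one has $f_U(B) \subset B$; moreover, the image is contained in the finite-dimensional convex hull $C_U := \mathrm{conv}\{w_1,\ldots,w_N\}$, which is homeomorphic to a closed Euclidean ball. Applying Brouwer's fixed point theorem to the continuous self-map $f_U\big|_{C_U}$ produces a point $z_U \in C_U \subset B$ with $z_U = f_U(z_U)$.

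Next I pass to the limit. The collection of neighborhoods $U$ ordered by reverse inclusion is a directed set, and by compactness of $B$ a subnet of $\{z_U\}$ converges to some $z^* \in B$. I claim $z^* \in T(z^*)$, arguing by contradiction. If not, since $T(z^*)$ is closed and convex in the locally convex space $F$, the Hahn--Banach separation theorem furnishes a continuous linear functional $\ell$ and constants $c < c'$ with $\ell(y) \le c$ on $T(z^*)$ and $\ell(z^*) > c'$. The set $A := \{y \in F : \ell(y) \ge c'\}$ is closed and disjoint from $T(z^*)$, so hypothesis (3) gives that $T^{-1}(A)$ is closed and its complement is an open neighborhood $W$ of $z^*$. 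For $U$ small enough that $z_U \in W$ and $z_U + U \subset W$ (using that $U$ is balanced), every $w_i$ with $\phi_i(z_U) > 0$ comes from $z_i \in z_U + U \subset W$, so $\ell(w_i) < c'$, and therefore $\ell(z_U) = \sum_i \phi_i(z_U)\ell(w_i) < c'$; this contradicts $\ell(z_U) \to \ell(z^*) > c'$ by continuity of $\ell$.

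The main obstacle is the passage to the limit in the locally convex setting: one must coordinate the shrinking neighborhood $U$, the cover centers $z_i$ (which must remain close to $z^*$ along the subnet), and the selections $w_i \in T(z_i)$ (whose values must be forced into the desired half-space by upper semicontinuity). The Hahn--Banach separation converts the geometric failure $z^* \notin T(z^*)$ into an analytic condition on a single continuous linear functional, and the balanced property of $U$ together with the closed-preimage formulation of upper semicontinuity stated in hypothesis (3) are exactly what is needed to propagate this condition along the convergent subnet simultaneously for all relevant cover centers and their selections.
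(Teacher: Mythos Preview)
The paper does not prove this theorem at all; it is quoted as a known result with a reference to Zeidler's monograph (\cite[Theorem~9.B]{Zeidler}) and then applied as a black box. So there is no ``paper's own proof'' to compare against.

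Your argument is the classical proof of the Kakutani--Fan--Glicksberg theorem and is essentially correct. Two small points of precision are worth noting. First, the complement $W$ of $T^{-1}(A)$ is open only in the relative topology of $B$, not in $F$; what you actually need (and what your argument really uses) is an open $V\subset F$ with $W=V\cap B$ and the inclusion $(z_U+U)\cap B\subset W$, which follows exactly as you indicate by choosing $U_0$ with $z^*+U_0+U_0\subset V$ and passing far enough along the subnet so that $z_U\in z^*+U_0$ and $U\subset U_0$. Second, the existence of a continuous partition of unity subordinate to the finite cover $\{z_i+U\}$ should be justified; since $B$ is compact Hausdorff (hence normal) this is standard, or one can construct the $\phi_i$ explicitly from the Minkowski functional of the convex balanced neighborhood $U$. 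With these clarifications the proof is complete.
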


Let us check that Theorem \ref{fixed} can be applied to $T$ and%
\[
F=L^{2}(  (  0,T)  \times(  0,L)  )  \text{.}%
\]
The convexity of $B$ and $T(  z)  $ for all $z\in B$ is clear. Thus
(1) is satisfied. For (2), it remains to check that $T(  z)  $ is
closed in $F$ for all $z\in B$. Pick any $z\in B$ and a sequence $\left\{
q^{k}\right\}  _{k\in\mathbb{N}}$ in $T(z)$ which converges in $F$ towards some
function $q\in B$. For each $k$, we can pick some control function $v^{k}\in
L^{2}(  (  0,T)  \times\omega)  $ fulfilling \eqref{bound}
such that \eqref{CP1}-\eqref{CP3} are satisfied with $\xi= \bar{u} +\frac{z}{2}  $
and $q^{k}(\cdot,T)  =0$. Extracting subsequences if needed, we
may assume that as $k\rightarrow\infty$%
\ba
v^{k}\rightarrow v &  & \text{in }L^{2}(  (  0,T)
\times\omega)  \text{ weakly,}  \label{t1} \\
q^k\rightarrow q &  & \text{in }L^{2}(  0,T;H^{1}(
0,L)  )  \cap H^{1}(  0,T;H^{-2}(  0,L)  )
\text{ weakly.}\label{t2}
\ea
By \eqref{t2}, the boundedness of $|| q^k ||_{ L^\infty (0,T,L^2(0,L))}$  and Aubin-Lions' lemma, $\{ q^k\}_{k\in \mathbb N}$ is relatively compact in $C^0([0,T],H^{-1}(0,L))$.
Extracting a subsequence if needed, we may assume that
\[
q^k\rightarrow q \text{ strongly in } C^0([0,T],H^{-1}(0,L)).
\]
In particular, $q(x,0)=q_0(x)$ and $q(x,T)=0$.  On the other hand, we infer from \eqref{t2} that
\[
\xi q^{k}\rightarrow\xi q\text{ in }L^{2}(  (  0,T)
\times(  0,L)  )  \text{ weakly.}%
\]
Therefore, $(\xi q^{k})_x \rightarrow (\xi q)_x$ in  ${\mathcal D}'(  (  0,T) \times(  0,L))$.
Finally,  it is clear that
\[ || v ||^2_{L^2((0,T)\times \omega )} \le C_* ||q_0||^2_{L^2(0,L)} \]
and that  $q$ satisfies \eqref{CP1} with $\xi=  \bar{u} +\frac{z}{2}$ and $q(\cdot,T)  =0$.
Thus $q\in T(  z)  $ and  $T(  z)  $ is
closed. Now, let us check (3). To prove that $T$ is upper-semicontinuous, consider
any closed subset $A$ of $F$ and any sequence $\left\{  z^{k}\right\}
_{k\in\mathbb{N}}$ in $B$ such that%
\begin{equation}
z^{k}\in T^{-1}(  A) , \quad \forall k\ge 0  \label{ncn6}%
\end{equation}
and
\begin{equation}
z^{k}\rightarrow z\ \text{ in }\ F, \label{ncn5}%
\end{equation}
for some $z\in B$. We aim to prove that $z\in T^{-1}(  A)  $. By
(\ref{ncn6}), we can pick a sequence $\left\{  q^{k}\right\}  _{k\in\mathbb{N}}$
in $B$ with $q^k\in T(z^k)\cap  A$  for all $k$, and a sequence $\left\{  v^{k}\right\}  _{k\in\mathbb{N}}$ in
$L^{2}(  (  0,T)  \times\omega)  $ such that%
\begin{equation}
\left\{
\begin{array}
[c]{lll}%
q_{t}^{k}+ q_x^k +  (  (  \bar{u}+\dfrac{z^{k}}{2})  q^{k})_{x}+q_{xxx}^{k}=1_{\omega}v^{k}(  t,x)  &  & \text{in }(
0,L)  \times(  0,T)  \text{,}\\
q^{k}_{xx}(0,t)  =q^{k}_x(L,t)  =q_{xx}^{k}(L,t)  =0 &  & \text{in }(  0,T)  \text{,}\\
q^{k}(x,0)  =q_{0}(  x)  &  & \text{in }( 0,L)  ,
\end{array}
\right.  \label{ncn7}%
\end{equation}%
\begin{equation}
q^{k}(x,T)  =0, \qquad \text{ in } (0,L) \label{ncn8}%
\end{equation}
and%
\begin{equation}
\left\Vert v^{k}\right\Vert ^2_{L^2(  (  0,T)  \times
\omega)  }\leq C_*\left\Vert q_{0}\right\Vert _{L^2(  0,L)} ^2. \label{ncn9}%
\end{equation}
From (\ref{ncn9}) and the fact that $z^{k}$, $q^{k}\in B$, extracting
subsequences if needed, we may assume that as $k\rightarrow\infty$,%
\[%
\begin{array}
[c]{lll}%
v^{k}\rightarrow v&  & \text{in }L^{2}(  (  0,T)
\times\omega)  \text{ weakly,}\\
q^{k}\rightarrow q &  & \text{in }L^{2}(  0,T;H^{1}( 0,L)  )  \cap H^{1}(  0,T;H^{-2}(  0,L)  )
\text{ weakly,}\\
q^{k}\rightarrow q & & \text{in } C^0([0,T],H^{-1}(0,L)) \text{ strongly},\\
q^{k}\rightarrow q &  & \text{in }F\text{ strongly,}\\
z^{k}\rightarrow z &  & \text{in }F\text{ strongly,}\\
\end{array}
\]
where $v\in L^2((0,T)\times \omega )$ and $q\in B$.
Again, $q(x,0)=q_0(x)$ and $q(x,T)=0$.  We also see that \eqref{CP2} and \eqref{bound} are satisfied. It remains to check that%
\begin{equation}
q_{t} +q_x +(  (  \bar{u} +\frac{z}{2})  q)  _{x}+q_{xxx}=1_{\omega } v (  t,x)  \text{.} \label{ncn10}%
\end{equation}
Observe that the only nontrivial convergence in (\ref{ncn7}) is those of the nonlinear term
$(  z^{k}q^{k})  _{x}$. Note first that
\[
||z^kq^k||_{L^2(0,T,L^2(0,L))} \le ||z^k || _{L^\infty (0,T,L^2(0,L))} ||q^k||_{L^2(0,T,L^\infty(0,L))} \le C,
\]
so that, extracting a subsequence, one can assume that $z^kq^k\rightarrow f$ weakly in $L^2((0,T)\times (0,L))$.
To prove that $f=zq$, it is sufficient to observe that
for any $\varphi\in {\mathcal D} (Q)$,
\[
\int_0^T\!\!\!\int_0^L z^kq^k\varphi dxdt\to \int_0^T\!\!\!\int_0^L z q\varphi dxdt,
\]
for $z^k\to z$ and $q^k\varphi \to q\varphi$ in $F$.
Thus%
\[%
\begin{array}
[c]{lll}%
z^{k}q^{k}\rightarrow zq &  & \text{in }L^{2}(  (  0,T)
\times(  0,L)  )  \text{ weakly.}%
\end{array}
\]
It follows that $(  z^{k}q^{k})  _{x}\rightarrow( zq)  _{x}$ in $\mathcal{D}^{\prime}(  (  0,T)
\times(  0,L)  )  $. Therefore, (\ref{ncn10}) holds and
$q\in T(  z)  $. On the other hand, $q\in A$, since $q^{k}\rightarrow q$ in $F$ and $A$ is closed. We conclude that $z\in T^{-1}(  A)  $,
and hence $T^{-1}(  A)  $ is closed.

Thus, follows from Theorem \ref{fixed} that there exists
$q\in  B$ with $q\in T(  q )$, i.e. we have found a control
$h_1\in L^{2}(0,T) $ such that the solution of
(\ref{ncn3}) satisfies $q(\cdot,T)  =0$ in $(  0,L)$. The proof of Theorem \ref{null} is finished.
\end{proof}

Finally we consider the boundary control system
\begin{equation}
\left\{\begin{array}[c]{lll}%
u_t+u_x+uu_x+u_{xxx}=0  &  & \text{ in } (0,L)\times(0,T),\\
u_{xx}(0,t)=h_1(t),\text{ }u_x(L,t)=h_2(t),\text{ }u_{xx}(L,t)=h_3(t)  &  & \text{ in }(0,T),\\
u(x,0)=u_0(x)  & & \text{ in }(0,L),
\end{array}
\right.  \label{ex-1}
\end{equation}
with all three control inputs being used and present the proofs of Theorem \ref{th6_int} and Theorem \ref{th6_int_z}.

\begin{proof}[{\bf Proof of Theorem \ref{th6_int}}:]
Consider the the initial value control of the KdV equation posed the the whole line $\mathbb{R}$:
\begin{equation}\label{ini-1}
w_t +w_x +ww_x +w_{xxx}=0, \quad w(x,0)= g(x)  \qquad x\in \mathbb{R}, \quad t\in  (0,T),
\end{equation}
where the initial value $g$ is considered as a control input.  By \cite[Theorem 1.2]{zhang2}  there exists a $\delta  >0$ such that, for $s\geq0$, if $u_0 , \ u_T \in H^s (0,L)$ satisfying
\[ \|u_0(\cdot)  -y(\cdot ,0)\|_{H^s (0,L)} +\|u_T(\cdot)-y(\cdot, T)\|_{H^s (0,L)} \leq \delta, \]
then one can choose $g\in H^s (\mathbb{R})$ so that (\ref{ini-1}) admits a solution
\[ w\in C([0,T]; H^s (\mathbb{R}) \cap L^2 (0,T; H^{s+1} (\mathbb{R}))\]
with
\[ w(x,0) =u_0(x), \qquad w(x,T)=u_T(x) \qquad for  \  x\in (0,L ).\]
Moreover, the solution $ w$ possesses the sharp Kato smoothing properties with
\[ h_1 (t):= w_{xx} (0,t)\in H^{\frac{s-1}{3}} (0,T), \qquad h_2 (t):= w_x (L,t)\in H^{\frac{s}{3}} (0,T), \qquad h_3 := w_{xx} (L,t) \in H^{\frac{s-1}{3}} (0,T) .\]
Thus  with such chosen control inputs $h_j, j=1,2,3$, $$u(x,t) := w(x,t), \quad \text{for} \quad x\in (0,L), \ t\in (0,T) $$
solves system (\ref{ex-1}) and satisfies
\[ u(x,0)=u_0 (x), \qquad u(x,T)= u_T (x), \qquad \text{for} \quad x\in (0,L).\]
We have, thus,  completed the proof of Theorem \ref{th6_int}.
\end{proof}
\noindent
\begin{proof}[{\bf Proof of Theorem \ref{th6_int_z}:}]
Without loss of generality, we assume $u_T=0$.  Consider feedback control system of the KdV equation posed on the interval $(-L, L)$:
\begin{equation}\label{ex-2}
\begin{cases}
v_t +v_x +vv_x +v_{xxx} +b(x) v=0  \quad v(x,0)=\tilde{u}_0 (x) & \quad x\in (-L,L), \ t\in (0,T),\\
v(-L,t)= 0, \quad v (L,t)=0, \qquad v_x (L,t) =0 & \quad t\in (0,T),
\end{cases}
\end{equation}
where
\[ b(x)= \left \{ \begin{array}{ll} 1& x\in (-\frac43 L, -\frac12 L),\\ 0 & \text{otherwise} \end{array}\right. \]
and
\[ \tilde{u}_0(x) =  \left \{ \begin{array}{ll}u_0 (x)& x\in (0,  L),\\ 0 & \text{otherwise}. \end{array}\right. \]
It follows from \cite{zhang2} that,  for given $u_0\in L^2 (0,L)$, we have $$v\in C_b (\mathbb{R}^+, L^2 (-L,L))\cap L^2_{loc} (\mathbb{R}^+; H^1 (-L,L))$$  and, there exists a $\nu >0$ such that
\[ \| v(\cdot, t)\|_{L^2 (-L,L)} \leq C\|u_0\|_{L^2 (0,L)} e^{-\nu t}, \quad \ \text{for any} \ t\geq 0 .\]
For given $\delta >0$,  choose $t^*$ large enough such that
\[ \| v(\cdot, t^*) \| _{L^2 (-L,L)} \leq C\|u_0\|_{L^2 (0,L)} e^{-\nu t^*} \leq \delta. \]
Then, again by \cite[Theorem 1.2]{zhang2}, one can find a control $h\in L^2 (t^*, t^*+1)$ such that  (\ref{1}) admits a solution $z\in C([t^*, t^* +1]; L^2 (0,L)\cap L^2 (t^*, t^* +1; H^1 (0,L))$  satisfying
\[ z(x,t^*)= v (x, t^*), \qquad z (x, t^*+1) = 0 \qquad x\in (0,L) .\]
Let $T=t^* +1$,
\[ h_1 (t) := \left \{ \begin{array}{ll} v_{xx} (0, t) & \quad t\in (0,t^*),\\ 0& \quad t\in (t^*, T), \end{array} \right. \qquad  h_2 (t) := \left \{ \begin{array}{ll} v_{x} (L, t) & \quad t\in (0,t^*),\\ h(t)& \quad t\in (t^*, T) \end{array} \right.\]
and
\[ \qquad  h_3 (t) := \left \{ \begin{array}{ll} v_{xx} (L, t) & \quad t\in (0,t^*),\\ 0& \quad t\in (t^*, T). \end{array} \right. \]
Note that as the solutions of (\ref{ex-2}) possess the sharp Kato smoothing properties  we have
\[ h_1 \in H^{-\frac13} (0,T), \quad h_2 \in L^2 (0,T), \quad h_3 \in H^{-\frac13} (0,T).\]
Thus if we let
\[ u(x,t) := \left \{ \begin{array}{ll} v(x,t) & \quad x\in (0,L), \quad t\in (0,t^*), \\  z(x,t) &\quad x\in (0,L), \quad t\in (t^*,T) , \end{array} \right.\]
then $u\in C([0,T]; L^2 (0,L)\cap L^2 (0,T;H^1 (0,L))$ solves (\ref{ex-1})  and satisfies
\[ u(x,0)= u_0 (x), \quad u(x, T) =0, \qquad \text{for} \ x\in (0,L).\]
Thus, the proof of Theorem \ref{th6_int_z} is achived.
\end{proof}

\noindent\textbf{Acknowledgments:}

Roberto A. Capistrano--Filho was supported by CNPq (Brazilian Technology Ministry), Project PDE, grant 229204/2013-9 and partially supported by CAPES (Brazilian Education Ministry).
Bing-Yu Zhang was supported by a grant from the Simons Foundation (201615), NSF of China (11231007) and PCSIRT (Chinese Education Ministry) under grant IRT 1273.

\end{document}